\newcommand{\bx}{\mathbf{x}}
\newcommand{\bp}{\mathbf{p}}
\newcommand{\bq}{\mathbf{q}}
\newcommand{\bn}{\mathbf{n}}
\newcommand{\ADE}{\mathrm{ADE}}
\newcommand{\CADE}{\mathrm{CADE}}
\DeclareMathOperator*{\argmin}{arg\,min}
\newcommand{\cL}{\mathcal{L}}
\newcommand{\cH}{\mathcal{H}}
\newtheorem{theorem}{Theorem}[section]
\newtheorem{remark}{Remark}[section]
\newtheorem{corollary}{Corollary}[section]
\begin{document}

\title{Fast operator splitting methods for obstacle problems}
\author{Hao Liu \thanks{Department of Mathematics, Hong Kong Baptist University, Kowloon Tong, Hong Kong. (Email: haoliu@hkbu.edu.hk).},
Dong Wang \thanks{School of Science and Engineering, The Chinese University of Hong Kong, Shenzhen. 
	(Email: wangdong@cuhk.edu.cn). The work of Dong Wang is supported by Guangdong Provincial Key Laboratory of Big Data Computing, The Chinese University of Hong Kong, Shenzhen. D. Wang acknowledges support from National Natural Science Foundation of China grant 12101524 and the University Development Fund from The Chinese University of Hong Kong, Shenzhen (UDF01001803).}}

\date{}

\maketitle

\begin{abstract}
The obstacle problem is a class of free boundary problems which finds applications in many disciplines such as porous media, financial mathematics and optimal control.
In this paper, we propose two operator--splitting methods to solve the linear and nonlinear obstacle problems.
The proposed methods have three ingredients: (i) Utilize an indicator function to formularize the constrained problem as an unconstrained problem, and associate it to an initial value problem. The obstacle problem is then converted to solving for the steady state solution of an initial value problem. (ii) An operator--splitting strategy to time discretize the initial value problem. After splitting,  a heat equation with obstacles is solved and other subproblems either have explicit solutions or can be solved efficiently. (iii) A new constrained alternating direction explicit method, a fully explicit method, to solve heat equations with obstacles. The proposed methods are easy to implement, do not require to solve any linear systems and are more efficient than existing numerical methods while keeping similar accuracy. Extensions of the proposed methods to related free boundary problems are also discussed.
\end{abstract}

\section{Introduction}
The obstacle problem is a free boundary problem which studies the equilibrium state of an elastic membrane constrained over an obstacle with fixed boundary conditions. It has broad applications in porous media, financial mathematics, elasto-plasticity, and optimal control \cite{attouch2014variational,caffarelli1998obstacle,friedman1988free,rodrigues1987obstacle}. 
The classical obstacle problem is motivated by studying the equilibrium state of an elastic membrane on the top of an obstacle. Given an obstacle $\Psi$ and a boundary condition $g$,  the equilibrium state is solved by minimizing the surface area: 
\begin{align}
	\min_{u} \int_{\Omega} \left(\sqrt{|\nabla u|^2+1}\right) d\bx, \ u\geq \Psi \mbox{ in } \Omega, \ u=g \mbox{ on } \partial\Omega,
	\label{eq.obs}
\end{align}
where $u$ represents the membrane, $\Omega$ denotes the computational domain, and $\partial \Omega$ denotes the boundary of $\Omega$. A well-known variation of (\ref{eq.obs}) is its linearization
\begin{align}
	\min_{u} \int_{\Omega} |\nabla u|^2 d\bx, \ u\geq \Psi \mbox{ in } \Omega, \ u=g \mbox{ on } \partial\Omega,
	\label{eq.obs.linear}
\end{align}
in which the functional is called the Dirichlet energy of $u$. Problem  (\ref{eq.obs}) and (\ref{eq.obs.linear}) are referred to as nonlinear obstacle problem and linear obstacle problem, respectively. For both problems, the contacting set on which $u=\Psi$ is a free boundary problem \cite{friedman1988free,kinderlehrer2000introduction}. Theories on the obstacle problem and the free boundary problem,  such as the regularity of the free boundary, have been studied for decades \cite{caffarelli1998obstacle, caffarelli1976regularity, lindqvist1988regularity, petrosyan2012regularity,athanasopoulos2008structure}. 
The regularity of the solution and free boundary of fractional order of obstacle problems are studied in \cite{caffarelli2008regularity, silvestre2007regularity}. We refer interested readers to \cite{ros2018obstacle} for an overview of classical results on obstacle and free boundary problems.

The broad applications of obstacle problems make efficient numerical solvers in high demand. A lot of efforts have been devoted into this topic. In literature, one class of numerical methods directly tackles the Euler-Lagrange equation of (\ref{eq.obs}) or (\ref{eq.obs.linear}), a variational inequality. An early work is \cite{lions1979splitting} in which a splitting method was proposed. Finite element based methods are proposed in \cite{glowinski2008lectures,hoppe1994adaptive, chen2000residual, kornhuber1994monotone, kornhuber1996monotone,johnson1992adaptive}. Among them, an over--relaxation method with projection was proposed in \cite{glowinski2008lectures}. A multilevel preconditioner was used in \cite{hoppe1994adaptive}. The authors of  \cite{kornhuber1994monotone} designed monotone multigrid methods and \cite{chen2000residual} used positivity preserving interpolation. An adaptive method with a posteriori error estimate was proposed in \cite{johnson1992adaptive}. In \cite{hoppe1987multigrid}, the author proposed an iterated scheme in which each step is solved by a multigrid algorithm. Recently, a simple semi-implicit method was proposed in \cite{liu2020simple} to solve (\ref{eq.obs}) and (\ref{eq.obs.linear}). The authors decomposed the Euler-Lagrange equation into linear and nonlinear parts, which are treated implicitly and explicity in the numerical scheme, respectively. The proposed schemes can be easily extended to solve fractional obstacle problems and partial differential equations with obstacle constratints. The domain decomposition methods for solving variational inequalities are explored in \cite{badea2004convergence,tai2003rate}.

Another line of research relaxes the constraint by a penalty term. A $L^2$ penalty relaxation was proposed in \cite{scholz1984numerical}, in which convergence rate with respect to the penalty weight was studied. In \cite{french2001pointwise}, the authors proposed a $L^2$ penalized finite element method to solve (\ref{eq.obs.linear}). The constraint is incorporated with the objective functional as a Lagrange multiplier in \cite{hintermuller2011obstacle}. Then a smoothed version of the optimization problem is solved numerically.
In \cite{tran20151}, the authors proposed an $L^1$--like penalty to relax the constraint of the linear problem (\ref{eq.obs.linear}). The authors proved that under appropriate conditions, the relaxed problem and the linear problem (\ref{eq.obs.linear}) have the same solution. A split Bregmann algorithm was used to solve the relaxed optimization problem.  The penalty proposed in \cite{tran20151} was then generalized to non--smooth variational problems and degenerate elliptic equations in \cite{schaeffer2018penalty}. The authors of \cite{zosso2017efficient} proposed primal--dual methods to solve the constrained problem (\ref{eq.obs}) and (\ref{eq.obs.linear}), and their relaxed form with the $L^1$ penalty introduced in \cite{tran20151}. A modified level set method was proposed in \cite{majava2004level} to solve problem (\ref{eq.obs.linear}). The authors first formularize the problem into a nonsmooth optimization problem. Then a proximal bundle method was used to solve the nonsmooth problem and a gradient method was proposed to solve the regularized problem. 

In this paper, we start with the optimization problem (\ref{eq.obs}) and (\ref{eq.obs.linear}), and incorporate the operator splitting method with the alternating direction explicit (ADE) method to design fast solvers. 
The operator splitting method is a large class of methods which decomposes a complicated problem into several easy--to--solve subproblems. Such a method is powerful in solving complicated optimization problems and differential equations. It has been successfully applied in many fields, such as image processing \cite{deng2019new,liu2021color,he2020curvature}, inverse problem \cite{glowinski2015penalization}, numerical PDEs \cite{glowinski2019finite, liu2019finite} and fluid-structure interactions \cite{bukavc2013fluid}. We refer readers to \cite{glowinski2017splitting,glowinski2016some} for a complete discussion on operator splitting methods. Many efficient and famous algorithms can be referred to operator splitting into the class of operator splitting methods, such as threshold dynamics method \cite{Ma_2021, wang2019iterative,Wang_2021,Wang_2017}, Strang operator splitting \cite{li2022operator,li2022stability,li2022stability2}, and so on. A special operator splitting method is the ADE method. ADE was first introduced and studied in \cite{barakat1966solution,larkin1964some} for the numerical solution of heat equations, and was  extended to solve nonlinear evolution equations in \cite{leung2005alternating}. The aforementioned works only considered static Dirichlet boundary conditions. Recently, ADE for time evolution equations with the time-dependent Dirichlet boundary condition and the Neumann boundary condition was studied in \cite{liu2019alternating}. Under appropriate conditions, it has been shown that ADE is second order accurate in time and unconditionally stable for linear time evolution PDEs, similar to the properties of the alternating direction implicit method (ADI). The benefits of ADE over ADI is that ADE is a fully explicit scheme. Therefore no linear system needs to be solved.

In this paper, we propose operator splitting methods for problem (\ref{eq.obs}) and (\ref{eq.obs.linear}). By introducing an indicator function, we first rewrite the constrained problem into a nonsmooth unconstrained optimization problem. Then the  problem can be formulated into solving an initial value problem whose steady state solution is the minimizer of the optimization problem. We use the operator--splitting method to time discretize the initial value problem and decompose it into several subproblems. After time--discretization, one subproblem is equivalent to solving a constrained heat equation, for which we modify ADE and propose a constrained ADE method. Other subproblems either have explicit solvers or can be solved efficiently. Extensions of our method to solve other free boundary problems, such as the double obstacle problem and the two-phase membrane problem, are also discussed. Our methods are easy to implement and are more efficient than many existing methods.

We structure this paper as follows: In Section \ref{sec.reformulation}, we reformularize both the linear and nonlinear obstacle problems as unconstrained optimization problems. We present our operator--splitting methods in Section \ref{sec.operator} and the proposed constrained alternating direction explicit method in Section \ref{sec.numericalDis}. Extensions of the proposed methods to other free boundary problems are discussed in Section \ref{sec.extension}. We demonstrated the effectiveness of the proposed methods in Section \ref{sec.experiments} and conclude this paper in Section \ref{sec.conclusion}.

%

 \section{Reformulations of obstacle problems}
 \label{sec.reformulation}
 We present in this section our reformulations of the linear obstacle problem and nonlinear obstacle problem. The idea is motivated by \cite{deng2019new, liu2021color} and both problems will be reformulated to unconstrained optimization problems by utilizing indicator functions. 
 \subsection{Preliminary}
 We define some quantities that will be used in the subsequent sections. 
 
Let $\Omega\subset \mathbb{R}^D$ be the computational domain, and $\psi$ be the obstacle defined on it. Denote the functional in the linear problem (\ref{eq.obs.linear}) and the nonlinear problem (\ref{eq.obs}) by $E_1$ and $E_2$:
 \begin{align}
 	E_1(u)=\int_{\Omega} \left(|\nabla u|^2-fu \right)d\bx, \quad 	E_2(u)=\int_{\Omega} \left(\sqrt{|\nabla u|^2+1}-fu\right) d\bx.
 	\label{eq.obs.energy}
 \end{align}
One can show that if $u$ is a minimizer of $E_1$ or $E_2$, the optimality condition for $u$ reads as
\begin{align}
	\min\{-\nabla^2 u-f,u-\psi\}=0.
	\label{eq.obs1.pde}
\end{align}
or 
\begin{align}
	\min\left\{-\nabla\cdot \frac{\nabla u}{\sqrt{|\nabla u|^2+1}} -f,u-\psi\right\}=0.
\end{align}

In the formulation, we relax the constraint $u\geq \psi$ by introducing an indicator function. We first define the set on which the constraint is satisfied:
$$
\Sigma_{\psi}=\{u\in \cL^2(\Omega):u\geq \psi\},
$$
where $\cL^2(\Omega)$ denotes the $L^2$ space over $\Omega$.
Define the indicator function $I_{\Sigma_{\psi}}(u)$ as:
$$
I_{\Sigma_{\psi}}(u)=\begin{cases}
	0, & \mbox{if } u\in \Sigma_{\psi} \\
	\infty, & \mbox{otherwise}.
\end{cases}
$$
For any function $u$, we have $I_{\Sigma_{\psi}}(u)=0$ if $u\geq \psi$, and $I_{\Sigma_{\psi}}(u)=\infty$ otherwise. Incorporate the functional $I_{\Sigma_{\psi}}(u)$ into $E_1$ or $E_2$, the constraint will be enforced by minimizing the new functional.
 \subsection{Reformulation of the linear obstacle problem }
Denote the functional in $E_1$ by
$$
J_1(u)=\int_{\Omega} \left(|\nabla u|^2-fu\right) d\bx.
$$
We reformulate (\ref{eq.obs}) as
 \begin{align}
   \min_{v\in \cH_g^1(\Omega)} \left[J_1(v)+I_{\Sigma_{\psi}}(v)\right],
    \label{eq.LinearObs.op}
 \end{align}
where $\cH_g^1(\Omega)$ is the Sobolev space defined as
$$
\cH^1_g(\Omega)=\left\{u: u\in \cH^1(\Omega), u=g \mbox{ on } \partial\Omega\right\}$$
with  
$$\cH^1(\Omega)=\left\{u: u\in \cL^2(\Omega), \nabla u\in (\cL^2(\Omega))^D\right\}.
$$
As the constraint is enforced by minimizing the second term, the functional in (\ref{eq.LinearObs.op}) has the same minimizer as (\ref{eq.obs}).
Problem (\ref{eq.LinearObs.op}) can be solved easily by operator-splitting methods since each term is a simple function of $u$, as will be discussed in Section \ref{sec.operator}.

\subsection{Reformulation of the nonlinear problem}
The nonlinear problem (\ref{eq.obs}) is more complicated than (\ref{eq.obs.linear}). The nonlinearity makes it difficult to formulate (\ref{eq.obs}) as a sum of simple functions of $u$, like the form of (\ref{eq.LinearObs.op}). To decouple the nonlinearity, we introduce a vector valued variable $\bp$ and consider the following constrained problem
 \begin{align}
	\begin{cases}
		(u,\bp)=\argmin\limits_{u,\bp} \displaystyle \int_{\Omega} \left(\sqrt{|\bp|^2+1}-fu\right) d\bx,\\
		\bp=\nabla u,\\
		u\geq \psi.
	\end{cases}
	\label{eq.NonLinearObs.enerngy1}
\end{align}

We add the indicator function $I_{\Sigma_{\psi}}(u)$ to the functional to drop the constraint $u\geq \psi$, and use the penalty term $|\bp-\nabla u|_2^2$ to relax the constraint $\bp=\nabla u$. The resulting problem becomes
\begin{align}
	(u,\bp)=\argmin\limits_{v\in \cH_g^1(\Omega),\bp\in \left(\cL^2(\Omega)\right)^D} J_2(v,\bp)+J_3(v,\bp) +\int_\Omega I_{ \Sigma_{\psi}}(v) \ d\bx,
	\label{eq.NonLinearObs.op}
\end{align}
where 
$$
J_2(v,\bp)=\int_{\Omega}\left(\sqrt{|\bp|^2+1}-fv\right) d\bx,\ J_3(v,\bp)=\frac{\alpha}{2}\int_{\Omega} |\bp-\nabla v|^2 d\bx
$$
and $\alpha>0$ is a weight parameter penalizing the mismatch between $u$ and $\bp$. 

\begin{remark}
	In the formulation (\ref{eq.NonLinearObs.op}), we adopt the penalty $|\bp-\nabla u|_2^2$ so that by using operator splitting method to solve (\ref{eq.NonLinearObs.op}), one substep is equivalent to solving a constrained heat equation. Such a problem can be solved robustly and efficiently by the proposed constrained ADE method, see Section \ref{sec.op.nonlinear} and \ref{sec.CADE} for details. 
\end{remark}
 
 \section{Operator-splitting methods}\label{sec.operator}
 \subsection{An operator splitting method for the linear problem}
 The Euler--Lagrange equation of (\ref{eq.LinearObs.op}) reads as
 \begin{align}
 	D_u J_1+\partial_u  I_{\Sigma_{\psi}}\ni 0,
 	\label{eq.LinearObs.EL}
 \end{align}
where $D_u$ (resp. $\partial_u$) denotes the differential (resp. sub-differential) of a differentiable (resp. non-differentiable) function with respect to $u$. We associate the optimality condition (\ref{eq.LinearObs.EL}) with the initial value problem (dynamical flow)
 \begin{align}
 	\frac{\partial u}{\partial t}+D_u J_1+\partial_u  I_{\Sigma_{\psi}}\ni 0,
 	\label{eq.LinearObs.ivp}
 \end{align}
Note that the steady state solution of (\ref{eq.LinearObs.ivp}) solves (\ref{eq.LinearObs.op}).
To time-discretize (\ref{eq.LinearObs.ivp}), we adopt the Lie scheme (see \cite{glowinski2017splitting,glowinski2016some} and the reference therein). Let $t^n=n\tau$ for some time step $\Delta t$. Denote the initial condition by $u_0$, we update $u^{n+1}$ by the following two steps:\\
 \emph{Initialization}:
\begin{align}
 u^0=u_0.
\end{align}
 \emph{Fractional step 1}: Solve
\begin{align}
\begin{cases}
 \frac{\partial u}{\partial t}+D_u J_1(u)=0 \mbox{ on } \Omega\times  (t^n,t^{n+1}),\\
 u(t^n)=u^n,
 \end{cases}
 \label{eq.LinearObs.op.1}
 \end{align}
 and set $u^{n+1/2}=u(t^{n+1})$.

 \noindent\emph{Fractional step 2}: Solve
 \begin{align}
 \begin{cases}
   \frac{\partial u}{\partial t}+\partial_u I_{\Sigma_{\psi}}(u)\ni 0 \mbox{ on } \Omega\times (t^n,t^{n+1}),\\
   u(t^n)=u^{n+1/2},
 \end{cases}
  \label{eq.LinearObs.op.2}
 \end{align}
 and set $u^{n+1}=u(t^{n+1})$.
 
 The scheme (\ref{eq.LinearObs.op.1})--(\ref{eq.LinearObs.op.2}) is only semi-constructive since we still need to solve two subproblems. Recall that $D_u J(u)=-\nabla^2 u-f$. Solving (\ref{eq.LinearObs.op.1}) is equivalent to solving the heat equation
 \begin{align}
 	u_t-\nabla^2 u=f,
 \label{eq.lap}
 \end{align}
 for which we use the alternating direction explicit (ADE) method. We propose to time discretize (\ref{eq.LinearObs.op.1})--(\ref{eq.LinearObs.op.2}) by an  explicit scheme.
 For $n>0$, we update $u^n\rightarrow u^{n+1/2}\rightarrow u^{n+1}$ as follows:
 \begin{align}
 	\begin{cases}
 		u^{n+1/2}=\ADE(\nabla^2,u^{n+1/2},u^n,f,\Delta t),\\
 		u^{n+1}=\max\{u^{n+1/2},\psi\},
 	\end{cases}
 	\label{eq.LinearObs.ADE}
 \end{align}
 where $\ADE$ denote the ADE solver, which is introduced in Section \ref{sec.ADE}.
  
 In (\ref{eq.LinearObs.ADE}), the second step is only a thresholding step. Since ADE is a fully explicit scheme, we can easily incorporate it with the thresholding. We will propose a constrained ADE (CADE) scheme to solve the two steps in (\ref{eq.LinearObs.ADE}) together. Then the updating formula for $u^{n+1}$ reduces to a one-step scheme
  \begin{align}
 		u^{n+1}=\CADE(\nabla^2,u^{n+1},u^n,f,\psi,\Delta t),
 	\label{eq.LinearObs.CADE}
 \end{align}
 where $\CADE$ represents the constrained ADE method which will be discussed in Section \ref{sec.CADE}.
 
  \subsection{An operator-splitting method for the nonlinear problem }
  \label{sec.op.nonlinear}
%
The optimality condition of (\ref{eq.NonLinearObs.op}) is 
 \begin{align}
	\begin{cases}
		D_{\bp} J_2(\bp,u)+D_{\bp} J_3(\bp,u)=0,\\
		D_{u} J_2(\bp,u)+D_{u} J_3(\bp,u)+\partial_u I_{\Sigma_{\psi}}\ni 0.
	\end{cases}
\label{eq.NonLinearObs.op1}
\end{align}
To solve (\ref{eq.NonLinearObs.op1}), we associate it with the following initial value problem
 \begin{align}
 	\begin{cases}
 		\frac{\partial \bp}{\partial t}+D_{\bp} J_2(\bp,u)+D_{\bp} J_3(\bp,u)=0,\\
 		\gamma\frac{\partial u}{\partial t}+D_{u} J_2(\bp,u)+D_{u} J_3(\bp,u)+\partial_u I_{\Sigma_{\psi}}\ni 0,
 	\end{cases}
 	\label{eq.NonLinearObs.ivp}
 \end{align}
 where $\gamma(>0)$ is a parameter controlling the evolution speed of $u$.
 
 It's straightforward to see that if $(u,\bp)$ is the steady state of (\ref{eq.NonLinearObs.ivp}), then $u$ is a minimizer of (\ref{eq.NonLinearObs.op}), and an approximation of the minimizer of  (\ref{eq.obs}).
 Let $(u_0,\bp_0)$ be an initial condition.
 We use the following Lie scheme to solve (\ref{eq.NonLinearObs.ivp}) for the steady state:\\
 \emph{Initialization}:
 \begin{align}
 	(u^0,\bp^0)=(u_0,\bp_0).
 \end{align}
 \emph{Fractional Step 1}: Solve
 \begin{align}
 	\begin{cases}
 		\frac{\partial \bp}{\partial t}+D_{\bp} J_2(u,\bp)=\mathbf{0} \\
 		\frac{\partial u}{\partial t}=0\\
 		(u(t^n),\bp(t^n))=(u^n,\bp^n)
 	\end{cases}
 	\mbox{ on } \Omega\times (t^n,t^{n+1}),
 	\label{eq.NonLinearObs.op.1}
 \end{align}
 and set $u^{n+1/3}=u(t^{n+1}),\bp^{n+1/3}=\bp(t^{n+1})$.\\
 \emph{Fractional Step 2}: Solve
 \begin{align}
 	\begin{cases}
 		\frac{\partial \bp}{\partial t}=\mathbf{0} \\
 		\gamma\frac{\partial u}{\partial t}+D_u J_2(u,\bp)+D_u J_3(u,\bp)+\partial_u I_{ \Sigma_{\psi}}(u)=0,\\
 		(u(t^n),\bp(t^n))=(u^{n+1/3},\bp^{n+1/3})
 	\end{cases}
 	\mbox{ on } \Omega\times (t^n,t^{n+1}),
 	\label{eq.NonLinearObs.op.2}
 \end{align}
 and set $u^{n+2/3}=u(t^{n+1}),\bp^{n+2/3}=\bp(t^{n+1})$.\\
 \emph{Fractional Step 3}: Solve
 \begin{align}
 	\begin{cases}
 		\frac{\partial \bp}{\partial t}+D_{\bp} J_3(u,\bp)=\mathbf{0} \\
 		\frac{\partial u}{\partial t}=0\\
 		(u(t^n),\bp(t^n))=(u^{n+2/3},\bp^{n+2/3})
 	\end{cases}
 	\mbox{ on } \Omega\times (t^n,t^{n+1}),
 	\label{eq.NonLinearObs.op.3}
 \end{align}
 and set $u^{n+1}=u(t^{n+1}),\bp^{n+1}=\bp(t^{n+1})$. 
 
 The scheme (\ref{eq.NonLinearObs.op.1})--(\ref{eq.NonLinearObs.op.3}) are only semi-constructive. As for \eqref{eq.NonLinearObs.op.3}, one can write
 \begin{align}
 	\begin{cases}
 		\frac{\partial \bp}{\partial t}+\alpha(\bp-\nabla u)=\mathbf{0} \\
 		\frac{\partial u}{\partial t}=0\\
 		(u(t^n),\bp(t^n))=(u^{n+2/3},\bp^{n+2/3})
 	\end{cases}
 	\mbox{ on } \Omega\times (t^n,t^{n+1}),
 	\label{eq.NonLinearObs.op.3.1}
 \end{align}
 whose solution is given by
 \begin{align}
 	\begin{cases}
 		\bp(t^{n+1})=\exp(-\alpha \Delta t)\bp^{n+1/2}+\left(1-\exp(-\alpha \Delta t)\right) \nabla u^{n+1},\\
 		u(t^{n+1})=u^{n+1/2}.
 	\end{cases}
 \end{align}

 For other subproblems, we use the Marchuk-Yanenko scheme \cite{glowinski2017splitting} to time discretize (\ref{eq.NonLinearObs.op.1}), and a semi-implicit scheme to time discretize (\ref{eq.NonLinearObs.op.2}). The updating formulas are given as:\\
 For $n>0$, we update $(u^n,\bp^n)\rightarrow (u^{n+1/2},\bp^{n+1/2}) \rightarrow (u^{n+1},\bp^{n+1})$ as follows:
 \begin{align}
 		&\frac{\bp^{n+1/2}-\bp^n}{\Delta t}+\partial_{\bp} J_2(u^n,\bp^{n+1/2})=\mathbf{0}, \label{eq.NonLinearObs.op.dis.1}\\
 		&\gamma\frac{u^{n+1}-u^n}{\Delta t}+D_u J_2(u^{n+1},\bp^{n+1/2})+D_u J_3(\widetilde{u}^{n+1/2},\bp^{n+1/2})+\partial_u I_{ \Sigma_{\psi}}(u^{n+1})\ni 0, \label{eq.NonLinearObs.op.dis.2}\\
 		&\bp^{n+1}=\exp(-\alpha \Delta t)\bp^{n+1/2}+\left(1-\exp(-\alpha \Delta t)\right) \nabla u^{n+1}, \label{eq.NonLinearObs.op.dis.3}
 \end{align}
where $D_u J_3(\widetilde{u}^{n+1/2},\bp^{n+1/2})$ denotes the computed $D_u J_3$ using $u^n,u^{n+1}$ and $\bp^{n+1/2}$. Such a treatment will lead to a semi-implicit scheme, which is well--suited to be solved by CADE, see Section \ref{sec.NonLinearObs.CADE} for details. In the rest of this section, we discuss numerical solutions to (\ref{eq.NonLinearObs.op.dis.1}) and (\ref{eq.NonLinearObs.op.dis.2}).

 \subsection{On the solution of (\ref{eq.NonLinearObs.op.dis.1})}
 In (\ref{eq.NonLinearObs.op.dis.1}), $\bp^{n+1/2}$ solves
 \begin{align}
 	\bp^{n+1/2}=\argmin_{\bq\in \left(\cL^2(\Omega)\right)^D} \left[ \frac{1}{2}\int_{\Omega} |\bq-\bp^n|^2 d\bx +\Delta t\int_{\Omega} \sqrt{|\bq|^2+1}d\bx \right]
 	\label{eq.NonLinearObs.frac1}
 \end{align}
 By computing the variation of the functional in (\ref{eq.NonLinearObs.frac1}) with respect to $\bq$, $\bp^{n+1/2}$ satisfies
 \begin{align}
 	\bp^{n+1/2}-\bp^n+\Delta t \frac{\bp^{n+1/2}}{\sqrt{1+|\bp^{n+1/2}|^2}}=\mathbf{0}.
 	\label{eq.NonLinear.p.1}
 \end{align}
 We solve (\ref{eq.NonLinear.p.1}) by the fixed point method. Observe that (\ref{eq.NonLinear.p.1}) can be rewritten as
 \begin{align}
 	\left(1+\Delta t \frac{1}{\sqrt{1+|\bp^{n+1/3}|^2}}\right)\bp^{n+1/3}=\bp^n.
 \end{align}

 Set $\bq^0=\bp^n$, we update $\bq^k\rightarrow \bq^{k+1}$ as
 \begin{align}
 	\bq^{k+1}=\left(1+\frac{\Delta t}{\sqrt{1+|\bq^{k}|^2}}\right)^{-1}\bp^n
 	\label{eq.NonLinear.p.2}
 \end{align}
 until $\|\bq^{k+1}-\bq^k\|_{\infty}<\varepsilon_1$ for some small $\varepsilon_1>0$. Denote the converged quantity as $\bq^*$ and set $\bp^{n+1/2}=\bq^*$.
 
 \subsection{On the solution of (\ref{eq.NonLinearObs.op.dis.2})}
 \label{sec.NonLinearObs.CADE}
  In (\ref{eq.NonLinearObs.op.dis.2}), $u^{n+1}$ is the minimizer to
  \begin{align}
  	u^{n+1}=&\argmin_{v\in \cH^1(\Omega)} \bigg[ \frac{\gamma}{2}\int_{\Omega} |v-u^n|^2d\bx- \Delta t \int_{\Omega} fvd\bx  \nonumber\\
  	&\quad +\frac{\alpha}{2} \Delta t\int_{\Omega} |\bp^{n+1/2}-\nabla F(v,u^{n})|^2d\bx + I_{\Sigma_{\psi}}(v)\bigg],
  \end{align}
where $F(u^{n+1},u^n)$ represents $\widetilde{u}^{n+1/2}$. The optimality condition of $u^{n+1}$ is
  \begin{align}
  	\begin{cases}
  		\gamma u^{n+1}-\alpha\Delta t\nabla^2\widetilde{u}^{n+1/2}=\gamma u^n+\Delta tf-\alpha\Delta t\nabla\cdot \bp^{n+1/2},\\
  		u^{n+1}\in \Sigma_{\psi}.
  	\end{cases}
  \label{eq.NonLinearObs.frac2}
  \end{align}
Problem (\ref{eq.NonLinearObs.frac2}) has similar form as (\ref{eq.LinearObs.ivp}) and can be solved by CADE:
\begin{align}
	u^{n+1}=\CADE(\alpha\nabla^2,u^{n+1},u^n,f-\alpha\nabla\cdot \bp^{n+1/2} ,\Psi,\Delta t/\gamma).
	\label{eq.NonLinearObs.CADE}
\end{align}

 \section{Numerical discretization}\label{sec.numericalDis}
 We discuss in this section the numerical discretization of the proposed schemes. Among the three subproblems discussed in Section~\ref{sec.operator}, (\ref{eq.LinearObs.CADE}) and (\ref{eq.NonLinearObs.CADE}) are solved by CADE, (\ref{eq.NonLinear.p.2}) can be solved point-wisely. We will first introduce ADE and then present the proposed CADE. 
 
 \subsection{Alternating direction explicit method}
 \label{sec.ADE}
 ADE is a fully explicit method to solve time-dependent PDEs. In each iteration, ADE carries out separate Gauss--Seidel updates along two directions for one-dimensional problems and along four directions for two-dimensional problems. For linear PDEs under mild conditions, ADE is proved to be unconditionally stable and second order accurate in time \cite{leung2005alternating}. Here we present ADE for one-dimensional heat equation.

Let $\Omega=[0,L]$ be the computational domain discretized by $\{x_i\}_{i=0}^M$ such that $x_i=i\Delta x$ with $\Delta x=L/M$. For any function $v$ defined on $\Omega$, denote $v_i^n=v(t^n,x_i)$ where $t^n=n\Delta t$ for some time step $\Delta t>0$ and further define $v^n=\{v_{i}^n\}_{i=0}^M$. Consider the heat equation
\begin{align}
	\begin{cases}
		u_t-\eta_1\nabla^2 u +\eta_2 u=f \mbox{ in } \Omega,\\
		u(0)=a,\ u(L)=b.
	\end{cases}
\label{eq.heat}
\end{align}

Given the solution $u^n$ at time level $t^n$ and denote $\zeta= \left(1+\frac{\Delta t\eta_1}{\Delta x^2}+\frac{\Delta t\eta_2}{2}\right)^{-1}$, ADE solves for $u^{n+1}$ in the following manner:\\
 \emph{Step 1}: Set 
 \begin{align}
 	u^{n+1,1}=u^{n+1,2}=u^n.
 	\label{eq.ade.1}
 \end{align}
 \emph{Step 2}: For $i=1,2,...,M-1$, compute
 \begin{align}
  u_i^{n+1,1}= \zeta\left( u_i^n+\Delta tf_i^n+\frac{\Delta t\eta_1}{\Delta x^2}\left( u^{n+1,1}_{i-1}-u^n_i+u^n_{i+1}\right)-\frac{\Delta t\eta_2}{2}u^n\right).
 	\label{eq.ade.2.1}
 \end{align}
 For $i=M-1,M-2,...,1$, compute
 \begin{align}
 	u_i^{n+1,2}=\zeta\left( u_i^n+\Delta tf_i^n+\frac{\Delta t\eta_1}{\Delta x^2}\left( u^{n+1,2}_{i+1}-u^n_i+u^n_{i-1}\right)- \frac{\Delta t\eta_2}{2}u^n\right).
 	\label{eq.ade.2.2}
 \end{align}
\emph{Step 3}:
 Compute
 \begin{align}
 	u^{n+1}=\frac{1}{2}\left(u_i^{n+1,1}+u_i^{n+1,2}\right).
 	\label{eq.ade.3}
 \end{align}
 We denote the scheme (\ref{eq.ade.1})--(\ref{eq.ade.3}) by $u^{n+1}=\ADE(\eta_1\nabla^2-\eta_2 I,u^{n+1},u^n,f,\Delta t)$.
 \subsection{Constrained alternating direction explicit method}
 \label{sec.CADE}
 
 (\ref{eq.ade.1})--(\ref{eq.ade.3}) in the ADE method updates the solution on each grid explicitly which gives flexibility to impose constraints to the solution during iterations. Note that problem (\ref{eq.LinearObs.ivp}) and (\ref{eq.NonLinearObs.frac2}) are equivalent to solving the discrete analogue of the following constrained heat equation
 \begin{align}
 	\begin{cases}
 		u_t+\min(-\eta_1\nabla^2u +\eta_2 u-f,u-\psi)=0 \mbox{ in } \Omega,\\
 		u(0)=a,\ u(L)=b.
 	\end{cases}
 \label{eq.cHeat}
 \end{align}
 for some constant $\eta_1>0 $ and $\eta_2\geq 0$.
 Based on ADE, we add a hard thresholding to enforce the constraint at each grid, which leads to CADE:\\
 \emph{Step 1}: Set 
 \begin{align}
 	u^{n+1,1}=u^{n+1,2}=u^n.
 	\label{eq.cade.1}
 \end{align}
 \emph{Step 2}: 
 For $i=1:M-1$, compute
 \begin{align}
 	&u_i^{n+1,1}=\max\bigg\{ \psi_i,\zeta \left( u_i^n+\Delta tf_i^n+\frac{\Delta t\eta_1}{\Delta x^2}\left( u^{n+1,1}_{i-1}-u^n_i+u^n_{i+1}\right)-\frac{\Delta t\eta_2}{2}u^n\right)\bigg\},
 \label{eq.cade.2.1}
 \end{align}
where $\zeta$ is defined under (\ref{eq.heat}).\\
For $i=M-1,1$, compute
 \begin{align}
 	&u_i^{n+1,2}=\max\bigg\{ \psi_i,\zeta \left( u_i^n+\Delta tf_i^n+\frac{\Delta t\eta_1}{\Delta x^2}\left( u^{n+1,2}_{i+1}-u^n_i+u^n_{i-1}\right) - \frac{\Delta t\eta_2}{2}u^n\right)\bigg\}.
\label{eq.cade.2.2}
 \end{align}
 \emph{Step 4}: Compute
 \begin{align}
 	u^{n+1}=\frac{1}{2}\left(u_i^{n+1,1}+u_i^{n+1,2}\right).
 	\label{eq.cade.3}
 \end{align}
 We denote the scheme (\ref{eq.cade.1})--(\ref{eq.cade.3}) as 
 \begin{align}
 	u^{n+1}=\CADE(\eta_1\nabla^2-\eta_2 I,u^{n+1},u^n,f,\psi,\Delta t).
 \end{align}
 
 The following theorem shows that the solution to the discrete analogue of (\ref{eq.cHeat}) is a steady state of scheme (\ref{eq.cade.1})--(\ref{eq.cade.3}):
 \begin{theorem}\label{thm.steady}
 	Let $\eta_1>0$ and $\eta_2\geq0$. For any given obstacle function $\psi$ and source function $f$, let $u^*$ be the solution to the discrete problem
 	\begin{align}
 		\min(-\eta_1\nabla^2_h u+\eta_2 u-f, u-\psi)=0,
 	\end{align}
 	where $\nabla^2_h$ denotes the discretized $\nabla^2$ by the central difference scheme. Then $u^*$ is a steady state of scheme (\ref{eq.cade.1})--(\ref{eq.cade.3}).
 \end{theorem}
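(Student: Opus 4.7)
The plan is to show directly that if we initialize the CADE iteration at $u^n = u^*$, then each of the two Gauss--Seidel sweeps returns $u^{n+1,1} = u^{n+1,2} = u^*$, so that the final average is also $u^*$.

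First I would record the pointwise characterization of the discrete obstacle solution. Writing $R_i := -\eta_1 (\nabla^2_h u^*)_i + \eta_2 u^*_i - f_i$, the equation $\min(R_i, u^*_i - \psi_i) = 0$ is equivalent to the dichotomy: either (a) $u^*_i = \psi_i$ and $R_i \geq 0$, or (b) $u^*_i > \psi_i$ and $R_i = 0$. This is the only place the obstacle condition enters, and it is what will be matched to the two possible branches of the $\max$ in the CADE update.

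Next I would analyze one step of the forward sweep \eqref{eq.cade.2.1} by induction on $i$, assuming $u^{n+1,1}_{i-1} = u^*_{i-1}$ (the base case $i=1$ follows from the Dirichlet boundary value $u^{n+1,1}_0 = a = u^*_0$). Plugging $u^n = u^*$ and $u^{n+1,1}_{i-1} = u^*_{i-1}$ into the ADE expression inside the $\max$, and using the identity $u^*_{i-1} + u^*_{i+1} = \Delta x^2 (\nabla^2_h u^*)_i + 2 u^*_i$, a short algebraic manipulation collapses the bracket to
\begin{align*}
	\zeta\Bigl( u^*_i\bigl(1+\tfrac{\Delta t\eta_1}{\Delta x^2}+\tfrac{\Delta t\eta_2}{2}\bigr) - \Delta t R_i \Bigr) = u^*_i - \zeta\,\Delta t\, R_i.
\end{align*}
This is the key identity. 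In case (b), $R_i = 0$ gives the inner value $u^*_i > \psi_i$ and the $\max$ returns $u^*_i$. In case (a), $R_i \geq 0$ gives an inner value $\leq u^*_i = \psi_i$, and the $\max$ clips it up to $\psi_i = u^*_i$. Either way $u^{n+1,1}_i = u^*_i$, closing the induction.

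The backward sweep \eqref{eq.cade.2.2} is handled by the symmetric argument, using the boundary value at $x_M$ as the base case and inducting from $i=M-1$ downward; the same identity $v = u^*_i - \zeta \Delta t R_i$ appears and the same case analysis yields $u^{n+1,2}_i = u^*_i$. Averaging via \eqref{eq.cade.3} then gives $u^{n+1} = u^*$, which is the desired steady--state property. The main obstacle is simply the clean execution of the algebraic reduction to $v = u^*_i - \zeta \Delta t R_i$; once that identity is in hand, the complementarity structure of $u^*$ matches the two branches of the $\max$ exactly, and the induction on the sweep direction is routine.
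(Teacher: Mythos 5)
Your proposal is correct and follows essentially the same route as the paper's proof: initialize at $u^*$, run an induction along each sweep using the Dirichlet boundary as the base case, reduce the pre-thresholding value to $u^*_i - \zeta\,\Delta t\, R_i$ (the paper states this equivalently as $(\tfrac{1}{\Delta t}+\tfrac{\eta_1}{\Delta x^2}+\tfrac{\eta_2}{2})(\widetilde{u}^{*,1}_i - u^*_i) = -R_i$), and then use the complementarity dichotomy of the discrete obstacle problem to match the two branches of the $\max$. The only cosmetic difference is that you solve explicitly for the inner value rather than for the scaled difference, which is the same identity rearranged.
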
 
 \begin{proof}
 	We prove the theorem by showing that $u^*$ is the steady state of (\ref{eq.cade.2.1}) and (\ref{eq.cade.2.2}).
 	We decompose $\Omega$ into two parts: $\Omega_1$ and $\Omega_2$ such that $u^*=\psi$ on $\Omega_1$ and $-\nabla_h^2 u^*-f=0$ on $\Omega_2$. Setting $u^n=u^*$, denote the output of (\ref{eq.cade.2.1}), (\ref{eq.cade.2.2}) and (\ref{eq.cade.3}) by $u^{*,1}$, $u^{*,2}$ and $u^{*,*}$, respectively. We first show that $u^*$ is the steady state of (\ref{eq.cade.2.1}), i.e., $u^{*,1}=u^*$, by mathematical induction. Since the Dirichlet boundary condition is used, we have $u^{*,1}_0=u^*_0$. Assume $u^{*,1}_{i-1}=u^*_{i-1}$, we are going to show that $u^{*,1}_{i}=u^*_{i}$. 
 	
 	Define 
 	\begin{align}
 		\widetilde{u}^{*,1}_{i}=\left(1+\frac{\Delta t\eta_1}{\Delta x^2} +\frac{\Delta t\eta_2}{2}\right)^{-1}\left( u_i^*+\Delta tf_i^n+\frac{\Delta t\eta_1}{\Delta x^2}\left( u^{*,1}_{i-1}-u^*_i+u^*_{i+1}\right)-\frac{\Delta t\eta_2}{2}u^*\right).
 		\label{eq.linear.proof.1}
 	\end{align} 
 	We can express $u^{*,1}_i$ as 
 	\begin{align*}
 		u^{*,1}_i=\max(\psi_i,\widetilde{u}^{*,1}_{i}).
 	\end{align*}
 	Rearranging (\ref{eq.linear.proof.1}) gives rise to
 	\begin{align*}
 		\frac{\widetilde{u}^{*,1}_{i}-u^*_i}{\Delta t}&=\frac{\eta_1}{\Delta x^2}\left(u^{*,1}_{i-1}-\widetilde{u}^{*,1}_i-u^{*}_i+u^*_{i+1}\right)-\frac{\eta_2}{2}(\widetilde{u}^{*,1}_{i}+u_i^*)+f_i\\
 		&=\frac{\eta_1}{\Delta x^2}\left(u^*_{i-1}-2u^{*}_i+u^*_{i+1}\right) -\eta_2u_i^*+f_i -\left(\frac{\eta_1}{\Delta x^2}+\frac{\eta_2}{2}\right)(\widetilde{u}^{*,1}_i-u^*_i)\\
 		&=\eta_1(\nabla_h^2 u^*)_i-\eta_2u^*_i+f_i-\left(\frac{\eta_1}{\Delta x^2}+\frac{\eta_2}{2}\right)(\widetilde{u}^{*,1}_i-u^*_i),
 	\end{align*}
 	where the second equality holds since $u^{*,1}_{i-1}=u^*_{i-1}$.
 	Therefore,
 	\begin{align}
 		\left(\frac{1}{\Delta t}+\frac{\eta_1}{\Delta x^2}+\frac{\eta_2}{2}\right)(\widetilde{u}^{*,1}_i-u^*_i)=\eta_1(\nabla_h^2 u^*)_i-\eta_2 u^*_i+f_i.
 		\label{eq.proof.1}
 	\end{align}
 	When $x_i\in \Omega_1$, we have $u^*_i=\psi_i$ and $-\eta_1(\nabla_h^2 u^*)_i+\eta_2 u^*-f_i\geq 0$. Substituting this relation into (\ref{eq.proof.1}) gives rise to
 	\begin{align}
 		\left(\frac{1}{\Delta t}+\frac{\eta_1}{\Delta x^2}+\frac{\eta_2}{2}\right)(\widetilde{u}^{*,1}_i-u^*_i)\leq 0,
 	\end{align}
 	implying that $\widetilde{u}^{*,1}_i\leq u^*_i=\psi_i$. We have
 	\begin{align*}
 		u^{*,1}_i=\max(\psi_i,\widetilde{u}^{*,1}_{i})=\psi_i=u^*_i.
 	\end{align*}
 	When $x_i\in \Omega_2$, we have $u^*_i\geq \psi_i$ and $-\eta_1(\nabla_h^2 u^*)_i+\eta_2 u^*-f_i=0$. Substituting this relation into (\ref{eq.proof.1}) gives rise to
 	\begin{align}
 		\left(\frac{1}{\Delta t}+\frac{\eta_1}{\Delta x^2}+\frac{\eta_2}{2}\right)(\widetilde{u}^{*,1}_i-u^*_i)=0,
 	\end{align}
 	implying that $\widetilde{u}^{*,1}_i=u^*_i$.
 	We have
 	\begin{align*}
 		u^{*,1}_i=\max(\psi_i,\widetilde{u}^{*,1}_{i})=\max(\psi_i,u^*_i)=u^*_i.
 	\end{align*}
 	Therefore, $u^{*,1}_i=u^*_i$, By mathematical induction, we have $u^{*,1}=u^*$. 
 	
 	Similarly, one can show that $u^{*,2}=u^*$ by inducting from $i=M$ to $i=0$. Thus 
 	\begin{align}
 		u^{*,*}=\frac{1}{2}(u^{*,1}+u^{*,2})=u^*,
 	\end{align}
 	and the proof is finished.
 \end{proof}

 Note that the numerical solver (\ref{eq.LinearObs.CADE}) for the linear problem (\ref{eq.obs.linear}) is simply a CADE algorithm. Setting $\eta_1=1,\eta_2=0$ gives the following corollary:
 \begin{corollary}\label{coro.steady}
 	For any given obstacle function $\psi$ and source function $f$, let $u^*$ be the solution to the discrete analogue of the optimality condition for the linear problem (\ref{eq.obs.linear})
 	\begin{align}
 		\min(-\nabla^2_h u-f, u-\psi)=0,
 	\end{align}
 where $\nabla^2_h$ denotes the discretized $\nabla^2$ by the central difference scheme. Then $u^*$ is a steady state of scheme (\ref{eq.LinearObs.CADE}).
 \end{corollary}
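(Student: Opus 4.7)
The plan is to observe that Corollary~\ref{coro.steady} is an immediate specialization of Theorem~\ref{thm.steady} and to reduce the proof entirely to checking that the scheme (\ref{eq.LinearObs.CADE}) matches the general CADE scheme (\ref{eq.cade.1})--(\ref{eq.cade.3}) under the parameter choice $\eta_1 = 1$, $\eta_2 = 0$.

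First I would inspect the scheme (\ref{eq.LinearObs.CADE}), namely $u^{n+1}=\CADE(\nabla^2,u^{n+1},u^n,f,\psi,\Delta t)$, and note that the differential operator appearing inside the $\CADE$ call is $\nabla^2$, which is the operator $\eta_1 \nabla^2 - \eta_2 I$ with $\eta_1 = 1$ and $\eta_2 = 0$. Thus (\ref{eq.LinearObs.CADE}) is literally the instance of scheme (\ref{eq.cade.1})--(\ref{eq.cade.3}) corresponding to those parameter values. Similarly, I would verify that the discrete optimality equation in the corollary, $\min(-\nabla^2_h u - f,\, u - \psi) = 0$, is exactly the equation $\min(-\eta_1 \nabla^2_h u + \eta_2 u - f,\, u - \psi) = 0$ appearing in Theorem~\ref{thm.steady} under the same specialization $\eta_1 = 1$, $\eta_2 = 0$.

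Having checked both identifications, I would invoke Theorem~\ref{thm.steady} directly: its hypotheses require $\eta_1 > 0$ and $\eta_2 \ge 0$, both of which are satisfied by $\eta_1 = 1$, $\eta_2 = 0$. The theorem then guarantees that any solution $u^*$ of the discrete problem is a steady state of the CADE iteration, which is precisely the claim of the corollary.

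There is essentially no obstacle here; the corollary is a verbatim substitution, and the only genuine content, the mathematical induction across grid points combined with the case split on the contact set $\Omega_1$ versus the non-contact set $\Omega_2$, has already been carried out in the proof of Theorem~\ref{thm.steady}. Consequently, the proof will consist of a single sentence stating the specialization and citing Theorem~\ref{thm.steady}.
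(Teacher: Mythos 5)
Your proposal is correct and matches the paper's approach exactly: the paper introduces the corollary with the sentence ``Setting $\eta_1=1,\eta_2=0$ gives the following corollary,'' so the intended proof is precisely the specialization of Theorem~\ref{thm.steady} that you describe. Your checks that the scheme, the discrete equation, and the hypotheses $\eta_1>0$, $\eta_2\geq 0$ all line up under this substitution are the whole content of the argument.
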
 

  \subsubsection{On the solution of (\ref{eq.NonLinearObs.CADE}) }
  Problem (\ref{eq.NonLinearObs.CADE}) can be solved using CADE discussed in Section \ref{sec.CADE}, in which one still needs to compute $\nabla \cdot \bp$. To compute $\nabla \cdot \bp$, the value of $\bp$ along $\partial\Omega$ is needed. Here we simply assume zero-Neumann boundary condition, i.e.,
  $$
  \frac{\partial \bp}{\partial \bn}=\mathbf{0}.
  $$
  
  We summarize our algorithms in Algorithm \ref{alg.linear} and \ref{alg.nonlinear}.
  
  \begin{algorithm}[th!]
  	\caption{\label{alg.linear} An operator-splitting method for the linear problem (\ref{eq.obs.linear}).}
  	\begin{algorithmic}
  		\STATE {\bf Input:} The obstacle $\psi$, initial guess $u_0$, external force $f$ and  boundary condition.
  		\STATE {\bf Initialization:} $n=0,$ $u^0=u_0$.
  		\WHILE{not converge}
  		\STATE 1. Solve (\ref{eq.LinearObs.CADE}) for $u^{n+1}$ according to (\ref{eq.cade.1})--(\ref{eq.cade.3}).
  		\STATE 2. Set $n=n+1$.
  		\ENDWHILE
  		\STATE {\bf Output:} The converged function $u^*$.
  	\end{algorithmic}
  \end{algorithm}

\begin{algorithm}[th!]
	\caption{\label{alg.nonlinear} An operator-splitting method for the nonlinear problem (\ref{eq.obs}).}
	\begin{algorithmic}
		\STATE {\bf Input:} The obstacle $\psi$, initial guess $u_0$, external force $f$ and  boundary condition.
		\STATE {\bf Initialization:} $n=0,$ $u^0=u_0$.
		\WHILE{not converge}
		\STATE 1. Solve (\ref{eq.NonLinearObs.op.dis.1}) for $\bp^{n+1/2}$ according to (\ref{eq.NonLinear.p.2}).
		\STATE 2. Solve (\ref{eq.NonLinearObs.op.dis.2}) for $u^{n+1}$ according to (\ref{eq.cade.1})--(\ref{eq.cade.3}).
		\STATE 3. Solve (\ref{eq.NonLinearObs.op.dis.3}) for $\bp^{n+1}$.
		\STATE 4. Set $n=n+1$.
		\ENDWHILE
		\STATE {\bf Output:} The converged function $u^*$.
	\end{algorithmic}
\end{algorithm}

\section{Extensions to other related problems}\label{sec.extension}
In this section, we discuss extensions of the proposed algorithm to other problems related to the obstacle problem. Specifically, we consider the double obstacle problem and the two-phase membrane problem.
\subsection{Double obstacle problem}
\label{sec.double}
The double obstacle problem is similar to the obstacle problem (\ref{eq.obs.linear}) and (\ref{eq.obs}), except now we have two obstacles that bound the solution below and above. Let $\psi$ and $\phi$ be two real-valued functions defined on $\Omega$ such that $\phi\geq \psi$. The double obstacle problem aims to find $u\in \cH^1(\Omega)$ that solves
\begin{align}
	\min_{\substack{u\in \cH^1(\Omega)\\ \psi\leq u \leq \phi}} E_k(u)
	\label{eq.doubleobs}
\end{align}
for $k=1$ or $2$, where $E_1$ and $E_2$ correspond to the linear and nonlinear obstacle problem as defined in (\ref{eq.obs.energy}). The optimality condition of $u$ with $E_1$ is
\begin{align}
	\min\{-\nabla^2 u-f,u-\psi,\phi-u\}=0,
\end{align}
and with $E_2$ is
\begin{align}
	\min\left\{-\nabla\cdot \frac{\nabla u}{\sqrt{|\nabla u|^2+1}} -f,u-\psi, \phi-u\right\}=0.
\end{align}

With a modified CADE method, Algorithm \ref{alg.linear} and \ref{alg.nonlinear} can be applied to solve (\ref{eq.doubleobs}). Specifically, we replace (\ref{eq.cade.2.1}) and (\ref{eq.cade.2.2}) by
\begin{align}
	&u_i^{n+1,1}=\min\left\{\phi,\max\left\{ \psi_i,\zeta\left( u_i^n+\Delta tf_i^n+\frac{\Delta t\eta}{\Delta x^2}\left( u^{n+1,1}_{i-1}-u^n_i+u^n_{i+1}\right)-\frac{\Delta t\eta_2}{2}u^n\right)\right\}\right\},
	\label{eq.cade.double.1}
\end{align}
and
\begin{align}
	u_i^{n+1,2}=\min\left\{\phi,\max\left\{ \psi_i,\zeta\left( u_i^n+\Delta tf_i^n+\frac{\Delta t\eta}{\Delta x^2}\left( u^{n+1,2}_{i+1}-u^n_i+u^n_{i-1}\right)-\frac{\Delta t\eta_2}{2}u^n\right)\right\}\right\},
	\label{eq.cade.double.2}
\end{align}
respectively, where $\zeta$ is defined under (\ref{eq.heat}).

The following theorem shows that if $u^*$ is a solution to the discretized optimality condition of the linear double obstacle problem, then it is a steady state of the modified CADE algorithm using (\ref{eq.cade.double.1}) and (\ref{eq.cade.double.2}).
 \begin{theorem}\label{thm.steady.double}
	For any given function $\psi,\phi$ and $f$ such that $\phi\geq \psi$, let $u^*$ be the solution to the discrete problem
	\begin{align}
		\min\{-\nabla^2_h u-f, u-\psi,\phi-u\}=0,
	\end{align}
	where $\nabla^2_h$ denotes the central difference scheme. Then $u^*$ is a steady state of scheme (\ref{eq.cade.1}), (\ref{eq.cade.double.1}), (\ref{eq.cade.double.2}) and (\ref{eq.cade.3}).
\end{theorem}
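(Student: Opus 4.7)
The plan is to mirror the structure of the proof of Theorem \ref{thm.steady} closely, with two modifications: a three-way partition of $\Omega$ and a third case in the inductive step to handle the upper obstacle $\phi$. Specifically, I would decompose $\Omega$ into three disjoint sets $\Omega_1 = \{x_i : u^*_i = \psi_i\}$, $\Omega_2 = \{x_i : u^*_i = \phi_i\}$, and $\Omega_3 = \{x_i : -(\nabla_h^2 u^*)_i - f_i = 0\}$. The discrete optimality condition forces $-(\nabla_h^2 u^*)_i - f_i \geq 0$ on $\Omega_1$ and $-(\nabla_h^2 u^*)_i - f_i \leq 0$ on $\Omega_2$, and $\psi_i \leq u^*_i \leq \phi_i$ everywhere.

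Setting $u^n = u^*$ and denoting the output of the modified sweeps by $u^{*,1}, u^{*,2}$, I would prove $u^{*,1} = u^*$ by induction on $i$ from $0$ to $M$. The Dirichlet boundary condition gives the base case $u^{*,1}_0 = u^*_0$. For the inductive step, define the unclipped update $\widetilde{u}^{*,1}_i$ as in (\ref{eq.linear.proof.1}) (with $\eta_1 = 1$, $\eta_2 = 0$ to match the theorem's hypothesis), so that $u^{*,1}_i = \min\{\phi_i, \max\{\psi_i, \widetilde{u}^{*,1}_i\}\}$. Using the inductive hypothesis $u^{*,1}_{i-1} = u^*_{i-1}$, the same algebraic rearrangement as in Theorem \ref{thm.steady} yields
\begin{align*}
\left(\frac{1}{\Delta t}+\frac{1}{\Delta x^2}\right)(\widetilde{u}^{*,1}_i - u^*_i) = (\nabla_h^2 u^*)_i + f_i.
\end{align*}

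Then I would split into three cases. On $\Omega_1$, the sign information yields $\widetilde{u}^{*,1}_i \leq u^*_i = \psi_i$, so $\max\{\psi_i,\widetilde{u}^{*,1}_i\} = \psi_i$, and because $\phi_i \geq \psi_i$ the outer $\min$ leaves this unchanged, giving $u^{*,1}_i = \psi_i = u^*_i$. On $\Omega_2$, the sign gives $\widetilde{u}^{*,1}_i \geq u^*_i = \phi_i \geq \psi_i$, so $\max\{\psi_i,\widetilde{u}^{*,1}_i\} = \widetilde{u}^{*,1}_i \geq \phi_i$ and the outer $\min$ clips down to $\phi_i = u^*_i$. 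On $\Omega_3$, the right-hand side vanishes, so $\widetilde{u}^{*,1}_i = u^*_i$, and since $\psi_i \leq u^*_i \leq \phi_i$, neither operation modifies the value. An entirely symmetric induction from $i = M$ down to $i = 0$ gives $u^{*,2} = u^*$, and averaging yields $u^{*,*} = u^*$.

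The main obstacle is the $\Omega_2$ case, which has no analogue in Theorem \ref{thm.steady}: one must verify that the outer $\min$ correctly truncates $\widetilde{u}^{*,1}_i$ to $\phi_i$ even though the inner $\max$ has already accepted a value exceeding $\phi_i$. This is purely a matter of tracking inequality directions, but it is the reason a direct reduction to Theorem \ref{thm.steady} is not possible. No new ideas beyond careful case analysis are required, and the proof can be written as a short adaptation of the earlier argument.
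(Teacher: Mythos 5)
Your plan matches the paper's sketch: a three-region decomposition, induction along the sweep, and a third case for the upper obstacle. The structure is right, and the $\Omega_1$, $\Omega_3$ cases carry over verbatim. But there is a gap in the $\Omega_2$ case. You claim the optimality condition ``forces $-(\nabla_h^2 u^*)_i - f_i \leq 0$ on $\Omega_2$''; it does not. The displayed equation $\min\{-\nabla^2_h u - f,\, u - \psi,\, \phi - u\} = 0$ makes \emph{all three} arguments nonnegative at every point, so $-(\nabla_h^2 u^*)_i - f_i \geq 0$ everywhere, including on $\Omega_2$. Then your key identity $\bigl(\tfrac{1}{\Delta t}+\tfrac{1}{\Delta x^2}\bigr)(\widetilde u^{*,1}_i - u^*_i) = (\nabla_h^2 u^*)_i + f_i$ gives $\widetilde u^{*,1}_i \leq u^*_i = \phi_i$ on $\Omega_2$, so the outer $\min$ does nothing and the inner $\max$ can land strictly below $\phi_i$; the inductive step would break. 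Indeed the theorem is false as literally stated: with $\psi\equiv 0$, $\phi\equiv 1$, $f\equiv 0$, zero Dirichlet data, the vector $u^*$ with $u^*_0=u^*_M=0$ and $u^*_i=1$ for $0<i<M$ satisfies the displayed min-of-three equation but is not a fixed point of the modified CADE sweep.

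The sign you use, namely $-(\nabla_h^2 u^*)_i - f_i \leq 0$ where $u^*=\phi$ (so $\widetilde u^{*,1}_i \geq \phi_i$ and the outer $\min$ truncates down to $\phi_i$), is the correct complementarity for the double obstacle problem and comes from $\min\{\max\{-\nabla_h^2 u - f,\,u - \phi\},\,u - \psi\} = 0$, not from the paper's min-of-three formula, which appears to be a misprint. Your case analysis is the right argument for the problem the authors intend, but you should state the corrected complementarity explicitly rather than attributing the $\Omega_2$ sign to the displayed condition, which cannot support it. A small extra point: the sets $\Omega_1,\Omega_2,\Omega_3$ as you define them need not be disjoint; the cases are exhaustive and agree on overlaps, so this is harmless, but the word ``disjoint'' should be dropped.
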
 
Theorem \ref{thm.steady.double} can be proved analogously to Theorem \ref{thm.steady}, except we need to divided the domain into three regions: the regions on which $u^*=\psi$, $u^*=\phi$ and $-\nabla_h^2 u^*-f=0$. Then the proof follows that of Theorem \ref{thm.steady}.

\subsection{Two-phase membrane problem}
\label{sec.twophase}
The two-phase membrane problem is a free boundary problem and studies the state of an elastic membrane that touches the phase boundary of two phases with different viscosity. When the membrane is pulled away from the phase boundary towards both phases, its equilibrium state solves
\begin{align}
	\min_u \int_{\Omega} \left(\frac{1}{2}|\nabla u|^2  + \mu_1 u_+ -\mu_2 u_-\right) d\bx
	\label{eq.twophase}
\end{align}
where $u_+=\max\{u,0\},\ u_-=\min\{u,0\}$ and $\mu_1,\mu_2>0$ denote forces applied on $u$, see \cite[Section 1.1.5]{petrosyan2012regularity} for details. Following the idea of the proposed algorithm for the nonlinear obstacle problem, we next derive an operator splitting method for (\ref{eq.twophase}). Note that $u_+=(u+|u|)/2$ and $u_-=(u-|u|)/2$. Denote $\lambda_1=(\mu_1-\mu_2)/2,\ \lambda_2=(\mu_1+\mu_2)/2$. Problem (\ref{eq.twophase}) can be rewritten as
\begin{align}
	\min_u \int_{\Omega} \left(\frac{1}{2}|\nabla u|^2  + \lambda_1 u +\lambda_2 |u| \right)d\bx.
	\label{eq.twophase.1}
\end{align}
We decouple the nonlinearity by introducing a variable $v$ and reformulate (\ref{eq.twophase.1}) as
\begin{align} 
	\min_u \int_{\Omega} \left(\frac{1}{2}|\nabla u|^2  + \lambda_1 u +\lambda_2 |v| +\frac{\alpha}{2}|u-v|^2 \right)d\bx ,
\label{eq.twophase.2}
\end{align}
where $\alpha>0$ is a parameter. Denote 
$$
J_4(u)=\int_{\Omega} \left(\frac{1}{2}|\nabla u|^2  + \lambda_1 u\right) d\bx, \ J_5(v)=\int_{\Omega} \lambda_2 |v| d\bx, \ J_6(u,v)=\int_{\Omega} \frac{\alpha}{2}|u-v|^2d\bx.
$$
We associate (\ref{eq.twophase.2}) with the following initial value problem
\begin{align}
	\begin{cases}
		\frac{\partial v}{\partial t} +\partial_v J_5(v)+ D_v J_6(u,v)=0,\\
		\gamma\frac{\partial u}{\partial t}+ D_u J_4(u)+D_u J_6(u,v)=0,		
	\end{cases}
\label{eq.twophase.3}
\end{align}
which is then time--discretized as
\begin{align}
	\begin{cases}
		\frac{v^{n+1}-v^n}{\Delta t} +\partial_v J_5(v^{n+1})+ D_v J_6(u^n,v^{n+1})=0,\\
		\gamma\frac{u^{n+1}-u^n}{\Delta t}+ D_u J_4(\widetilde{u}^{n+1/2})+D_u J_6(\widetilde{u}^{n+1/2},v^{n+1})=0,	
	\end{cases}
	\label{eq.twophase.4}
\end{align}
where $D_u J_4(\widetilde{u}^{n+1/2})$ denotes the ADE treatment of $D_u J_4$ using $u^{n+1}$ and $u^n$, and $D_u J_6(\widetilde{u}^{n+1/2},v^{n+1})$ is the ADE treatment of $D_u J_6$ using $u^{n+1},u^n$ and $v^{n+1}$, see (\ref{eq.ade.2.1}) and (\ref{eq.ade.2.2}) for details.

For the updating formula of $v^{n+1}$, note that it solves 
\begin{align}
	v^{n+1}=\argmin_v \int_{\Omega} \left( \frac{1}{2} |v-v^n|^2+ \lambda_2\Delta t |v| + \frac{\alpha\Delta t}{2}|v-u^n|^2 \right)d\bx.
\end{align}
We have the explicit formula using the shrinkage operator
\begin{align}
	v^{n+1}=\max\left\{ 0,\frac{1-\lambda_2\Delta t}{|v^n+\alpha\Delta t u^n|}\right\} \frac{v^n+\alpha\Delta t u^n}{1+\alpha\Delta t}.
\end{align}

To update $u^{n+1}$, since there is no constraints for $u$, we will use the ADE method:
\begin{align}
	u^{n+1}=\ADE(\nabla^2-\alpha I,u^{n+1},u^n,u^n+\alpha v^{n+1},\Delta t/\gamma).
\end{align}


  \section{Numerical experiments}\label{sec.experiments}
In this section, we demonstrate the performance and efficiency of the proposed methods. All experiments are conducted using MATLAB R2020(b) on a Windows desktop of 16GB RAM and Intel(R) Core(TM) i7-10700 CPU: 2.90GHz. In all examples, we use Cartesian grids either on an interval for one-dimensional problems or on a rectangular domain for two-dimensional problems. Without specification, we set $f=0$, i.e., there is no external force. For both algorithms, we use the stopping criterion 
  $$
  \|u^{n+1}-u^n\|_{\infty}< tol
  $$
  for a small $tol>0$. When the exact solution $u^*$ is given, for one-dimensional problems, we define the $L^2$ error and $L^{\infty}$ error of $u$ as
  \begin{align}
  	\|u-u^*\|_2=\sqrt{\sum_i (u_i-u_i^*)^2\Delta x} \mbox{ and } \|u-u^*\|_{\infty}=\max_i |u_i-u^*_i|,
  \end{align}
respectively.
For two dimensional problems, we use $\Delta x=\Delta y$ and define
\begin{align}
	\|u-u^*\|_2=\sqrt{\sum_{i,j}(u_{i,j}-u_{i,j}^*)^2\Delta x^2} \mbox{ and } \|u-u^*\|_{\infty}=\max_{i,j} |u_{i,j}-u^*_{i,j}|.
\end{align}

  \subsection{One dimensional examples of the linear obstacle problem}
  \label{sec.numerical.linear.1D}
  We first test Algorithm \ref{alg.linear} on three one-dimensional examples of the linear problems considered in \cite{tran20151,zosso2017efficient}. Consider the obstacles defined as
  \begin{align}
  	\psi_1(x)=\begin{cases}
  		100x^2 & \mbox{ for } 0\leq x\leq 0.25,\\
  		100x(1-x)-12.5 &\mbox{ for } 0.25\leq x\leq 0.5,\\
  		\psi_1(1-x) & \mbox{ for } 0.5\leq x\leq 1,
  	\end{cases}
  \label{eq.ex.obs1.1d.1}\\
  	\psi_2(x)= 
  \begin{cases}
  	10\sin(2\pi x) & \mbox{ for } 0\leq x\leq 0.25,\\
  	5\cos(\pi(4x-1))+5 & \mbox{ for } 0.25\leq x\leq 0.5,\\
  	\psi_2(1-x) & \mbox{ for } 0.5\leq x \leq 1,
  \end{cases}
  \label{eq.ex.obs1.1d.2}
  \end{align}
and
\begin{align}
	\psi_3(x)= 10\sin^2(\pi(x+1)^2), \quad 0\leq x \leq 1.
\label{eq.ex.obs1.1d.3}
\end{align}
For both examples, our computational domain is $[0,1]$. We use boundary condition $u(0)=u(1)=0$ for $\psi_1$ and $\psi_2$, and $u(0)=5,\ u(1)=10$ for $\psi_3$. In Algorithm \ref{alg.linear}, we set $\Delta t=0.1\Delta x$. With $\Delta x=1/256$, the obstacles and our numerical solutions are shown in the first row of Figure \ref{fig.linear1d.general}. For all examples, except for the region that the solution contacts the obstacle, the solution is linear. The histories of the error $\|u^{n+1}-u^n\|_{\infty}$ for the three experiments are shown in the second row. Linear convergence is observed.

\begin{figure}[ht]
	\centering
	\begin{tabular}{ccc}
		(a) & (b) & (c)\\
		\includegraphics[width=0.3\textwidth]{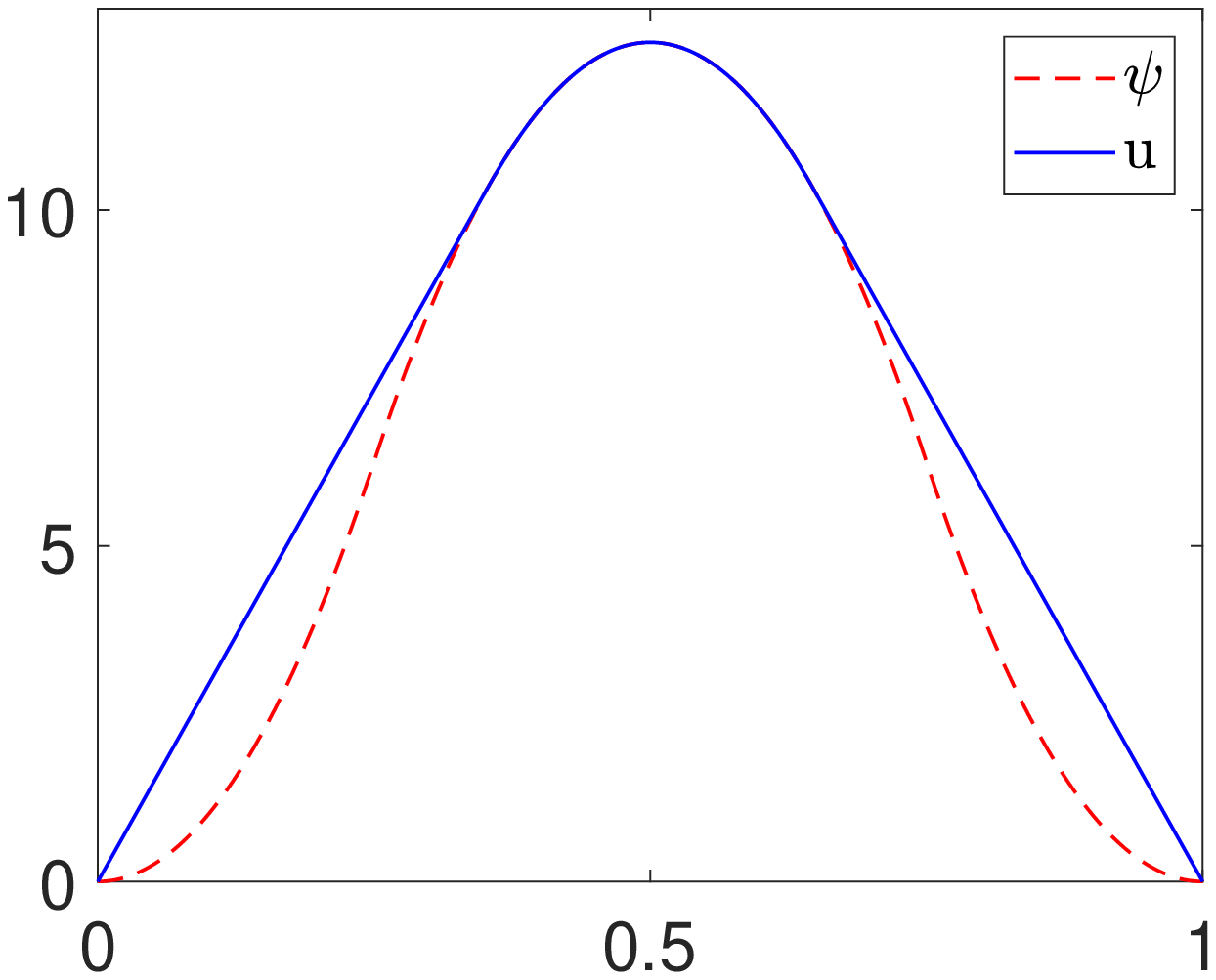} & 
		\includegraphics[width=0.3\textwidth]{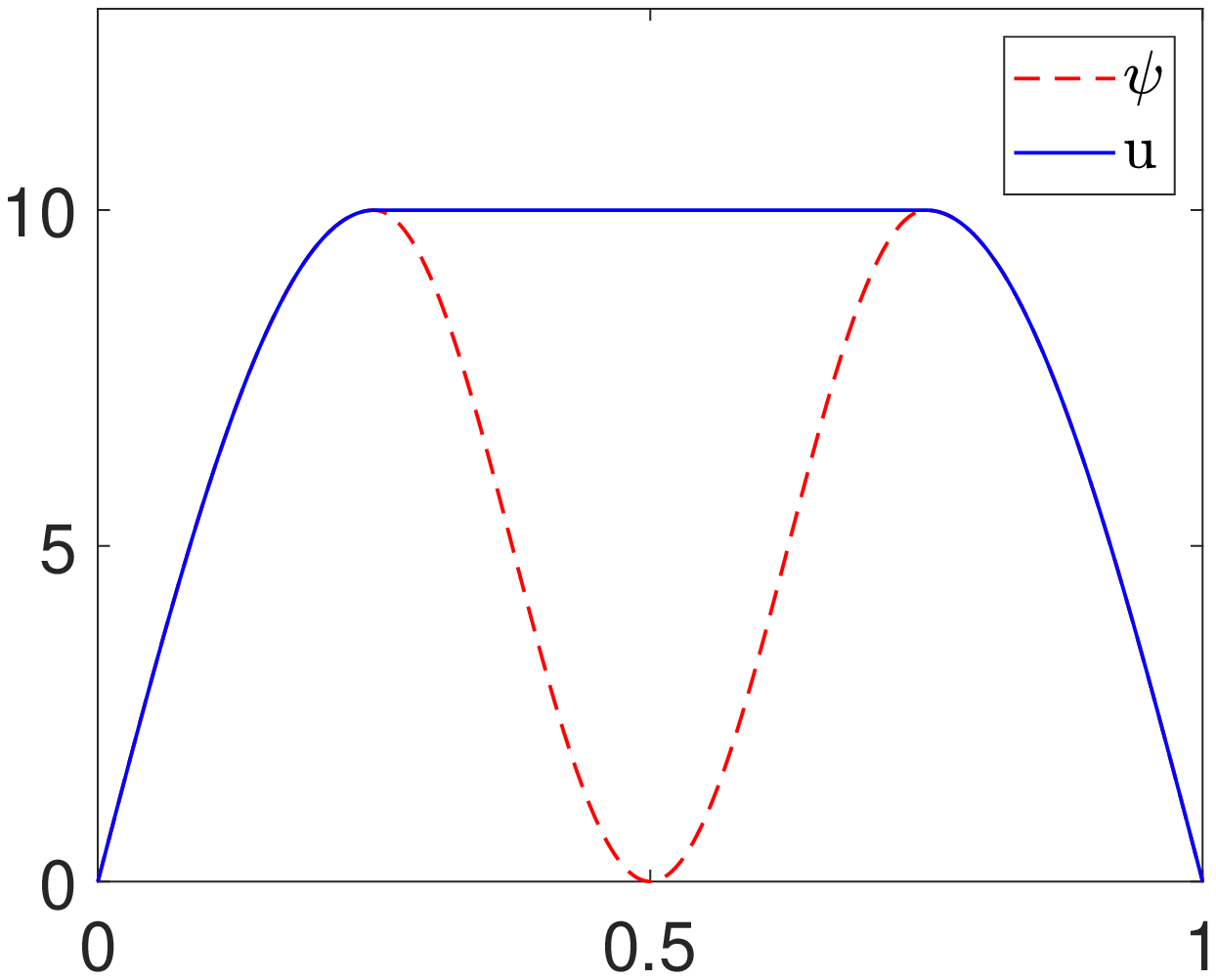} & 
		\includegraphics[width=0.3\textwidth]{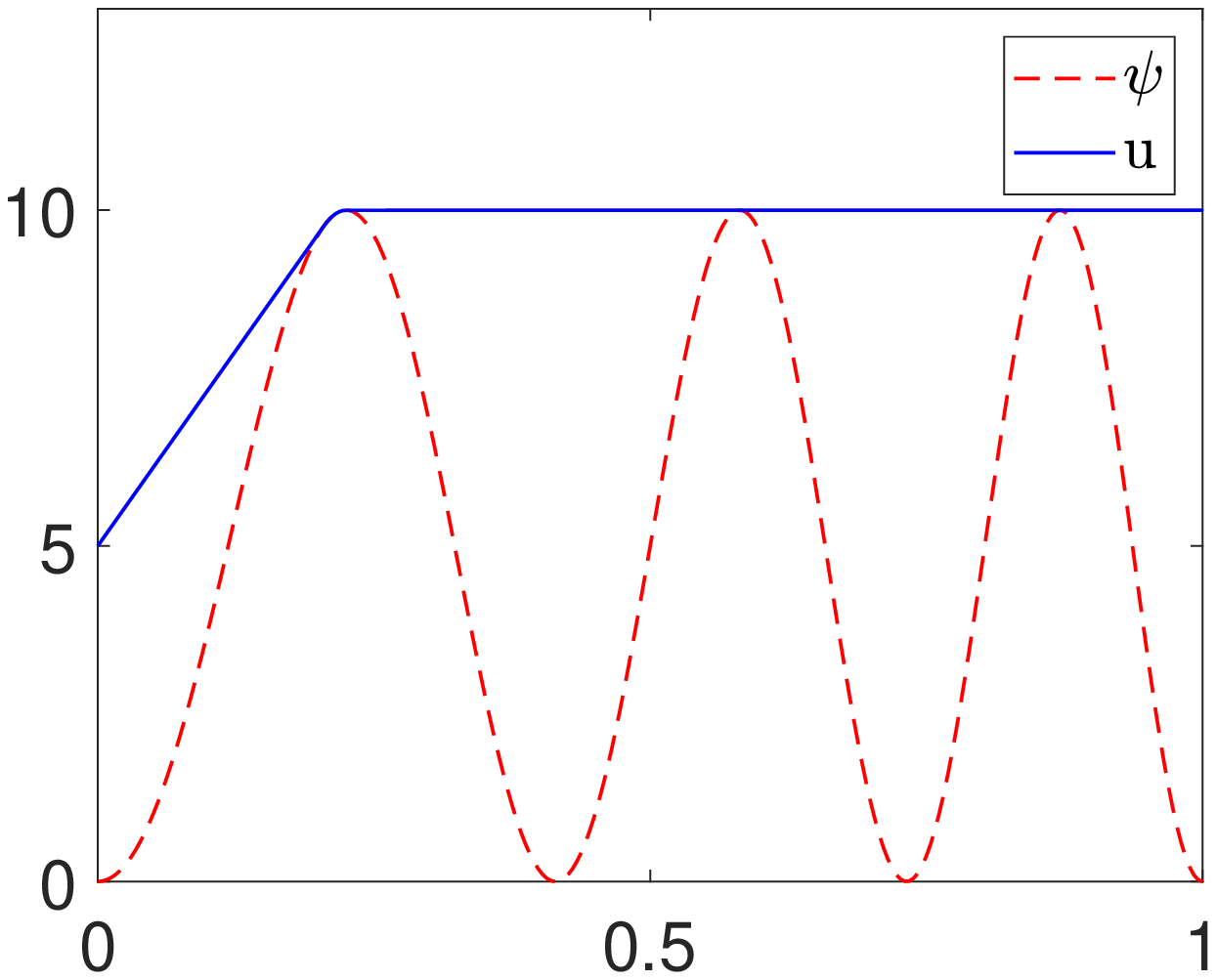}\\
		\includegraphics[width=0.3\textwidth]{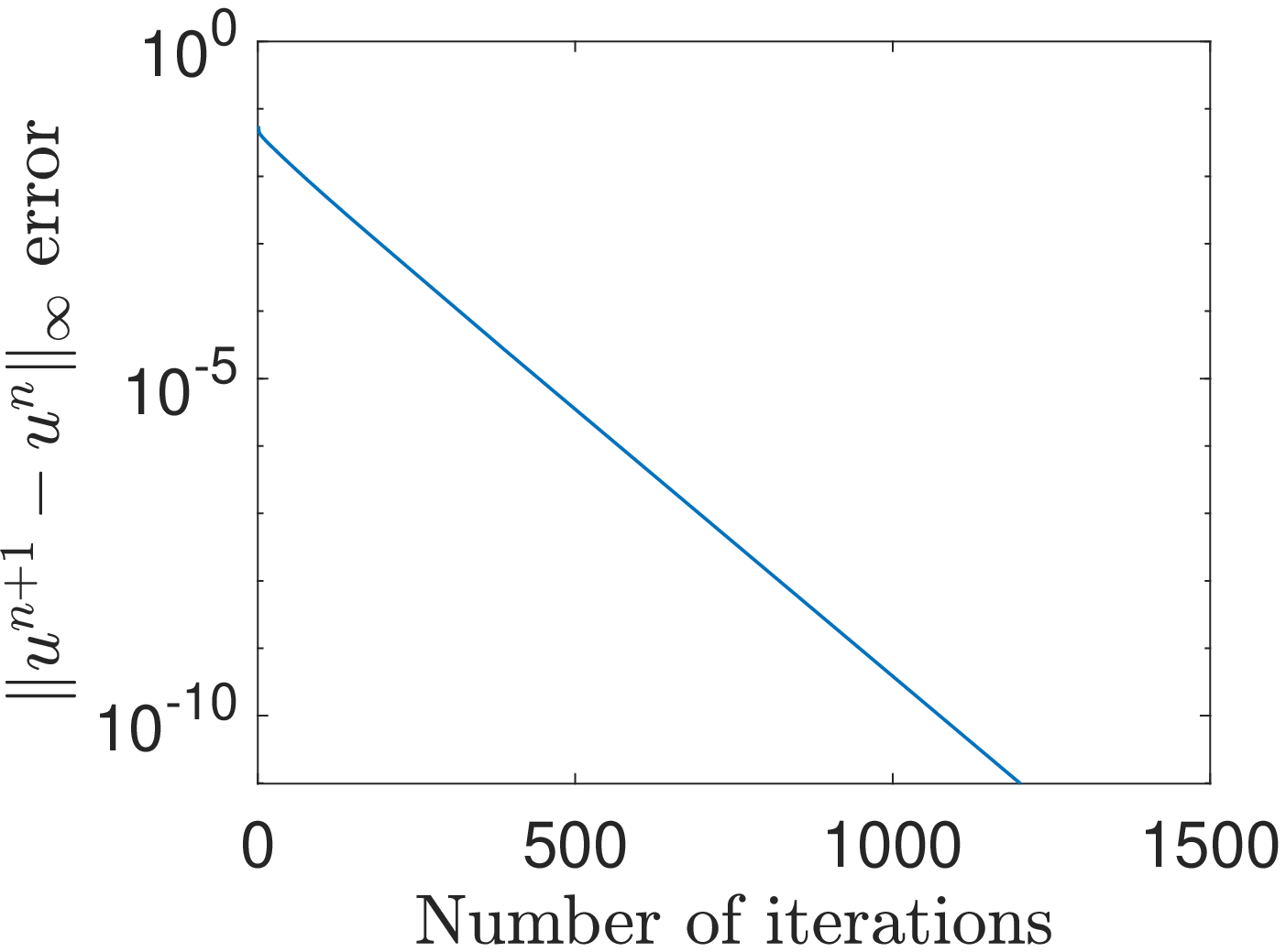} &
		\includegraphics[width=0.3\textwidth]{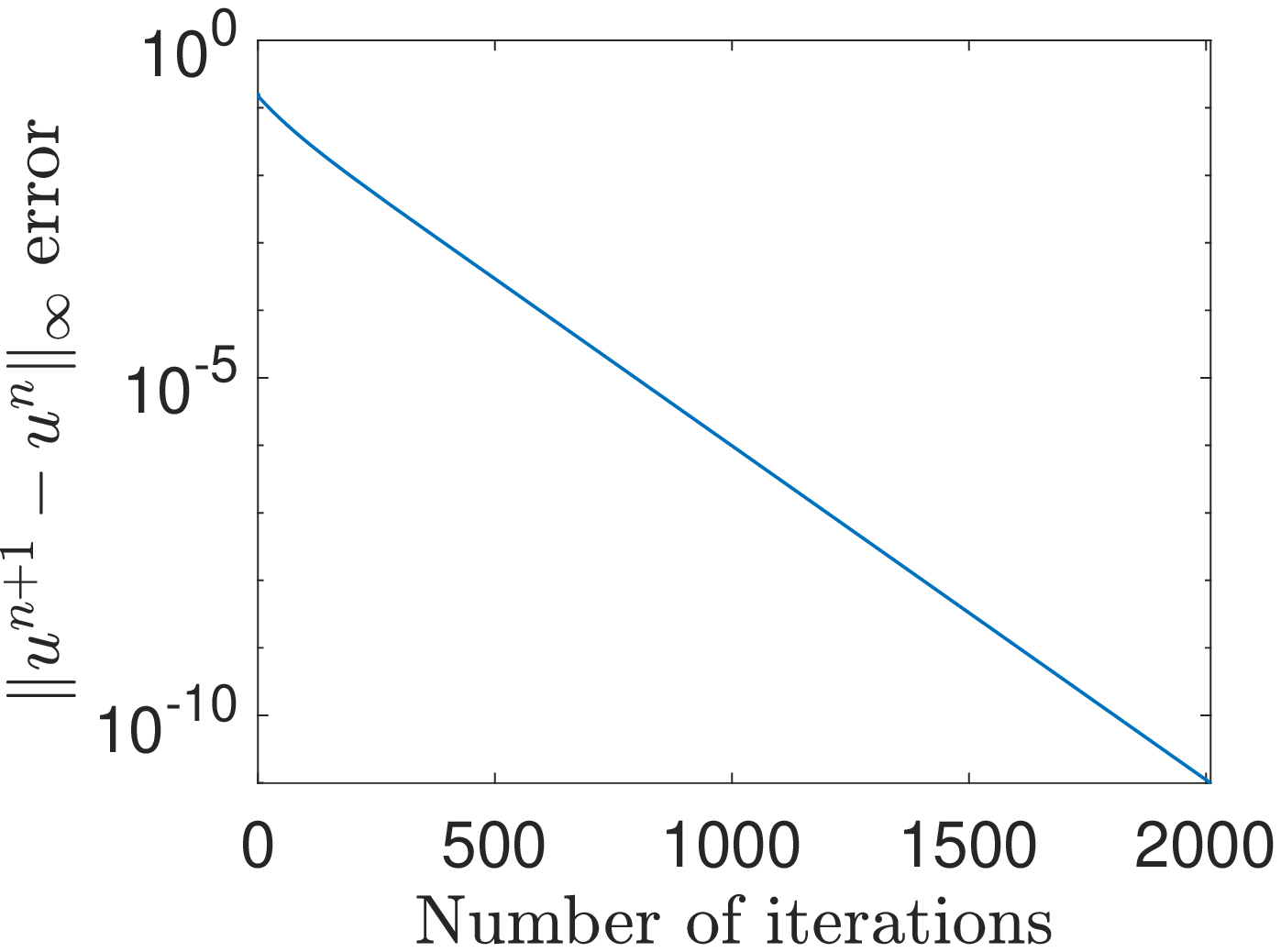} &
		\includegraphics[width=0.3\textwidth]{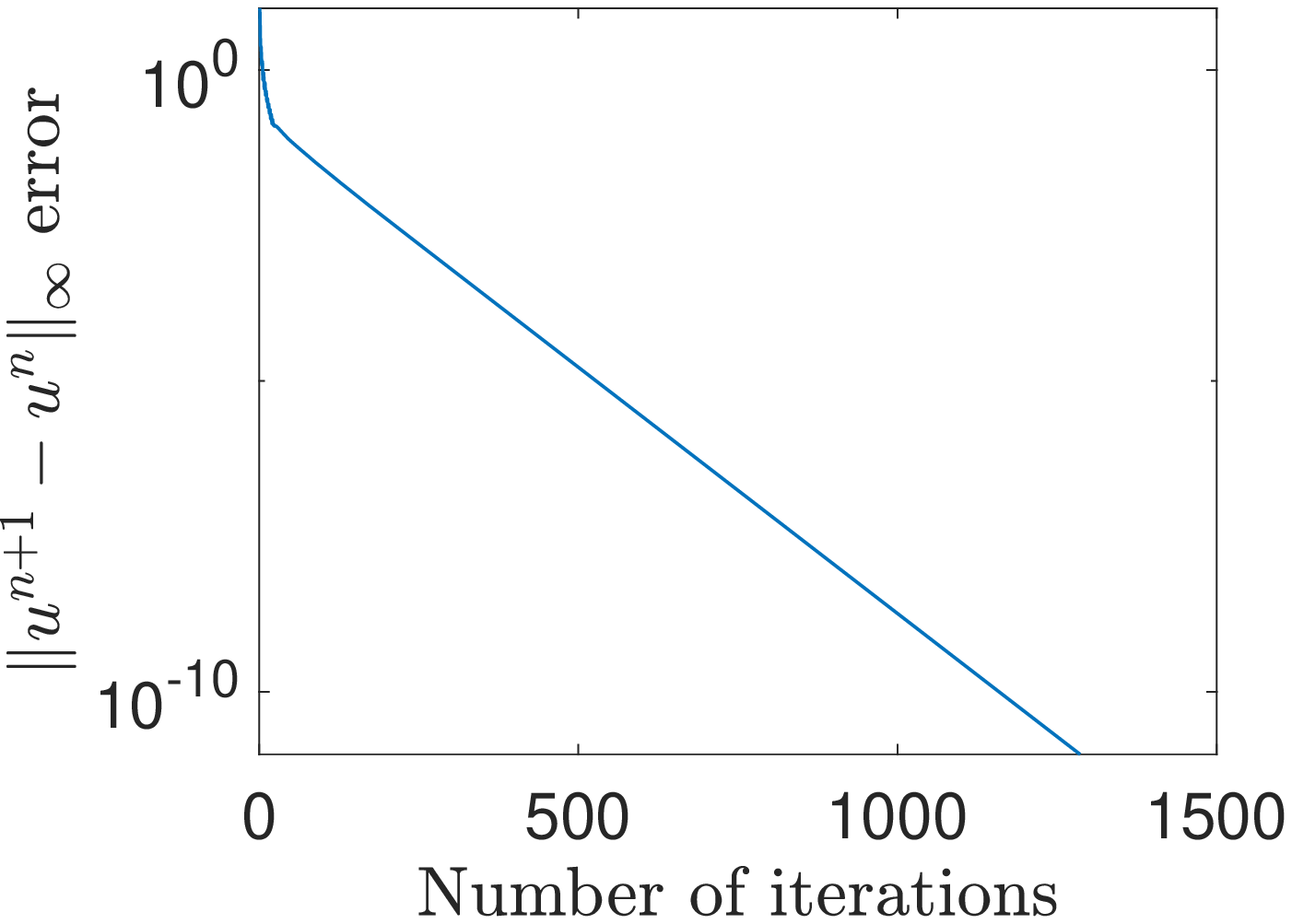} 
	\end{tabular}
	\caption{(One dimensional linear problems with $\Delta x=1/256$.) Column (a)-(c) correspond to obstacles $\psi_1,\ \psi_2,\ \psi_3$, respectively. First row: Graph of the obstacle and the numerical solution by Algorithm \ref{alg.linear}. Second row: Histories of the error $\|u^{n+1}-u^n\|_{\infty}$ .}
	\label{fig.linear1d.general}
\end{figure}

For $\psi_1$, the exact solution is given as
\begin{align}
	u_1^*(x)=\begin{cases}
		(100-50\sqrt{2})x & \mbox{ for } 0\leq x\leq \frac{1}{2\sqrt{2}},\\
		100x(1-x)-12.5 & \mbox{ for } \frac{1}{2\sqrt{2}}\leq x \leq 0.5,\\
		u_1^*(1-x) & \mbox{ for } 0.5\leq x \leq 1,
	\end{cases}
\end{align}
We present the evolution of the $L^2$ and $L^{\infty}$ error of our solution in Figure \ref{fig.linear1d.err}. Again, linear convergence is observed. Both errors achieve their minimum with around 600 iterations.

\begin{figure}[ht]
	\centering
	\begin{tabular}{ccc}
		\includegraphics[width=0.4\textwidth]{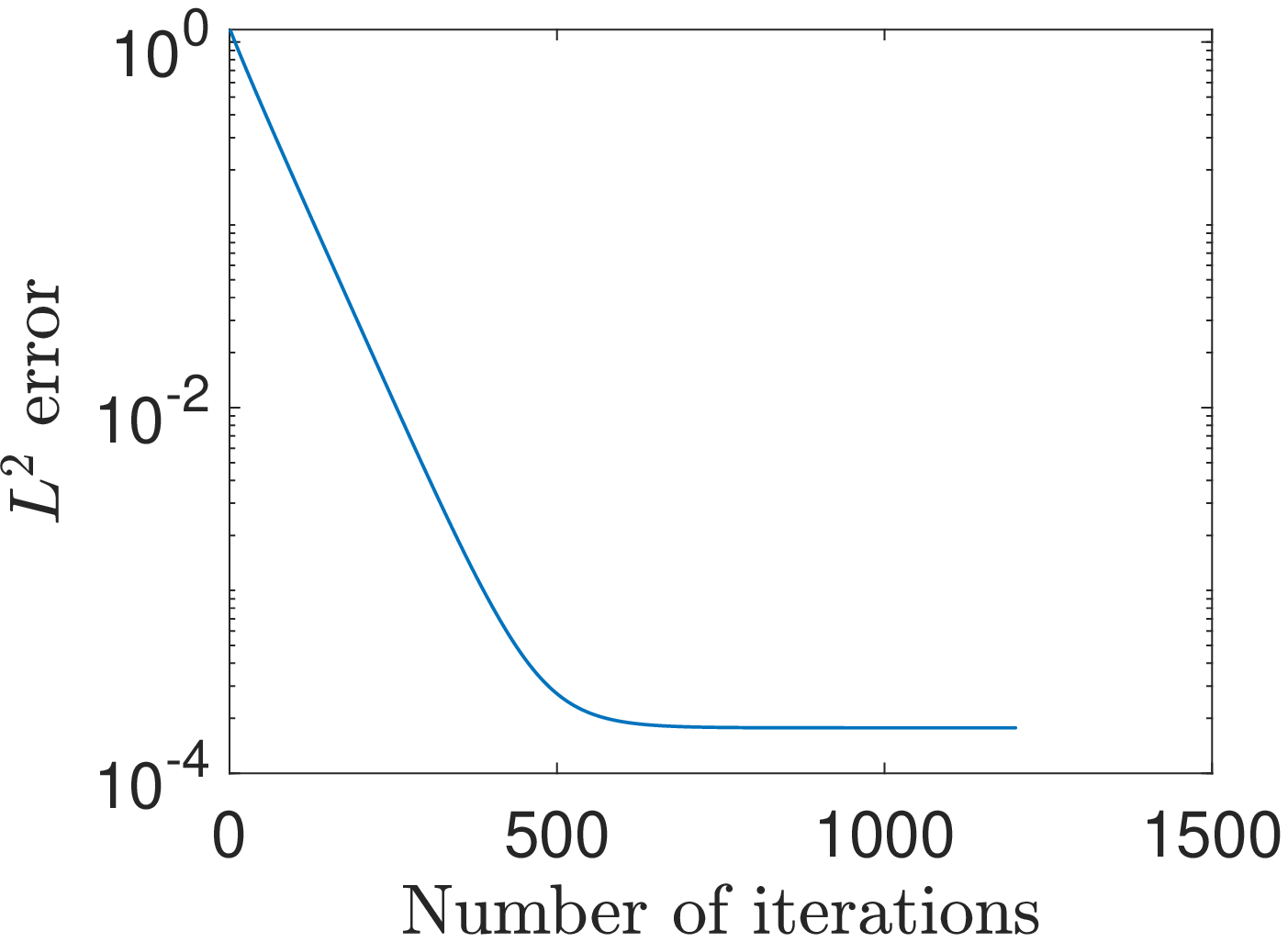} &
		\includegraphics[width=0.4\textwidth]{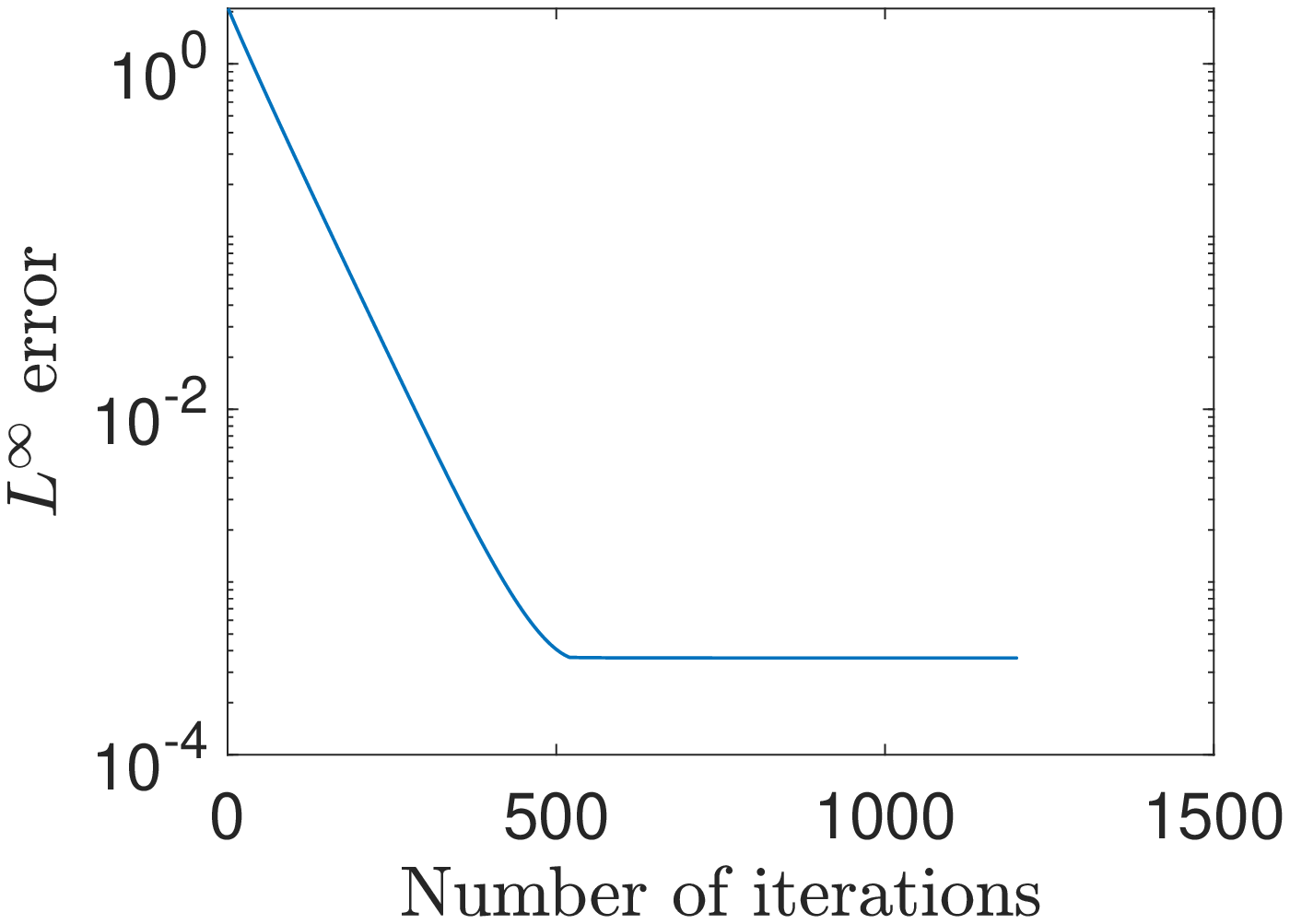} 
	\end{tabular}
	\caption{(One dimensional linear problem with obstacle $\psi_1$ and $\Delta x=1/256$.) History of (a) the $L^2$ error, and (b) the $L^{\infty}$ error of our numerical solution by Algorithm \ref{alg.linear}.}
	\label{fig.linear1d.err}
\end{figure}

We then compare our Algorithm \ref{alg.linear} with the primal-dual (PD) method in \cite{zosso2017efficient} and the split Bregman (SB) method in \cite{tran20151} on obstacle $\psi_1$. In each iteration of SB, one needs to solve a linear system, for which we use MATLAB's backslash which determines the best solver by itself. We set the stopping criterion as $\|u^n-u^{n+1}\|_{\infty}\leq 10^{-11}$ for all methods. The results by all methods have similar $L^2$ error and $L^{\infty}$ error. We compare their computational costs in Table \ref{tab.linear1d}. Our proposed method is the most efficient one.

\begin{table}[ht]
	\centering
	\begin{tabular}{c||rr|rr|rr}
		\hline
		$\Delta x$ & \multicolumn{2}{c|}{Our algorithm} & \multicolumn{2}{c|}{PD} &  \multicolumn{2}{c}{SB} \\
		\hline
		1/64 &  299 &(0.0004) & 2488 &(0.0162) & 3273 &(0.1223) \\
		\hline
		1/128 & 595 &(0.0014) & 3191 &(0.0257) & 13194 &(1.5670) \\
		\hline
		1/256 & 1201 &(0.0068) & 3796 &(0.0362) & 36166 &(14.5045)  \\
		\hline
		1/512 & 2363 &(0.0163) & 3955& (0.0533) & 92293 &(200.9334) \\
		\hline
	\end{tabular}
	\caption{\label{tab.linear1d}(One dimensional linear problem with obstacle $\psi_1$.) Comparison of the number of iterations (CPU time in seconds) required to satisfy the stopping criterion of the propose algorithm, the primal-dual method in \cite{zosso2017efficient}, and the split Bregman method in \cite{tran20151}.}
	
\end{table}

\begin{figure}[t!]
	\centering
	\begin{tabular}{cc}
		(a) & (b) \\
		\includegraphics[width=0.4\textwidth]{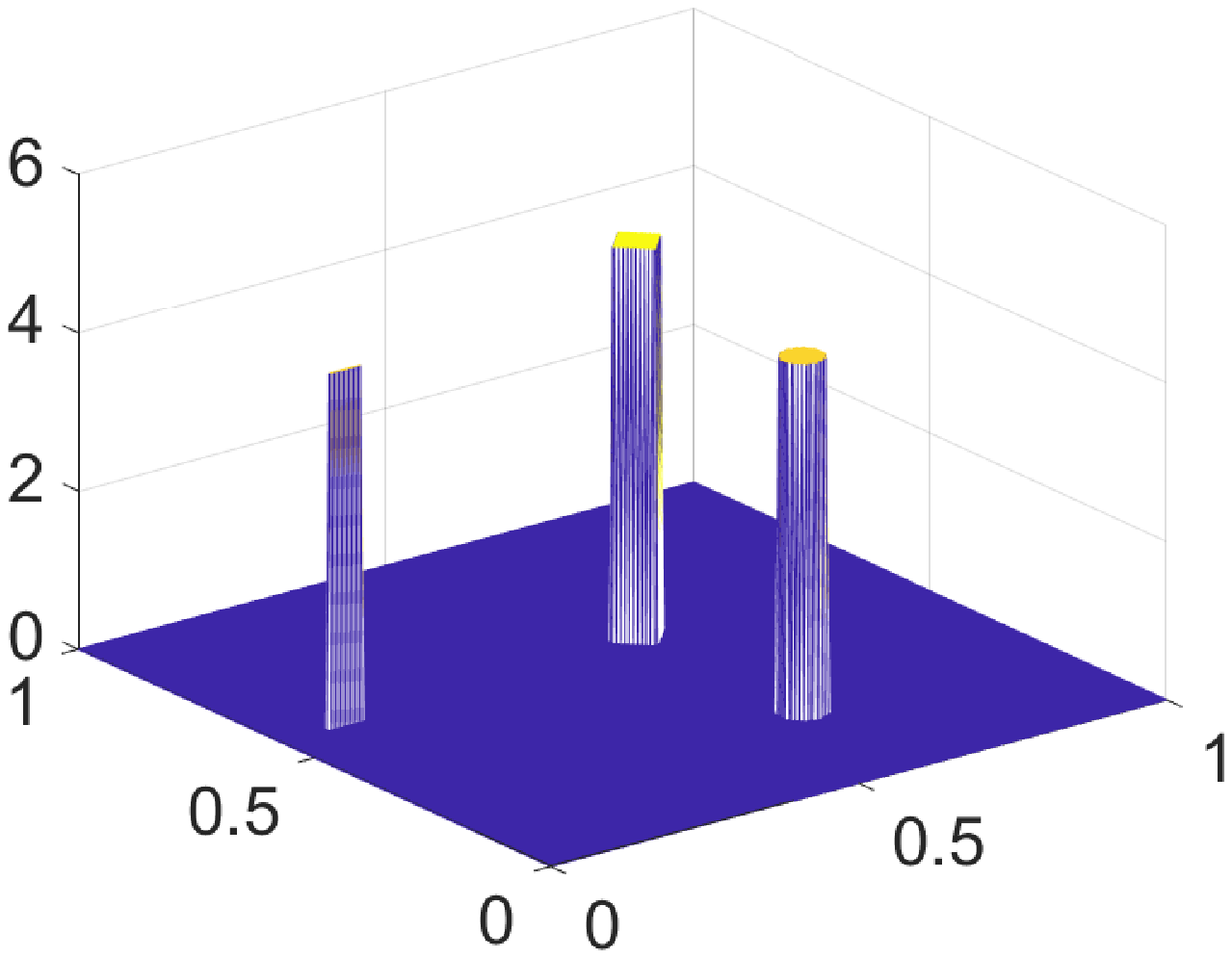} & 
		\includegraphics[width=0.4\textwidth]{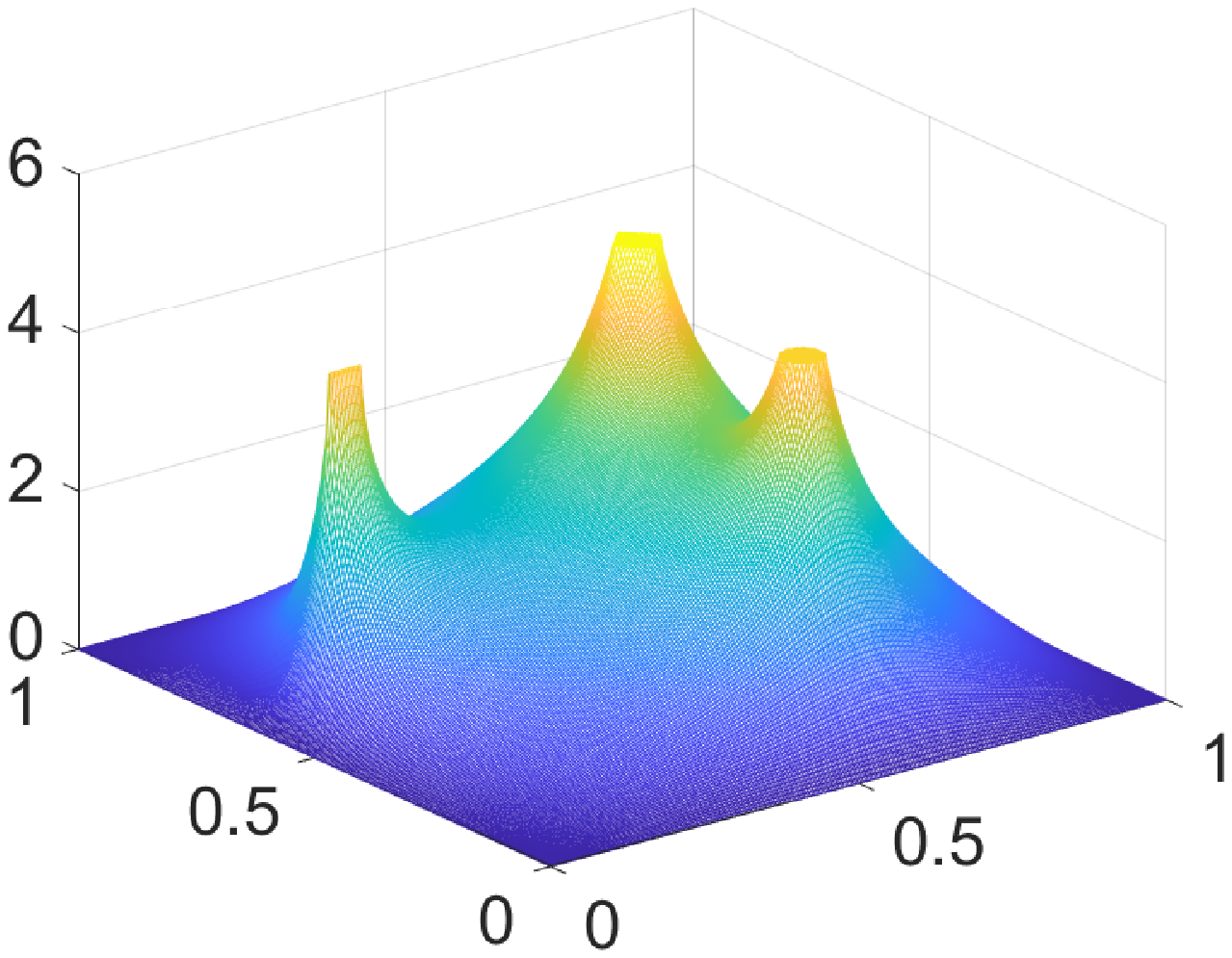}\\
		(c) & (d) \\
		\includegraphics[width=0.4\textwidth]{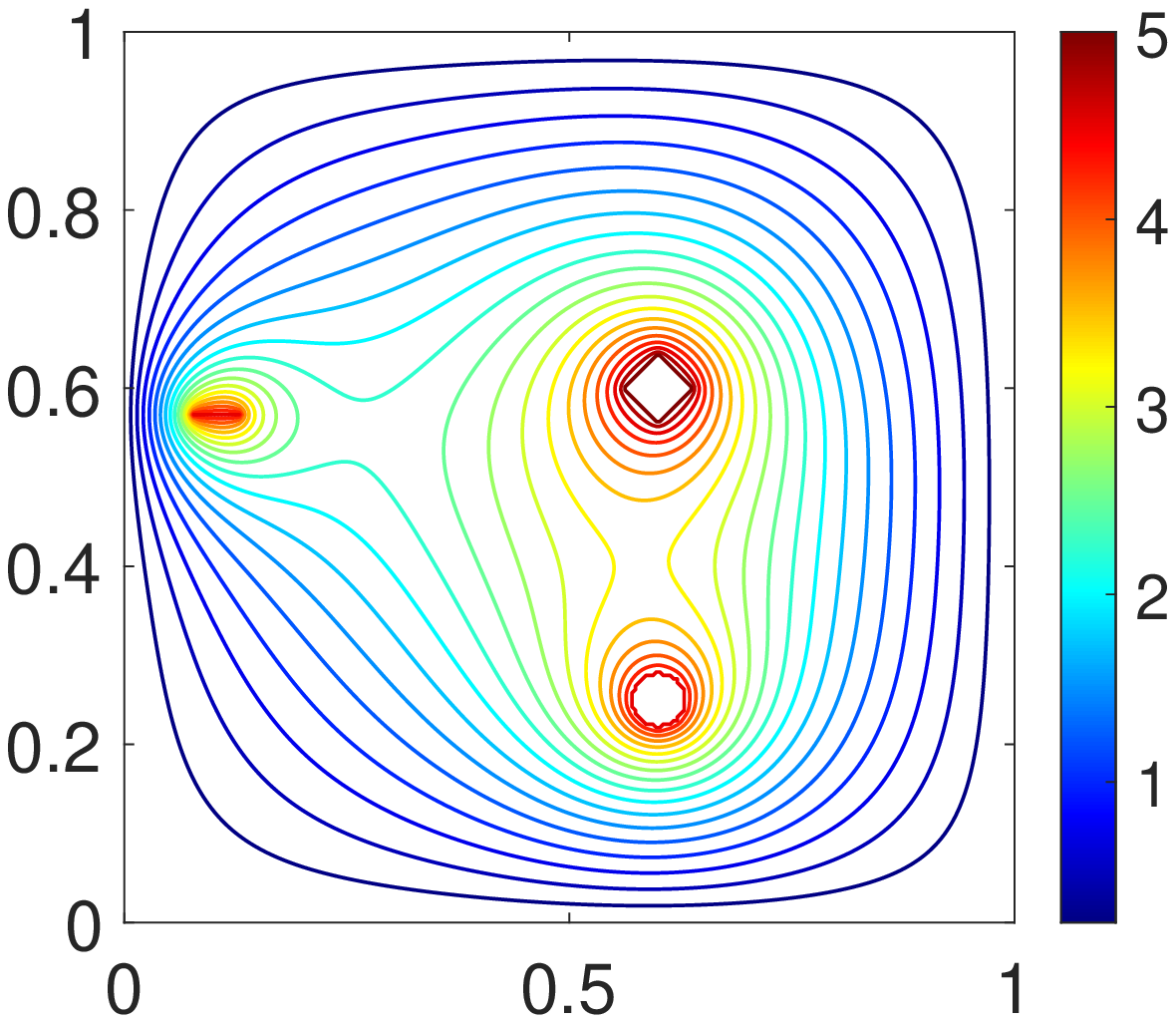}&
		\includegraphics[width=0.4\textwidth]{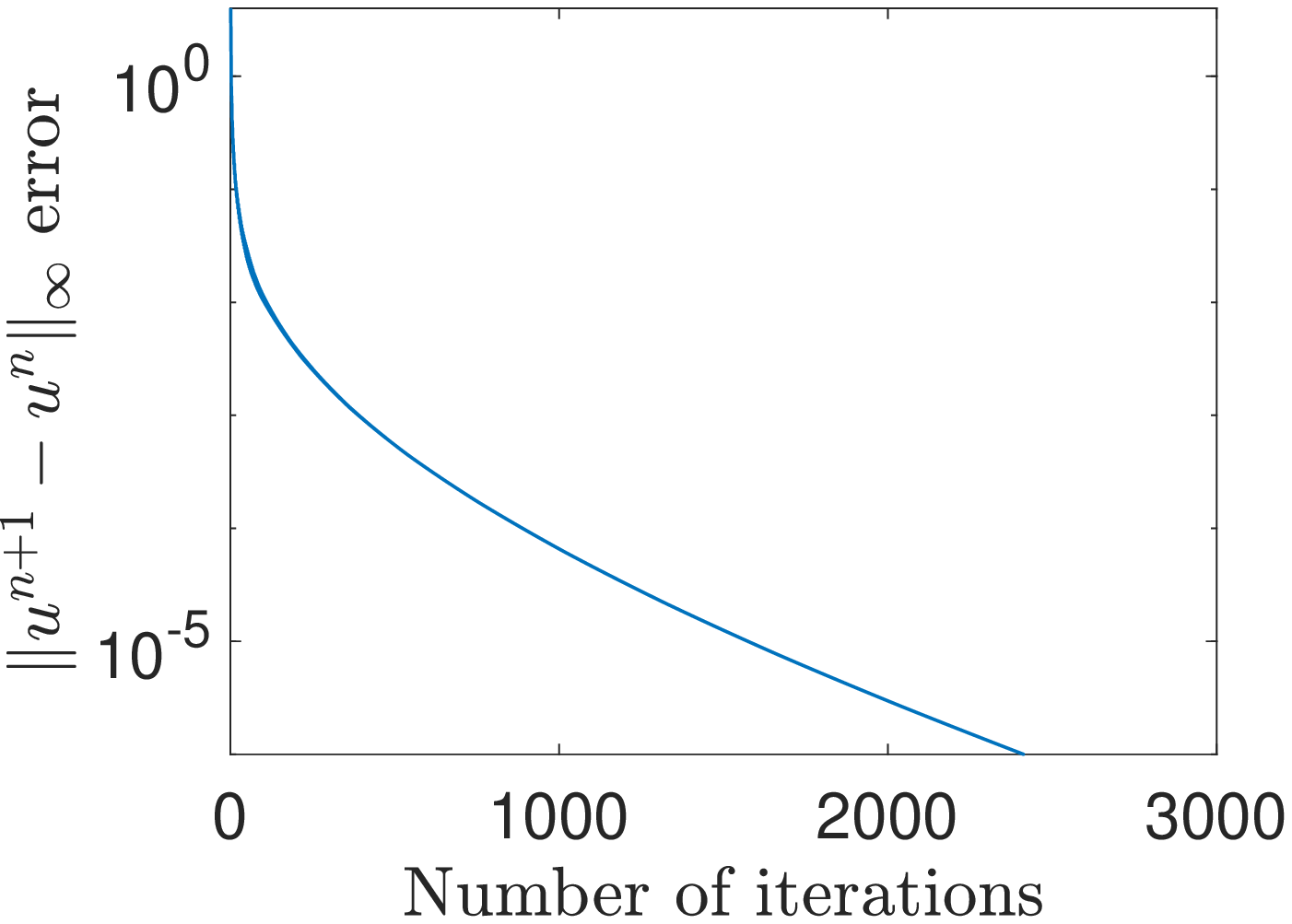}
	\end{tabular}
	\caption{(Two dimensional linear problem with obstacle $\psi_4$ and $\Delta x=1/256$.) (a) Graph of the obstacle $\psi_4$. (b) Graph of the numerical solution by Algorithm \ref{alg.linear}. (c) Contour of the numerical solution. (d) History of the error $\|u^{n+1}-u^n\|_{\infty}$.}
	\label{fig.linear.psi4}
\end{figure}

\begin{figure}[t!]
	\begin{tabular}{cc}
		(a) &(b)\\
		\includegraphics[width=0.4\textwidth]{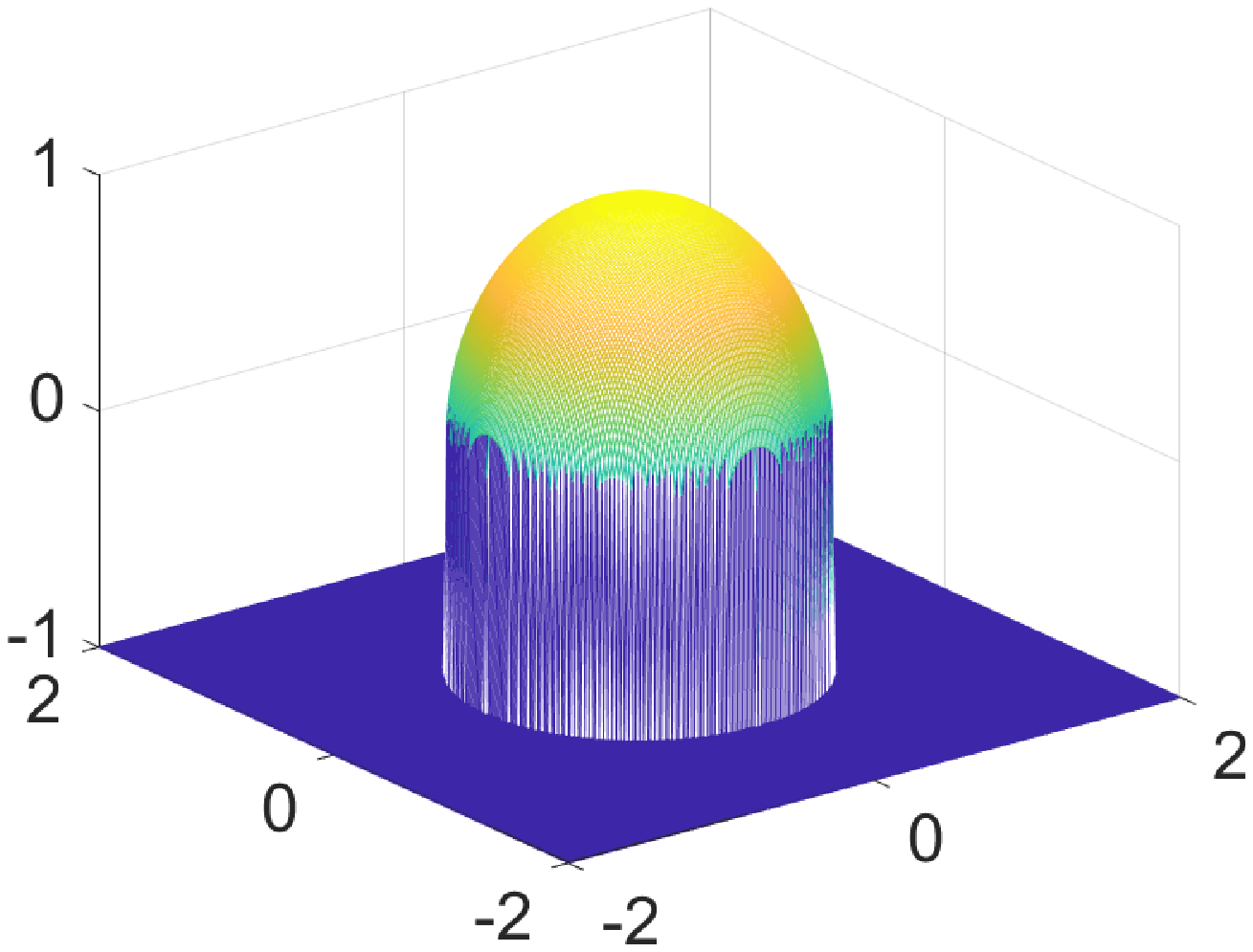} & 
		\includegraphics[width=0.4\textwidth]{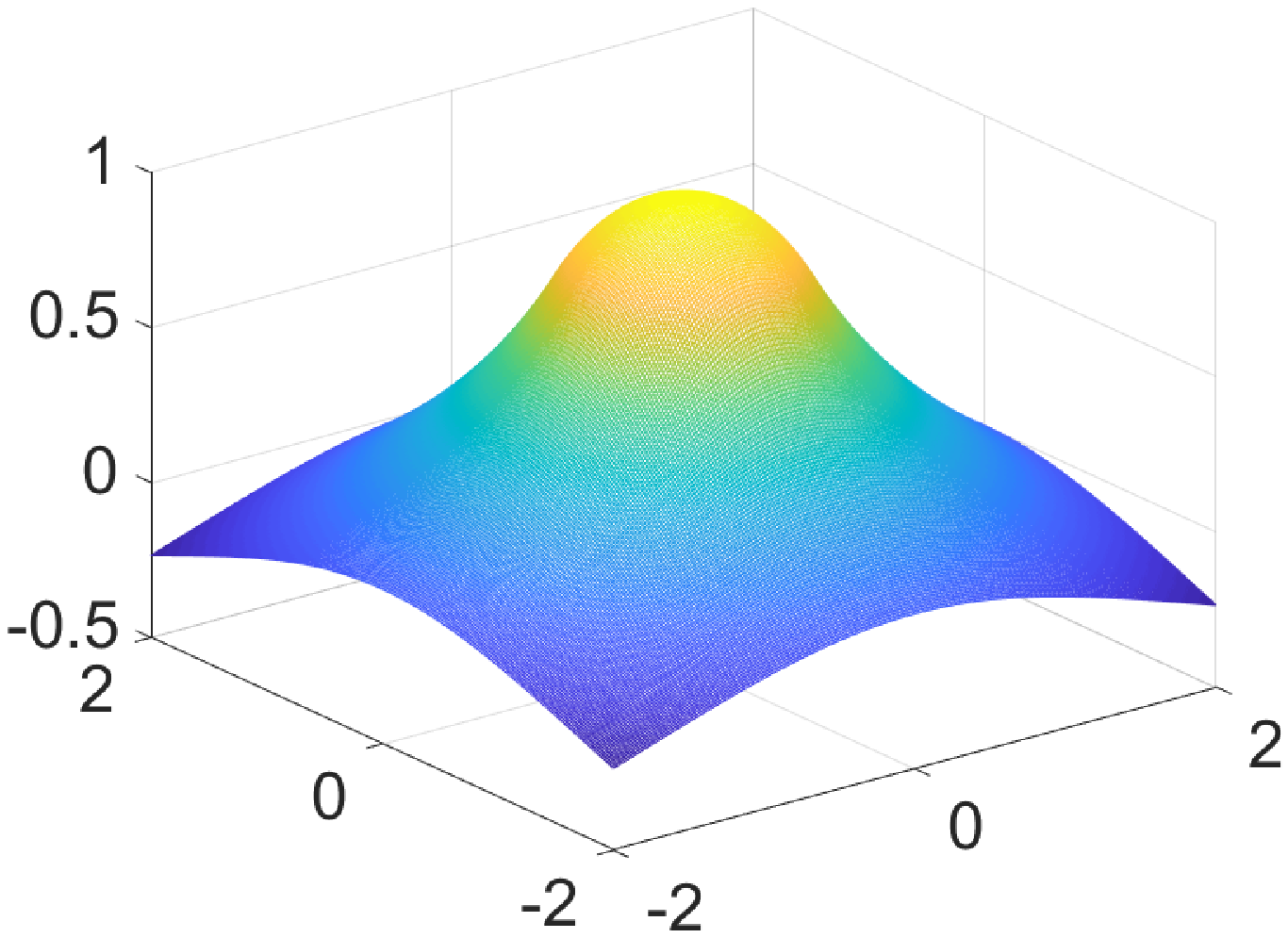}\\
		(c) & (d)\\
		\includegraphics[width=0.4\textwidth]{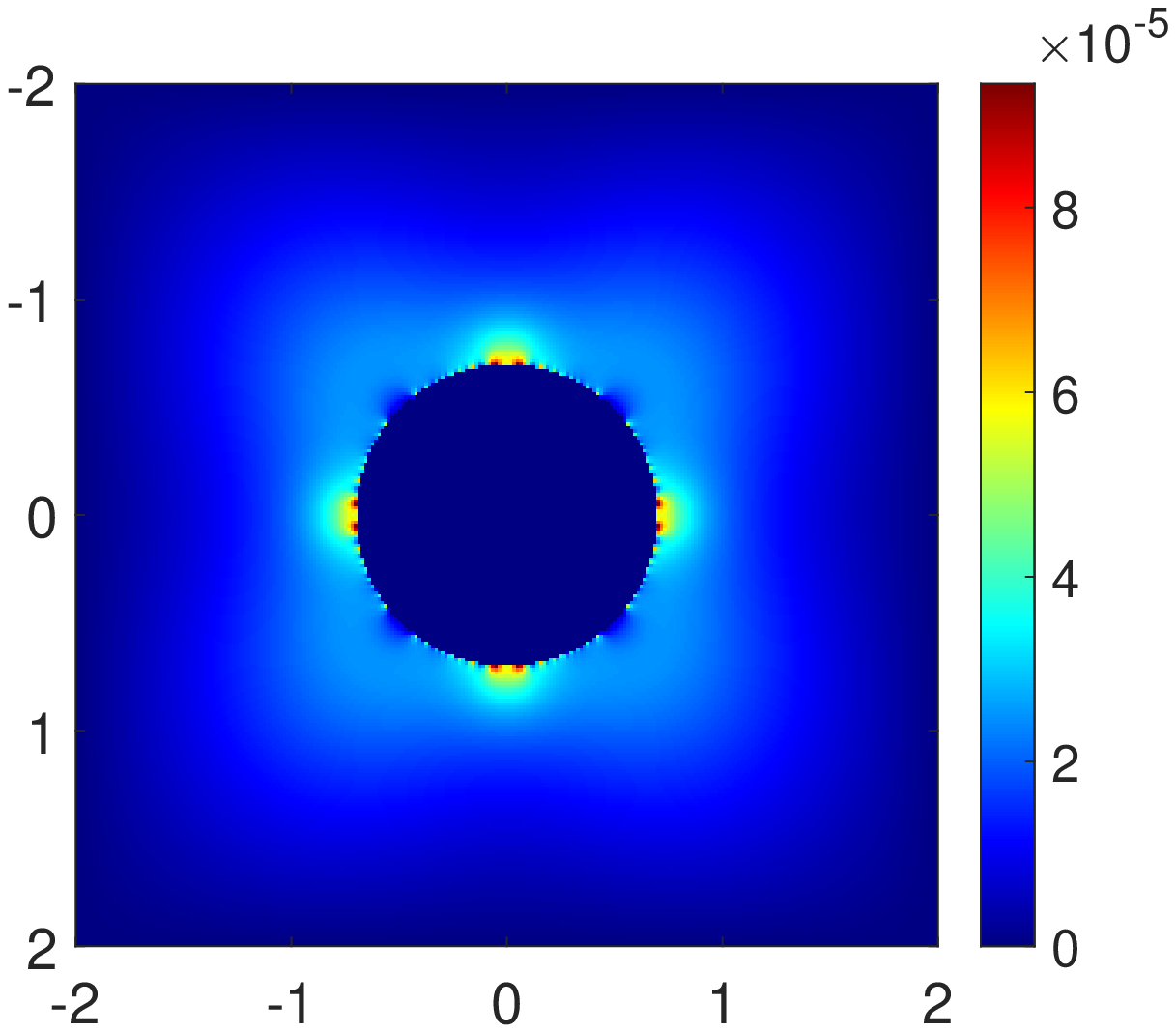} & 
		\includegraphics[width=0.4\textwidth]{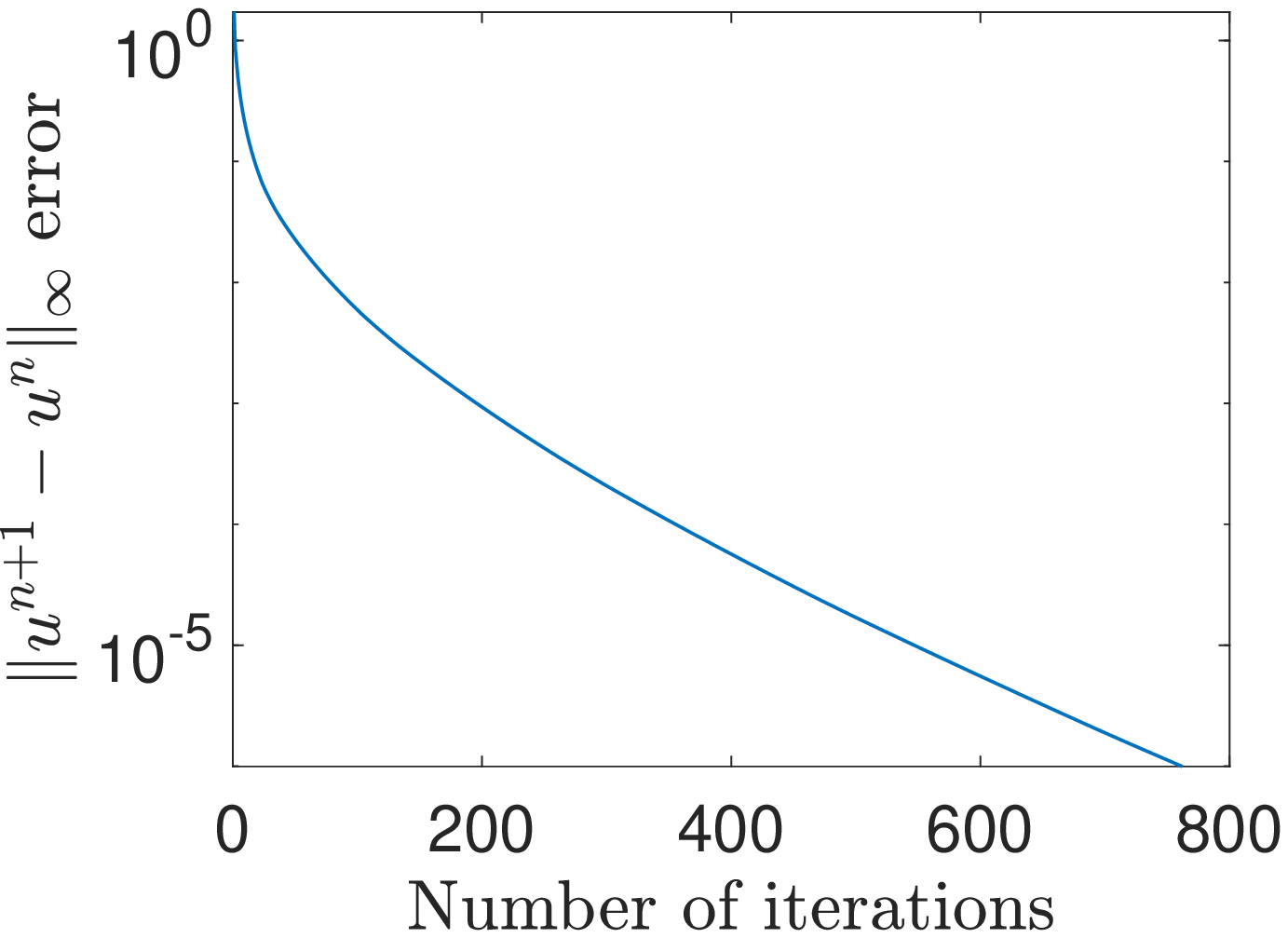}
	\end{tabular}
	\caption{(Two dimensional linear problem with $\Delta x=\Delta y=1/256$.) (a) Graph of the obstacle $\psi_5$. (b) Graph of the numerical solution by Algorithm \ref{alg.linear}. (c) Color map of the error $|u^n-u^*|$. (d) The history of the error $\|u^{n+1}-u^n\|_{\infty}$.}
	\label{fig.linear2d.psi5}
\end{figure}

\begin{table}[t!]
	\centering
	(a) \\
	\begin{tabular}{c||c|c|c}
		\hline
		$\Delta x$& Algorithm \ref{alg.linear} & PD & SB\\
		\hline
		1/32 & $4.94\times 10^{-3} $  & $4.92\times 10^{-3} $  & $4.94\times 10^{-3} $ \\
		\hline
		1/64 &  $5.85\times 10^{-4}$  & $5.55\times 10^{-4}$  &  $5.85\times 10^{-4}$  \\
		\hline
		1/128 & $1.89\times 10^{-4}$  & $1.93\times 10^{-4}$  & --\\
		\hline
		1/256 &  $5.23\times 10^{-5}$  & $5.33\times 10^{-5}$ & -- \\
		\hline
		Order &  2.19&  2.17  & -- \\
		\hline
	\end{tabular}
	\vspace{0.2cm}
	
	(b) \\
	\begin{tabular}{c||c|c|c}
		\hline
		$\Delta x$& Algorithm \ref{alg.linear} & PD & SB\\
		\hline
		1/32 &   $5.75\times 10^{-3}$ & $5.74\times 10^{-3}$ & $5.75\times 10^{-3}$\\
		\hline
		1/64 &   $5.99\times 10^{-4}$ &  $5.98\times 10^{-4}$ &   $5.99\times 10^{-4}$ \\
		\hline
		1/128 &  $2.15\times 10^{-4}$ &  $2.16\times 10^{-4}$ & --\\
		\hline
		1/256 &   $9.34\times 10^{-5}$ &  $9.34\times 10^{-5}$ &  --\\
		\hline
		Order &   1.98 &  1.98 &  --\\
		\hline
	\end{tabular}
	
	\caption{\label{tab.linear2d.err} (Two dimensional linear problem with obstacle $\psi_5$.)
		Comparison of (a) the $L^2$ error and (b) the $L^{\infty}$ error of the propose algorithm, the primal-dual method in \cite{zosso2017efficient} and the split Bregman method in \cite{tran20151}. Second order convergence is observed. }
\end{table}

\begin{table}[t!]
	\centering
	\begin{tabular}{c||cc|cc|cc}
		\hline
		$\Delta x$ & \multicolumn{2}{c|}{Algorithm \ref{alg.linear}} & \multicolumn{2}{c|}{PD}  &  \multicolumn{2}{c}{SB}\\
		\hline
		1/32 &  209 &(0.0082)  & 1401 &(0.0391) & 203 &(1.6712)\\
		\hline
		1/64 &  405 &(0.0801)  & 2124 &(0.2380) & 532 &(112.4426)\\
		\hline
		1/128 & 776 &(0.5667)   & 3054 &(1.1650) & \multicolumn{2}{c}{--}\\
		\hline
		1/256 & 1484 &(4.1424)   & 3436 &(8.8987) & \multicolumn{2}{c}{--}\\
		\hline
	\end{tabular}
	\caption{\label{tab.linear2d.time} (Two dimensional linear problem with obstacle $\psi_5$.) Comparison of the number of iterations (CPU time in seconds) required to satisfy the stopping criterion of the propose algorithm, the primal-dual method in \cite{zosso2017efficient} and the split Bregman method in \cite{tran20151}.}
\end{table}

\subsection{Two dimensional examples of the linear obstacle problem} 
We test Algorithm \ref{alg.linear} on two dimensional obstacles. The first experiment considers computational domain $[0,1]^2$ and the obstacle with disjoint bumps 
\begin{align}
	\psi_4(x,y)=\begin{cases}
		5 & \mbox{ for } |x-0.5|+|y-0.6|<0.04,\\
		4.5 &\mbox{ for } (x-0.6)^2+(y-0.25)^2<0.001,\\
		4.5 & \mbox{ for } y=0.57 \mbox{ and } 0.075<x<0.13,\\
		0 &\mbox{ otherwise}.
	\end{cases}
\label{eq.obs4}
\end{align}
The obstacle is visualized in Figure \ref{fig.linear.psi4}(a). With $\Delta x=1/256$ and zero boundary condition, our numerical solution by Algorithm \ref{alg.linear} and its contour is shown in (b) and (c), respectively. The numerical solution is smooth away from the obstacle and contacts the obstacle over the support of the obstacle. We present the history of the error $\|u^{n+1}-u^n\|_{\infty}$ in (d).

We then test Algorithm \ref{alg.linear} on an example with known exact solution.
 Consider the computational domain $[-2,2]^2$. We define the obstacle as
\begin{align}
	\psi_5(x,y)= \begin{cases}
		\sqrt{1-x^2-y^2} & \mbox{ for } \sqrt{x^2+y^2}\leq 1,\\
		-1 & \mbox{ otherwise}.
	\end{cases}
\end{align}
The exact solution is given as
\begin{align}
	u^*(x,y)=\begin{cases}
		\sqrt{1-x^2-y^2} & \mbox{ for } \sqrt{x^2+y^2}\leq r^* ,\\
		-(r^*)^2\log(r/2)/\sqrt{1-(r^*)^2} & \mbox{ for } \sqrt{x^2+y^2}\geq r^*,
	\end{cases}
\end{align}
where $r^*$ is the solution of
$$
(r^*)^2(1-\log(r^*/2))=1.
$$
One has $r^*=0.6979651482...$ \cite{zosso2017efficient}. In this experiment, we set the boundary condition as the boundary value of $u^*$, and $\Delta t=\Delta x$. With $\Delta x=1/256$, the graph of the obstacle and our numerical solution are shown in Figure \ref{fig.linear2d.psi5} (a) and (b), respectively. In (c), we present the color map of the error $|u^n-u^*|$, from which we observe that the error is in the order of $10^{-5}$ and concentrates around the contacting region. The history of the error $\|u^{n+1}-u^n\|_{\infty}$ is presented in (d).

We then compare Algorithm \ref{alg.linear} with PD and SB on this example. For Algorithm \ref{alg.linear} and PD, we test on grid resolution from $32\times 32$ to $256\times 256$. For SB, since solving a linear system in each iteration is very time consuming on fine grids, we test it on grid resolution up to $64\times 64$. In Table \ref{tab.linear2d.err}, we compare the $L^2$ errors and $L^{\infty}$ errors. All methods provides results with similar errors. Second order convergence is observed. We then compare the efficiency of these methods in Table \ref{tab.linear2d.time}, in which the number of iterations and CPU time required for convergence are reported. The same to our observation in the previous subsection, Algorithm \ref{alg.linear} is the most efficient.

\begin{figure}[t!]
	\centering
	\begin{tabular}{ccc}
		(a) & (b) & (c)\\
		\includegraphics[width=0.3\textwidth]{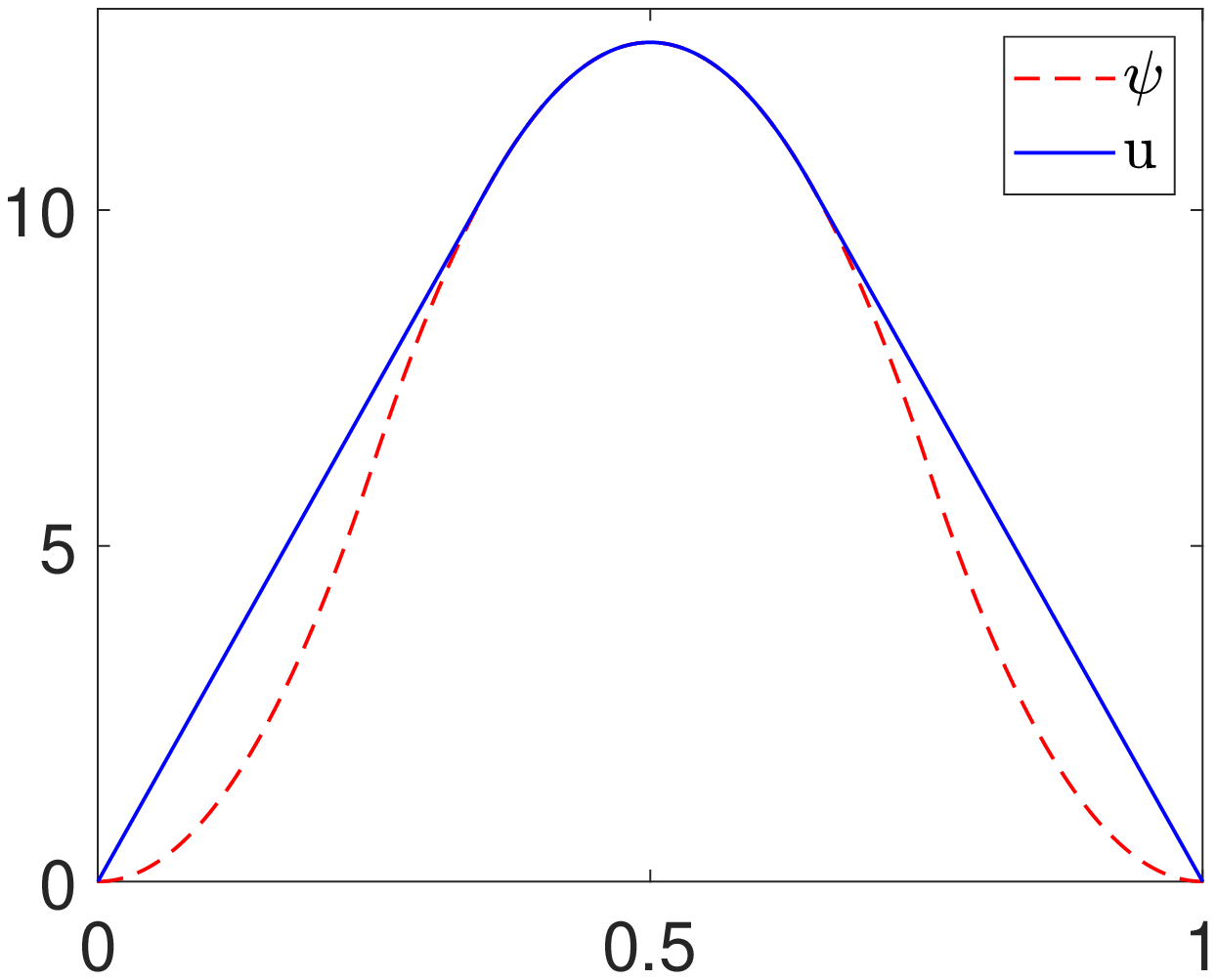} & 
		\includegraphics[width=0.3\textwidth]{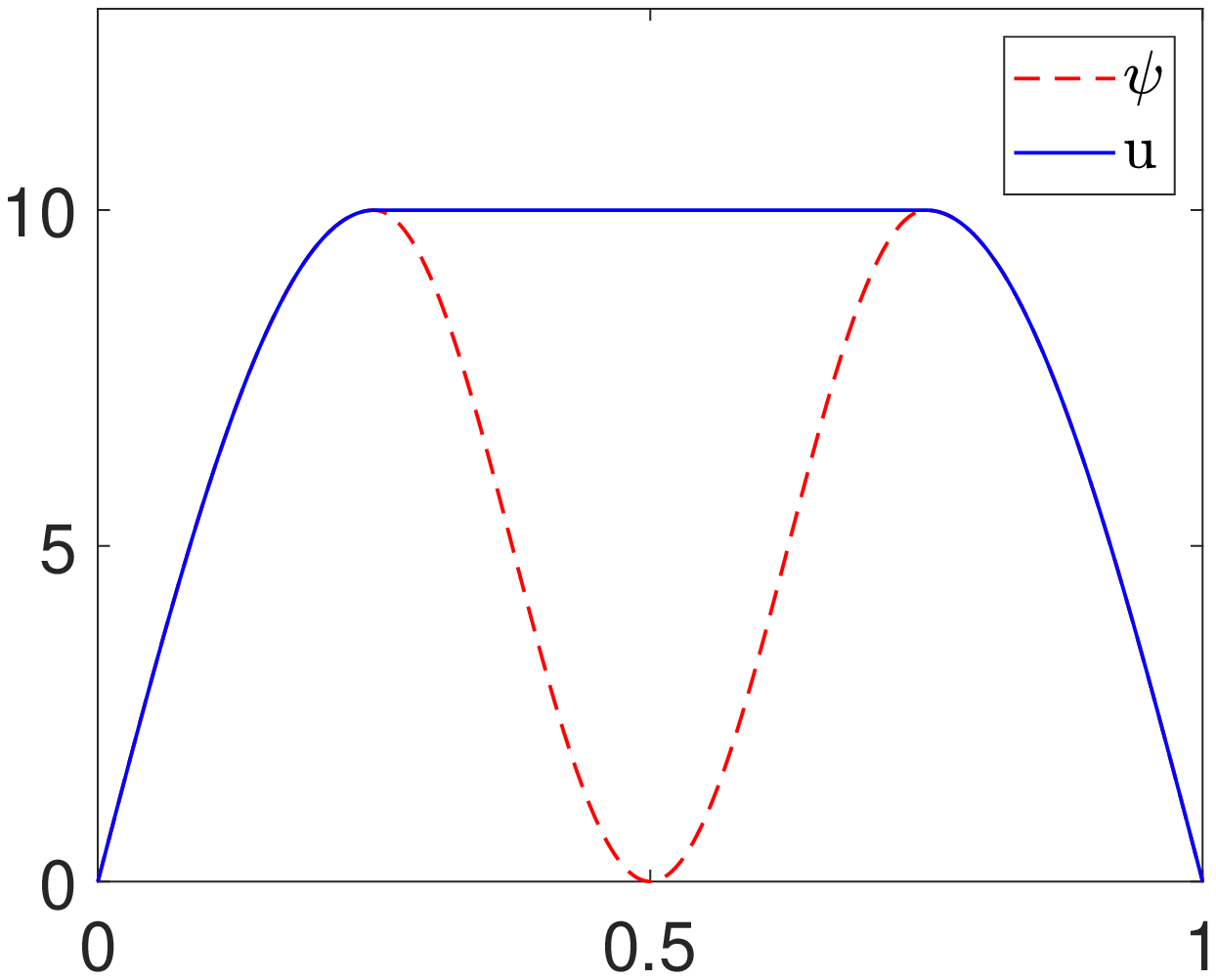} & 
		\includegraphics[width=0.3\textwidth]{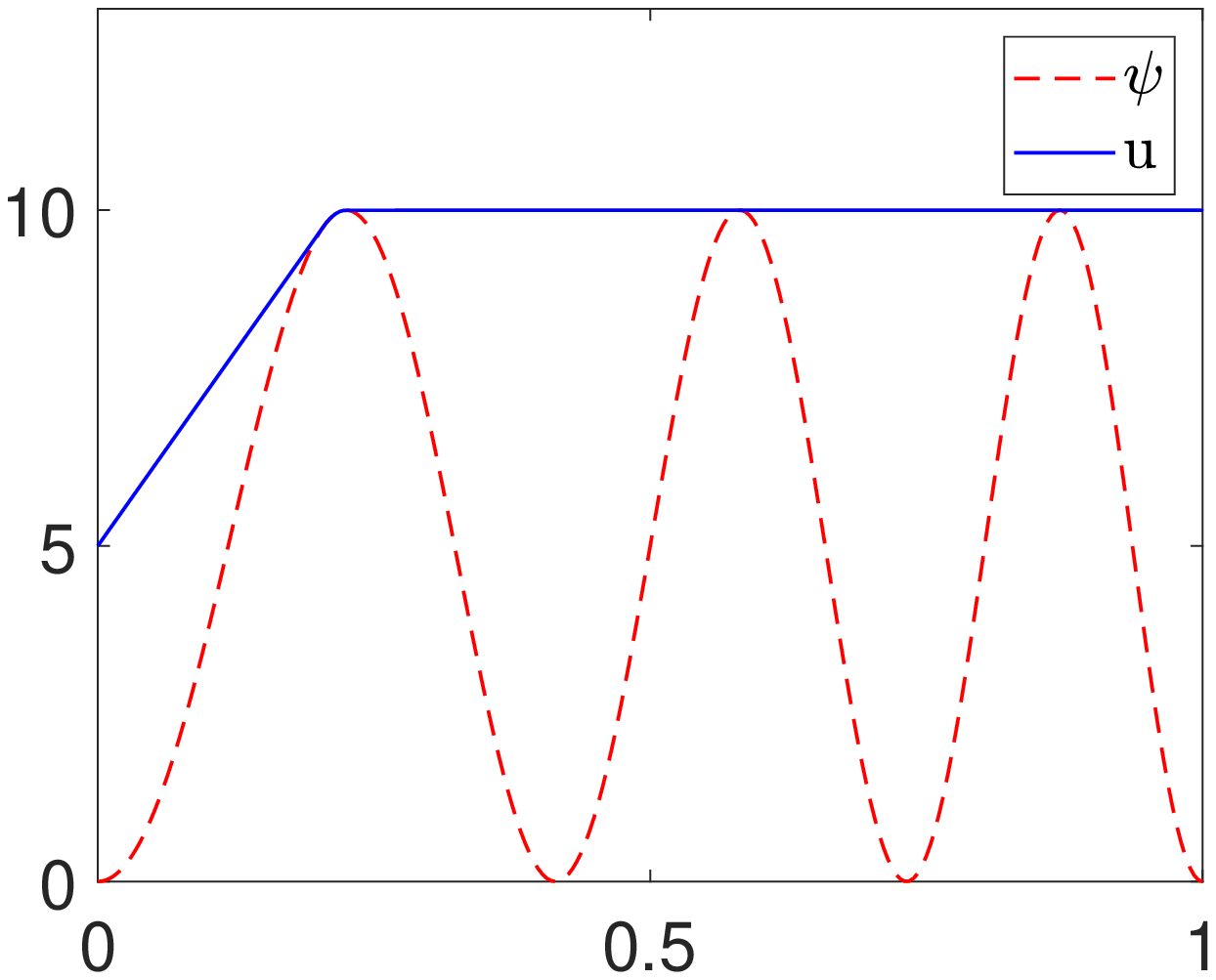}\\
		\includegraphics[width=0.3\textwidth]{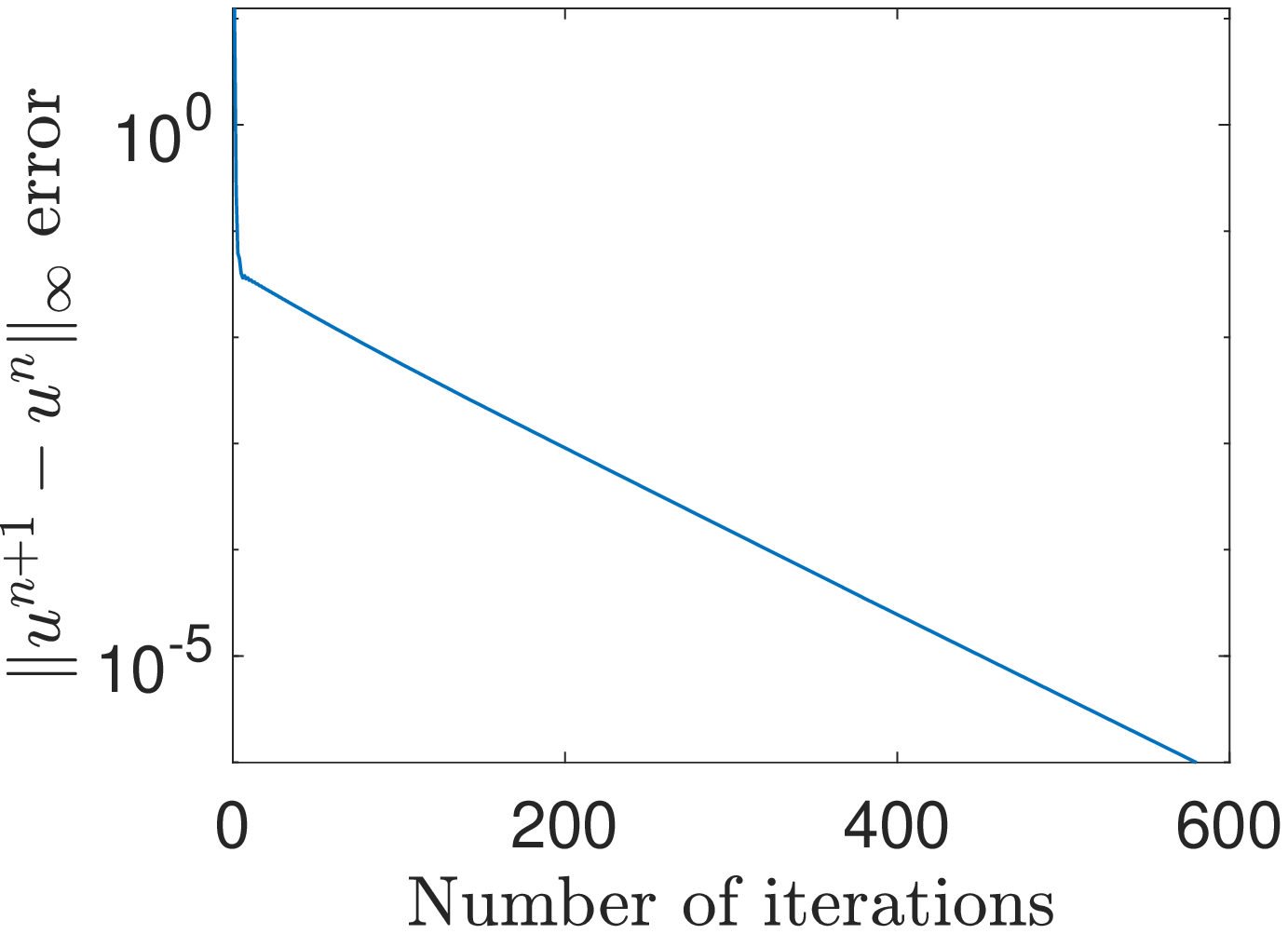} &
		\includegraphics[width=0.3\textwidth]{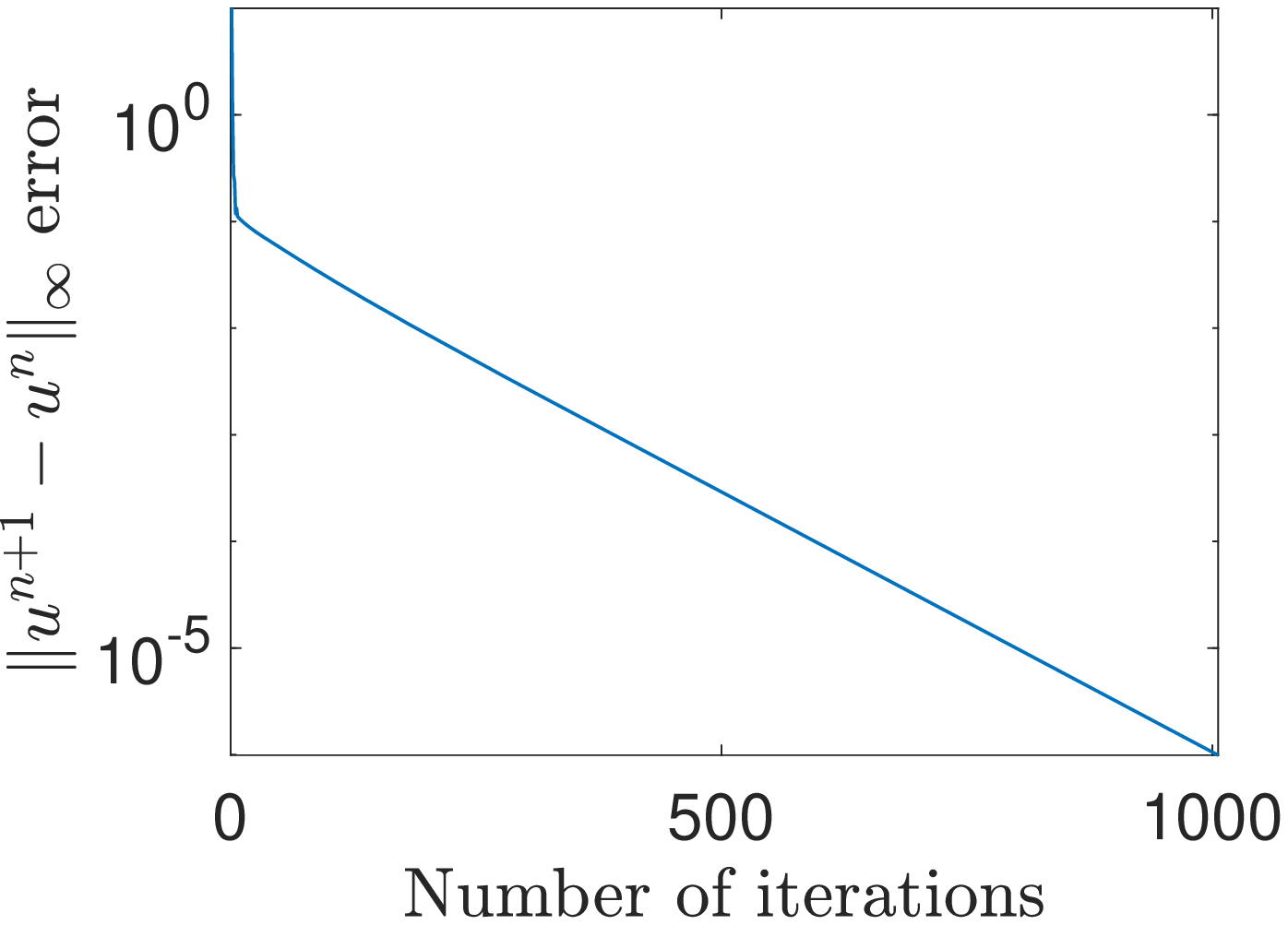} &
		\includegraphics[width=0.3\textwidth]{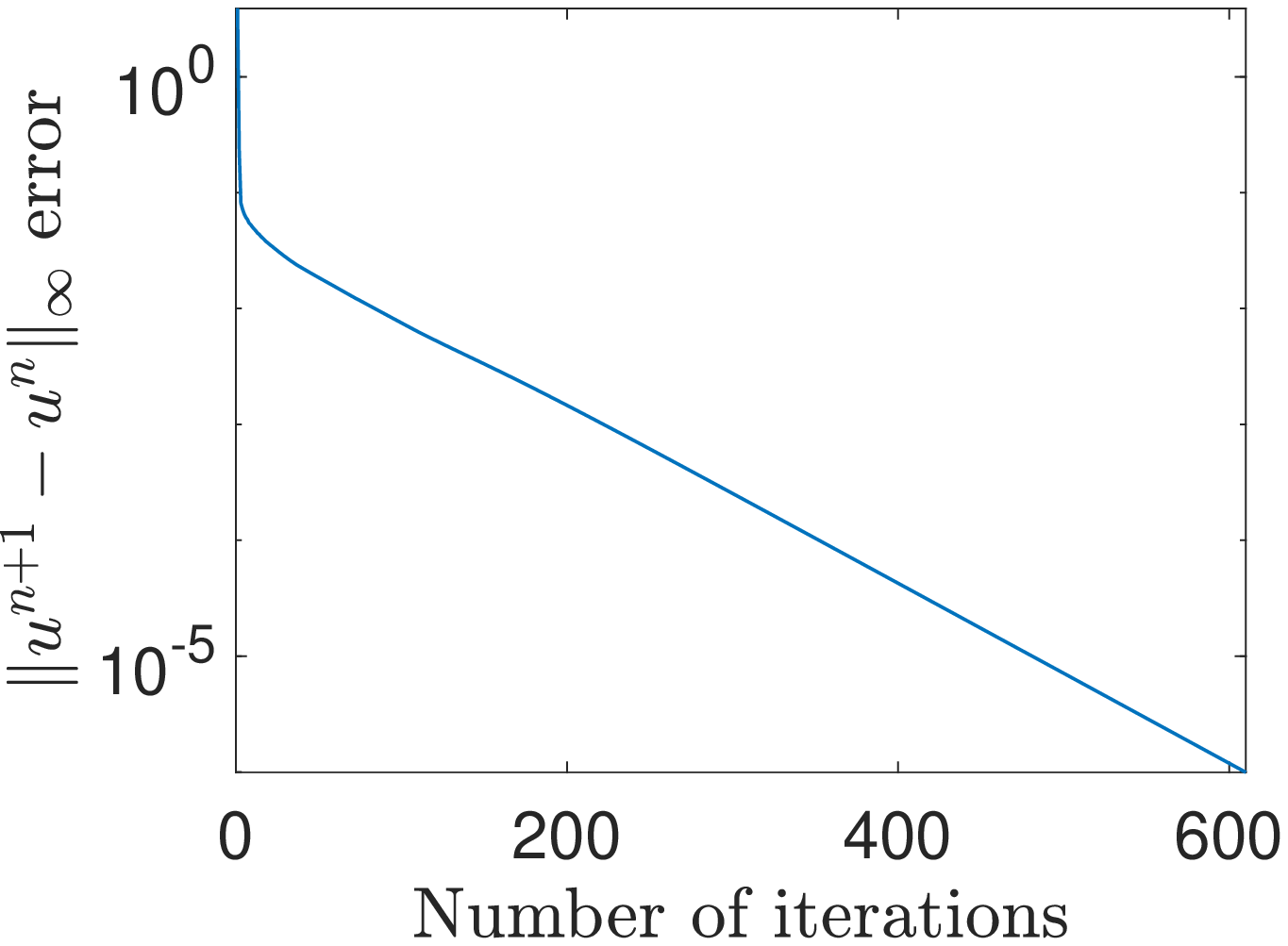} 
	\end{tabular}
	\caption{(One dimensional nonlinear problem with $\Delta x=1/256$.) Column (a)-(c) correspond to obstacles $\psi_1,\ \psi_2,\ \psi_3$, respectively. First row: Graph of the obstacle and the numerical solution by Algorithm \ref{alg.nonlinear}. Second row: Histories of the error $\|u^{n+1}-u^n\|_{\infty}$ .}
	\label{fig.nonlinear1d.general}
\end{figure}

\begin{figure}[t!]
	\centering
	\begin{tabular}{ccc}
		(a) & (b) & (c)\\
		\includegraphics[width=0.3\textwidth]{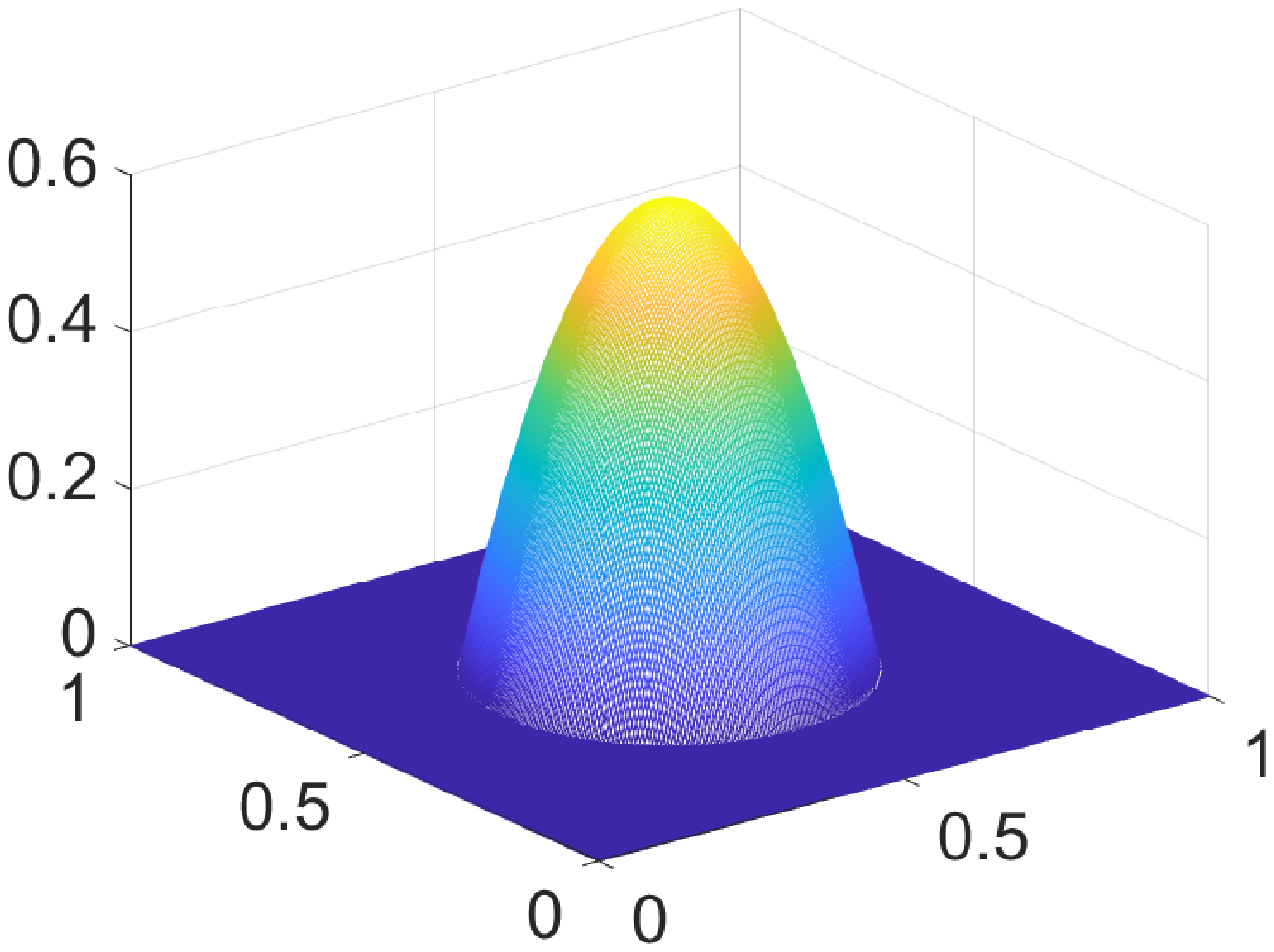} & 
		\includegraphics[width=0.3\textwidth]{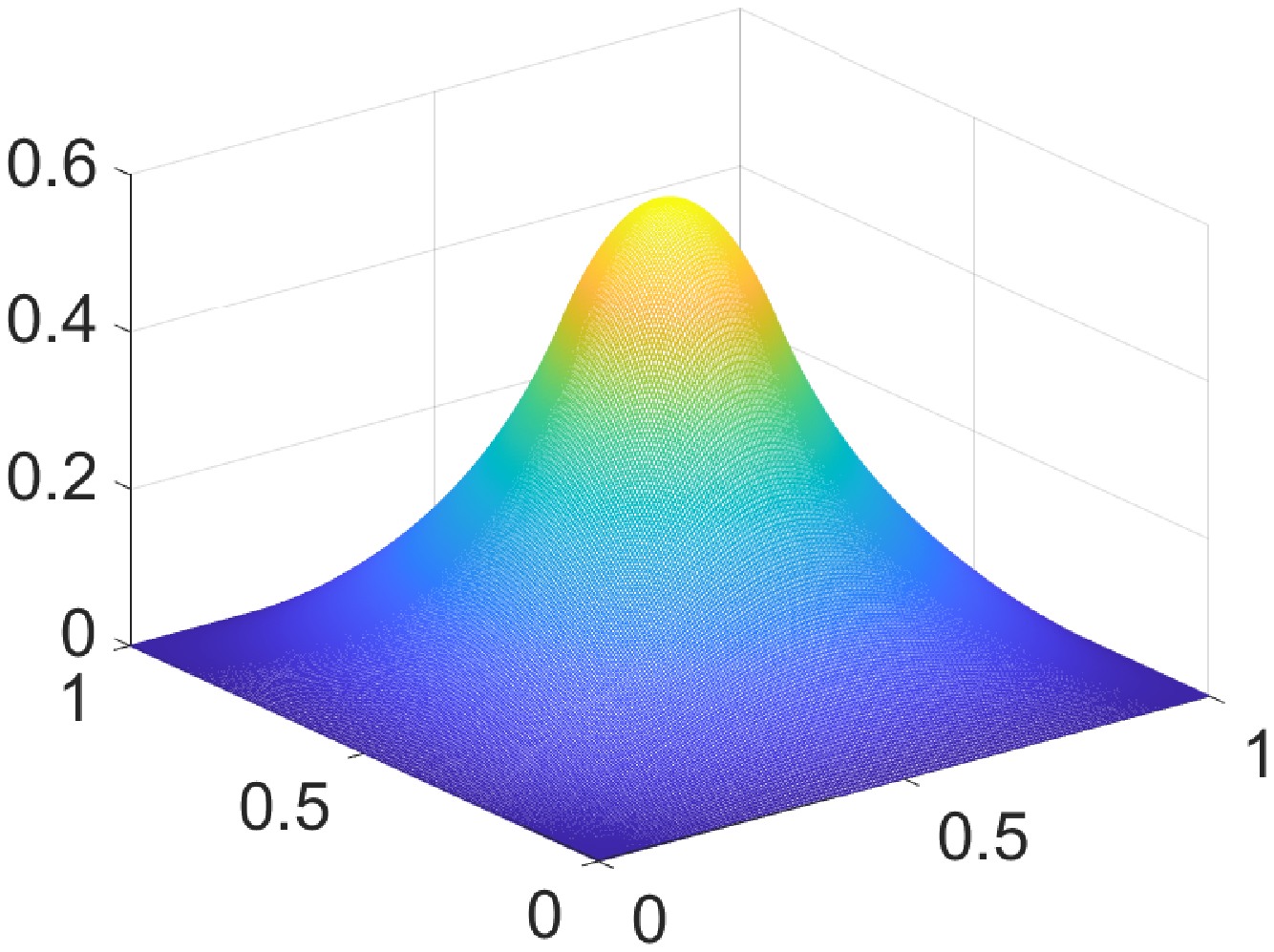} & 
		\includegraphics[width=0.3\textwidth]{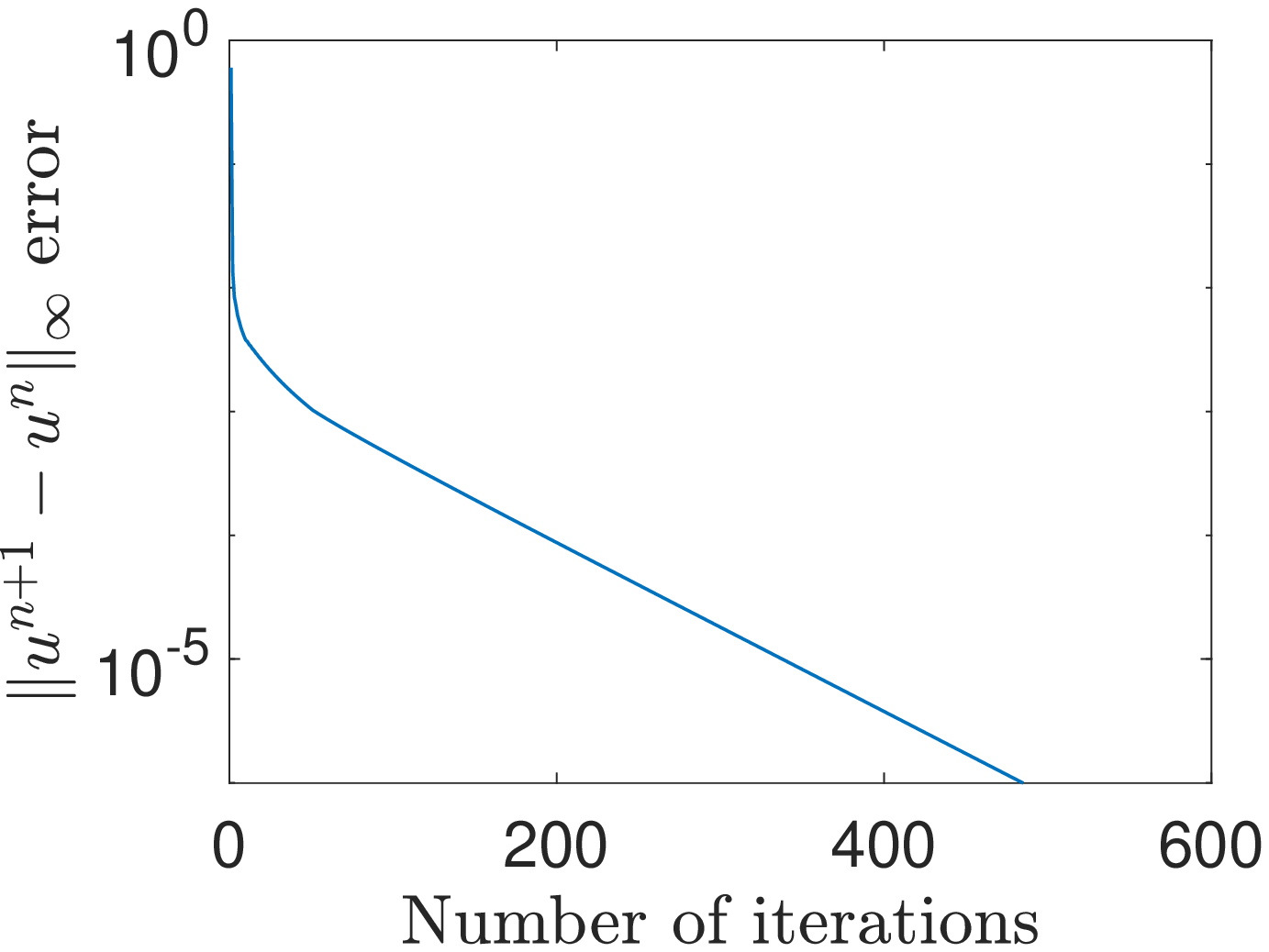}
	\end{tabular}
	\caption{(One dimensional nonlinear problem with $\Delta x=1/256$.) Column (a)-(c) correspond to obstacles $\psi_1,\ \psi_2,\ \psi_3$, respectively. First row: Graph of the obstacle and the numerical solution by Algorithm \ref{alg.nonlinear}. Second row: Histories of the error $\|u^{n+1}-u^n\|_{\infty}$ .}
	\label{fig.nonlinear2d.1}
\end{figure}

\begin{figure}[t!]
\centering
	\begin{tabular}{cc}
		(a) & (b)\\
		\includegraphics[width=0.4\textwidth]{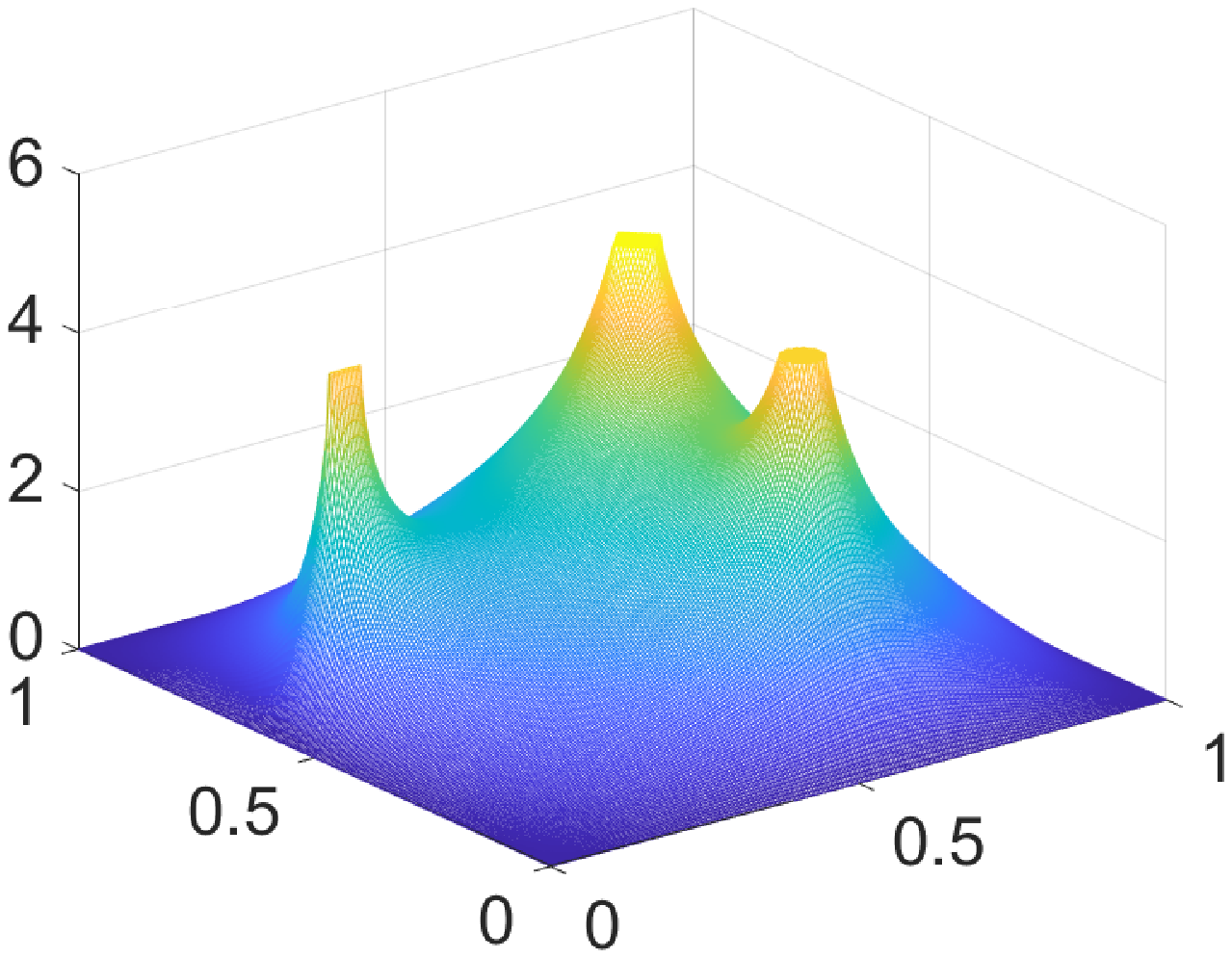} & 
		\includegraphics[width=0.4\textwidth]{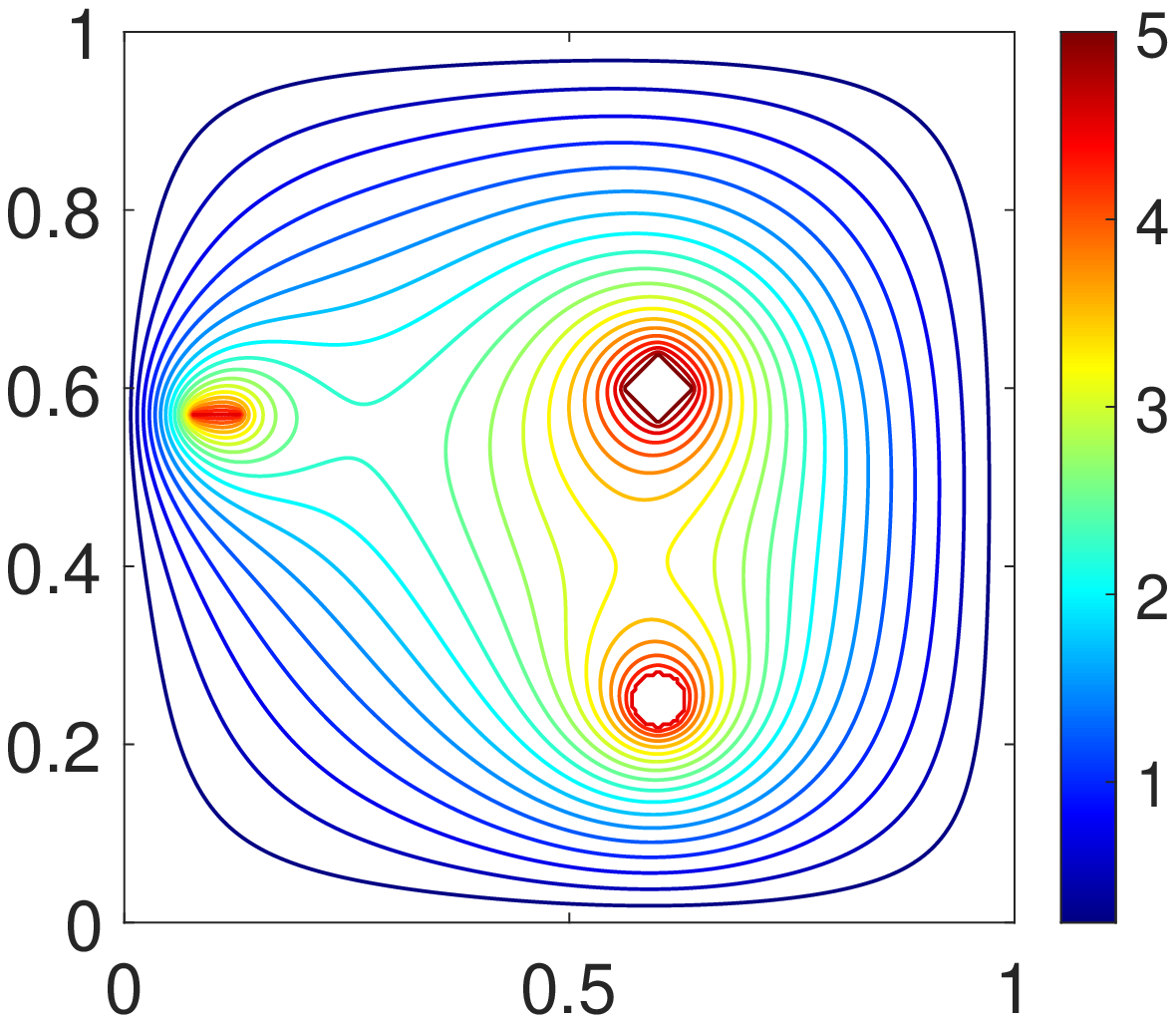}
	\end{tabular}
	\caption{(Two dimensional nonlinear problem with obstacle $\psi_4$ and $\Delta x=1/256$.) (a) Graph of our numerical solution by Algorithm \ref{alg.nonlinear}. (b) Contour of (a).}
	\label{fig.nonlinear2d.2}
\end{figure}

\subsection{Nonlinear obstacle problem} 

We now apply Algorithm \ref{alg.nonlinear} to both one-dimensional and two-dimensional nonlinear problems. For one-dimensional problems, the solutions of nonlinear problems and linear problems are the same. For all experiments in this subsection, we set $\Delta t=10\Delta x, \alpha=0.01$. We test Algorithm \ref{alg.nonlinear} on $\psi_1,\psi_2$ and $\psi_3$ defined in Section \ref{sec.numerical.linear.1D}. We list the results with $\Delta x=1/256$ in Figure \ref{fig.nonlinear1d.general} and the first row shows the obstacles and solutions obtained from Algorithm \ref{alg.nonlinear}. Histories of the error $\|u^{n+1}-u^n\|_{\infty}$ are shown in the second row. Similar to our results of linear problems, the numerical solutions are linear away from the contacting region. Linear convergence is observed.

We test Algorithm \ref{alg.nonlinear} on two two-dimensional problems with computational domain $[0,1]^2$. The first obstacle is defined as
\begin{align}
	\psi_6=\max(0,0.6-8((x-0.5)^2+y-0.5)^2).
\end{align}
With $\Delta x=1/256$, the numerical result is shown in Figure \ref{fig.nonlinear2d.1}. In Figure \ref{fig.nonlinear2d.1}, (a) shows the graph of $\psi_6$, (b) shows the solution obtained from Algorithm \ref{alg.nonlinear}, (c) shows the history of the error $\|u^{n+1}-u^n\|_{\infty}$. For two-dimensional nonlinear problems, we also have linear convergence in $\|u^{n+1}-u^n\|_{\infty}$. 

We then consider obstacle $\psi_4$ defined in (\ref{eq.obs4}). The graph of the numerical solution and its contour are shown in Figure \ref{fig.nonlinear2d.2} (a) and (b), respectively. One can observe that the obtained result is smooth away from the support of the obstacle and contacts the obstacle over the support.

\begin{figure}[t!]
	\centering
	\begin{tabular}{cc}
		(a) & (b)\\
		\includegraphics[width=0.4\textwidth]{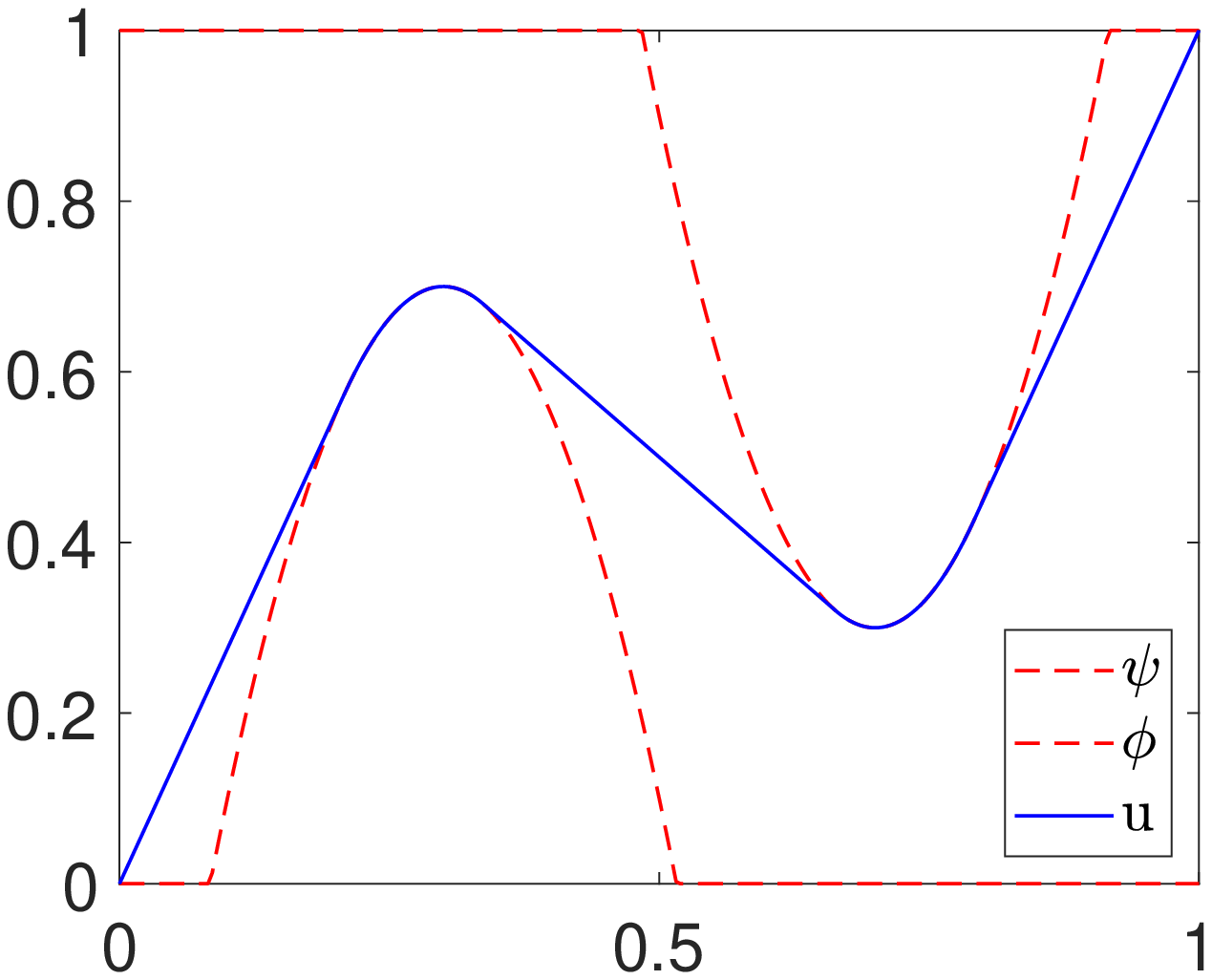} & 
		\includegraphics[width=0.4\textwidth]{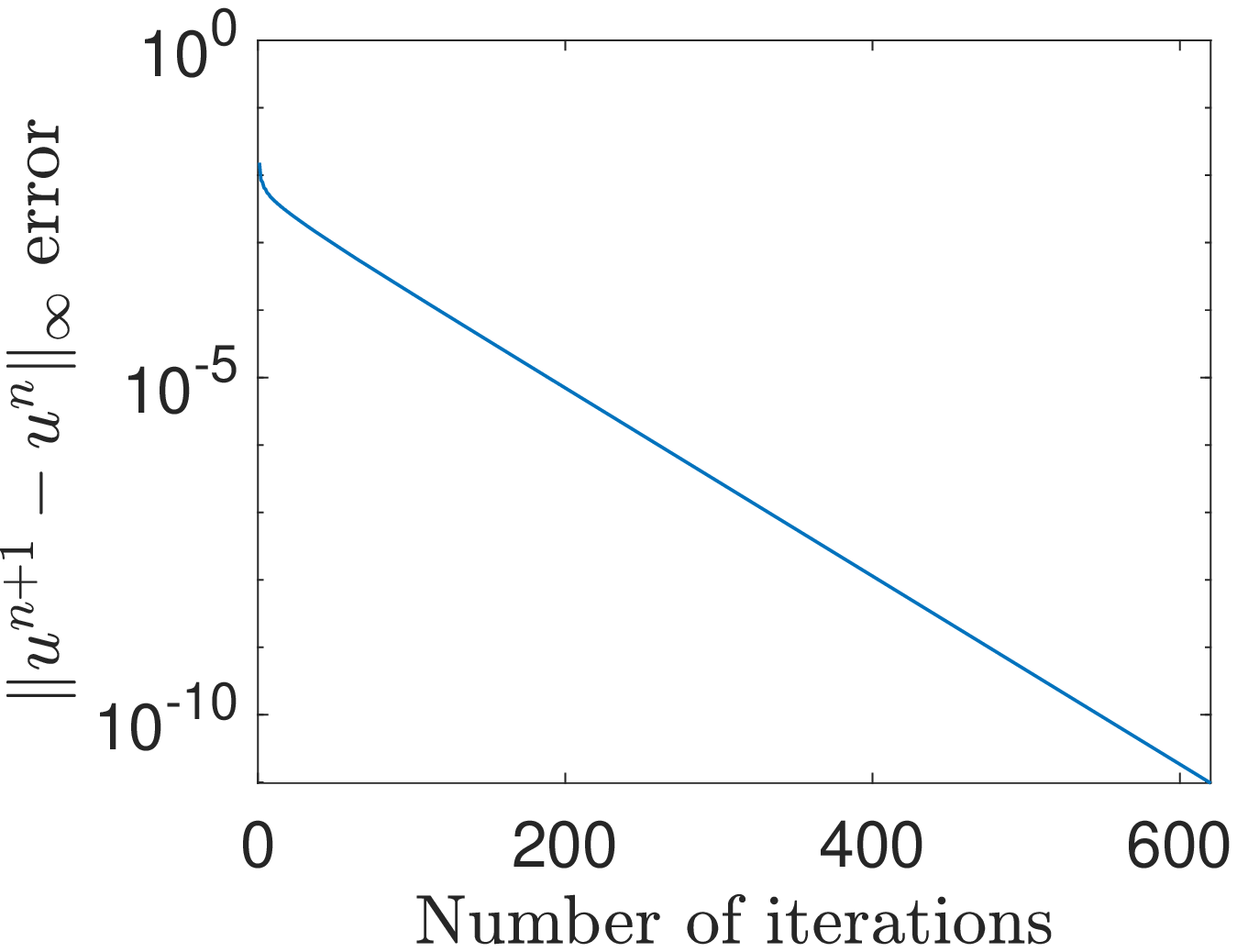}
	\end{tabular}
	\caption{(One-dimensional linear double obstacle problem.) (a) Graph of the obstacles and our numerical solution by the algorithm described in Section \ref{sec.double}. (b) History of the error $\|u^{n+1}-u^n\|_{\infty}$. Linear convergence is observed.}
	\label{fig.double.1D}
\end{figure}

\begin{figure}[t!]
	\centering
	\begin{tabular}{ccc}
		(a) & (b) & (c)\\
		\includegraphics[width=0.3\textwidth]{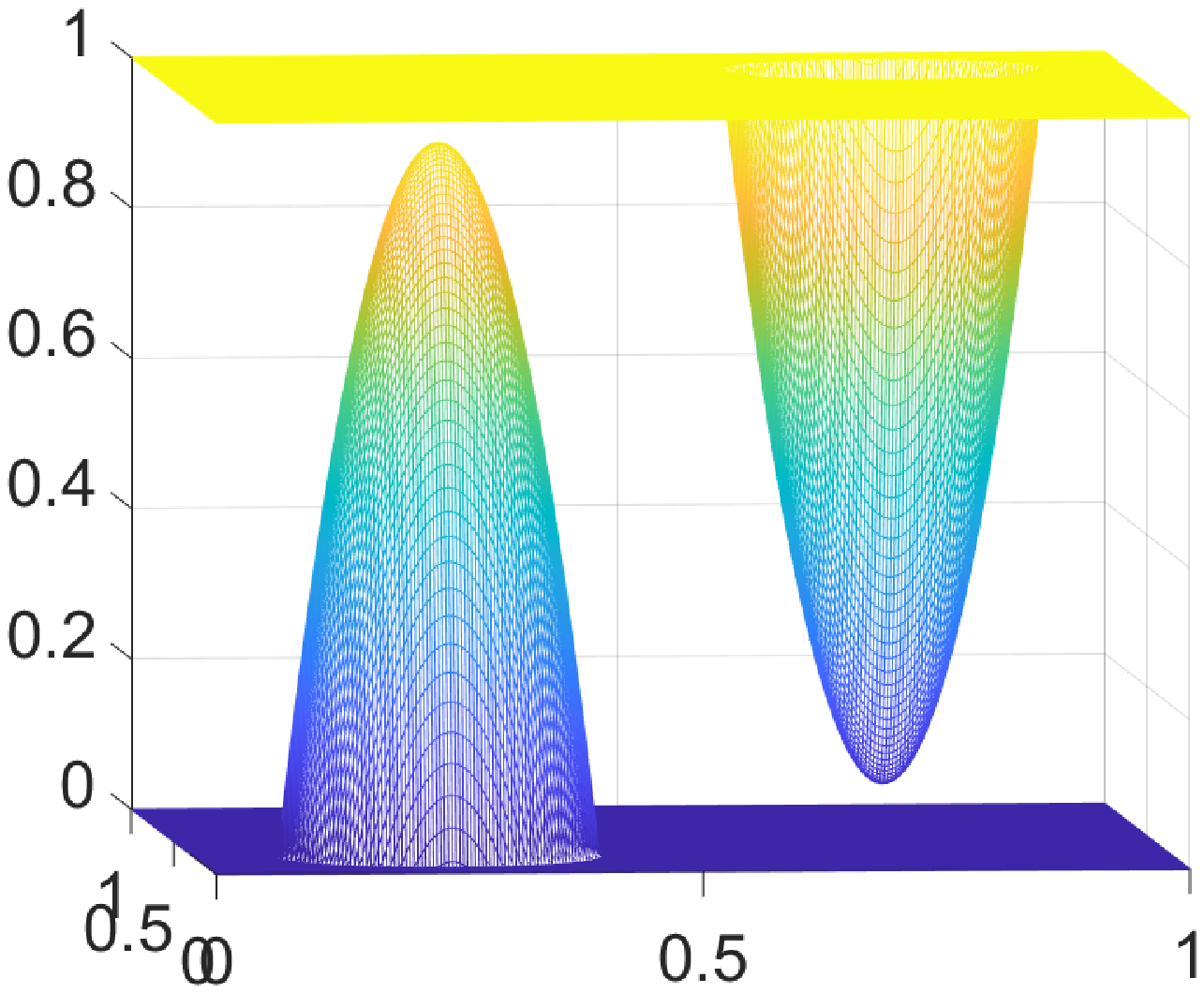} & 
		\includegraphics[width=0.3\textwidth]{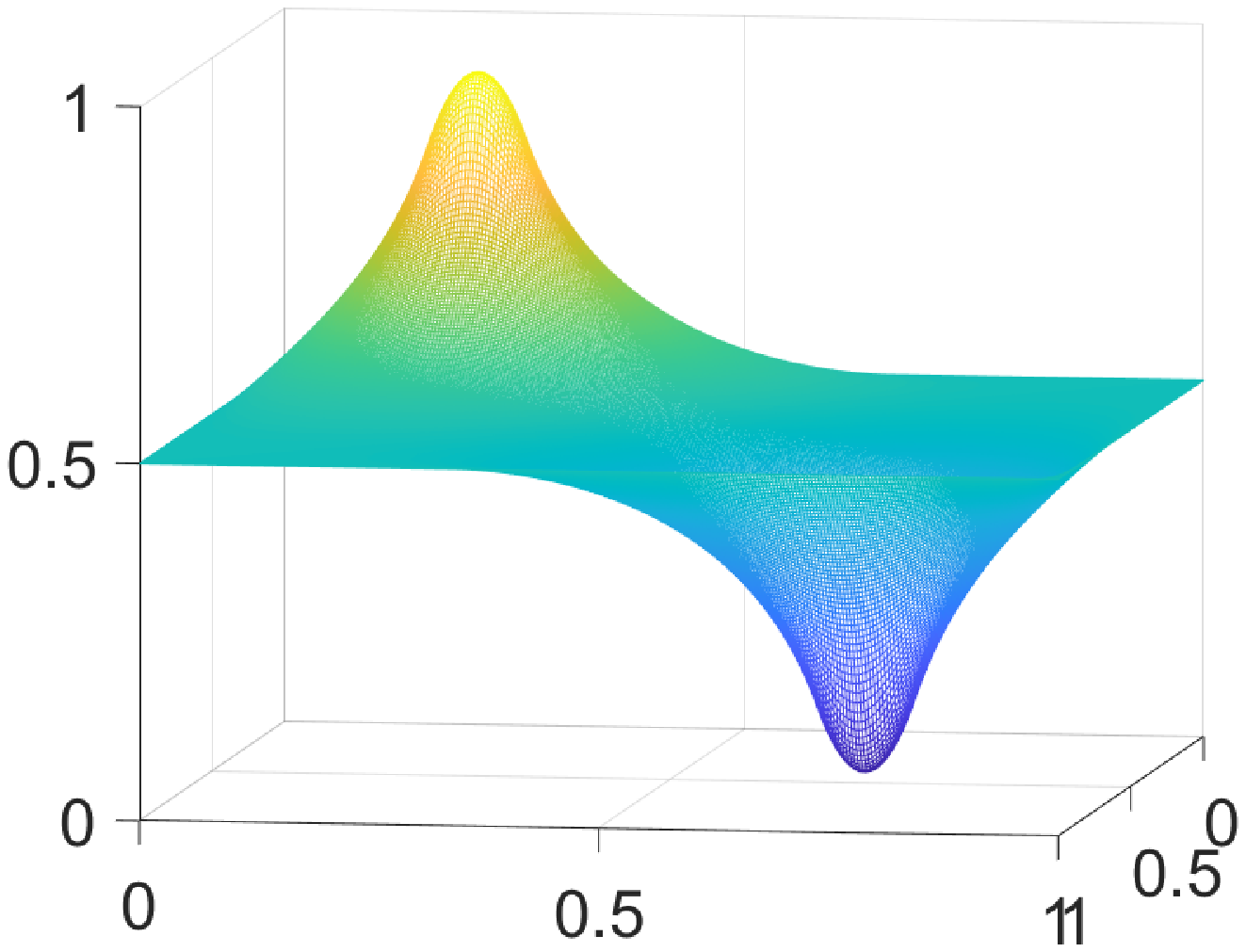}&
		\includegraphics[width=0.3\textwidth]{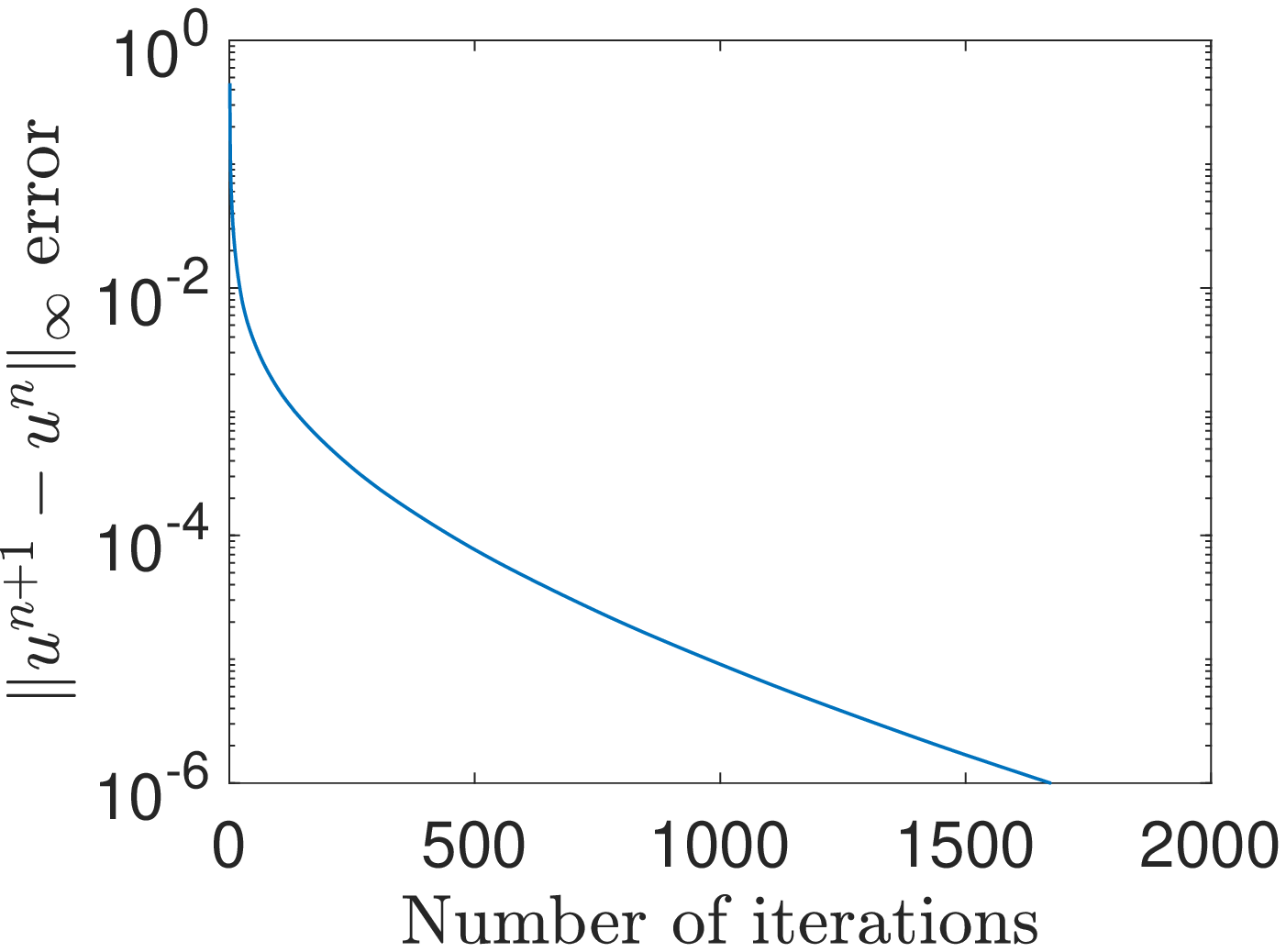}
	\end{tabular}
	\caption{(Two-dimensional linear double obstacle problem.) (a) Graph of the obstacles in (\ref{eq.numerical.double.2D}). (b) Graph of our numerical solution by the algorithm described in Section \ref{sec.double}. (c) History of the error $\|u^{n+1}-u^n\|_{\infty}$.}
	\label{fig.double.2D}
\end{figure}

\subsection{Double obstacle problem}
We test the proposed algorithm for a linear double obstacle problem on the computational domain $[0,1]$. We consider obstacles defined as
\begin{align}
	\begin{cases}
		\psi_7=\max\{0.7-15(x-0.3)^2,0\},\\ 
		\phi_7=\min\{15(x-0.7)^2+0.3,1\}.
	\end{cases}
\end{align}
The boundary condition $u(0)=0$ and $u(1)=1$ is used. The obstacle and our numerical result are shown in Figure \ref{fig.double.1D}. Our solution is linear away from the contacting region. The history of the error $\|u^{n+1}-u^n\|_{\infty}$ is shown in (b), from which linear convergence is observed.

We then consider a two-dimensional linear double obstacle problem. We use computational domain $\Omega=[0,1]^2$ and obstacles
\begin{align}
	\begin{cases}
		\psi_8=\max\{0,0.95-35((x-0.25)^2+(y-0.25)^2)\},\\
		\phi_8=\min\{1,35((x-0.75)^2+(y-0.75)^2)\}.
	\end{cases}
\label{eq.numerical.double.2D}
\end{align}
The obstacles are visualized in Figure \ref{fig.double.2D}(a). With boundary condition $u|_{\partial\Omega}=0.5$ and $\Delta x=1/256, \Delta t=\Delta x$, our numerical solution is shown in (b). The history of the error $\|u^{n+1}-u^n\|_{\infty}$ is shown in (c). 

\begin{figure}[t!]
	\centering
	\begin{tabular}{ccc}
		(a) & (b) & (c)\\
		\includegraphics[width=0.3\textwidth]{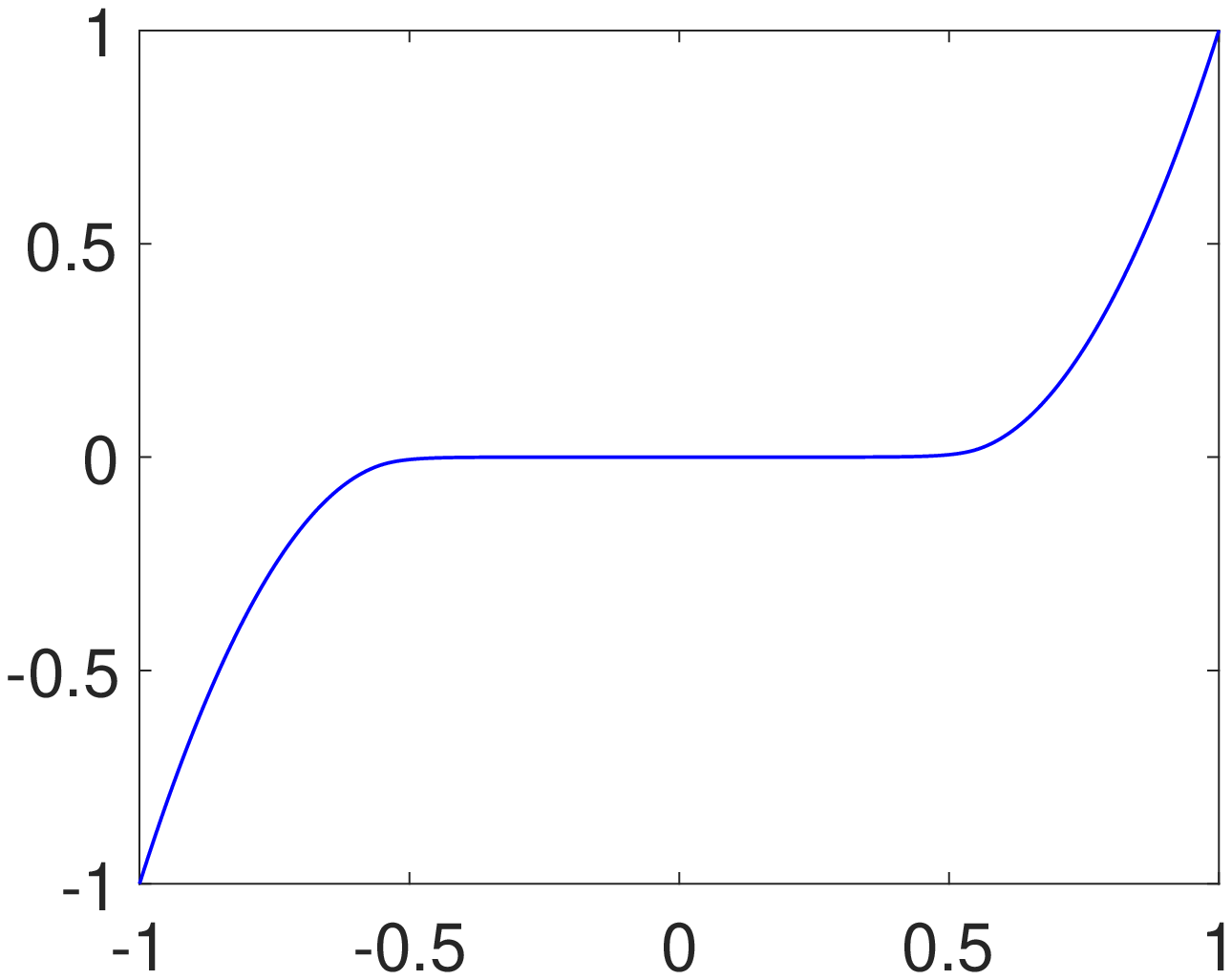} & 
		\includegraphics[width=0.3\textwidth]{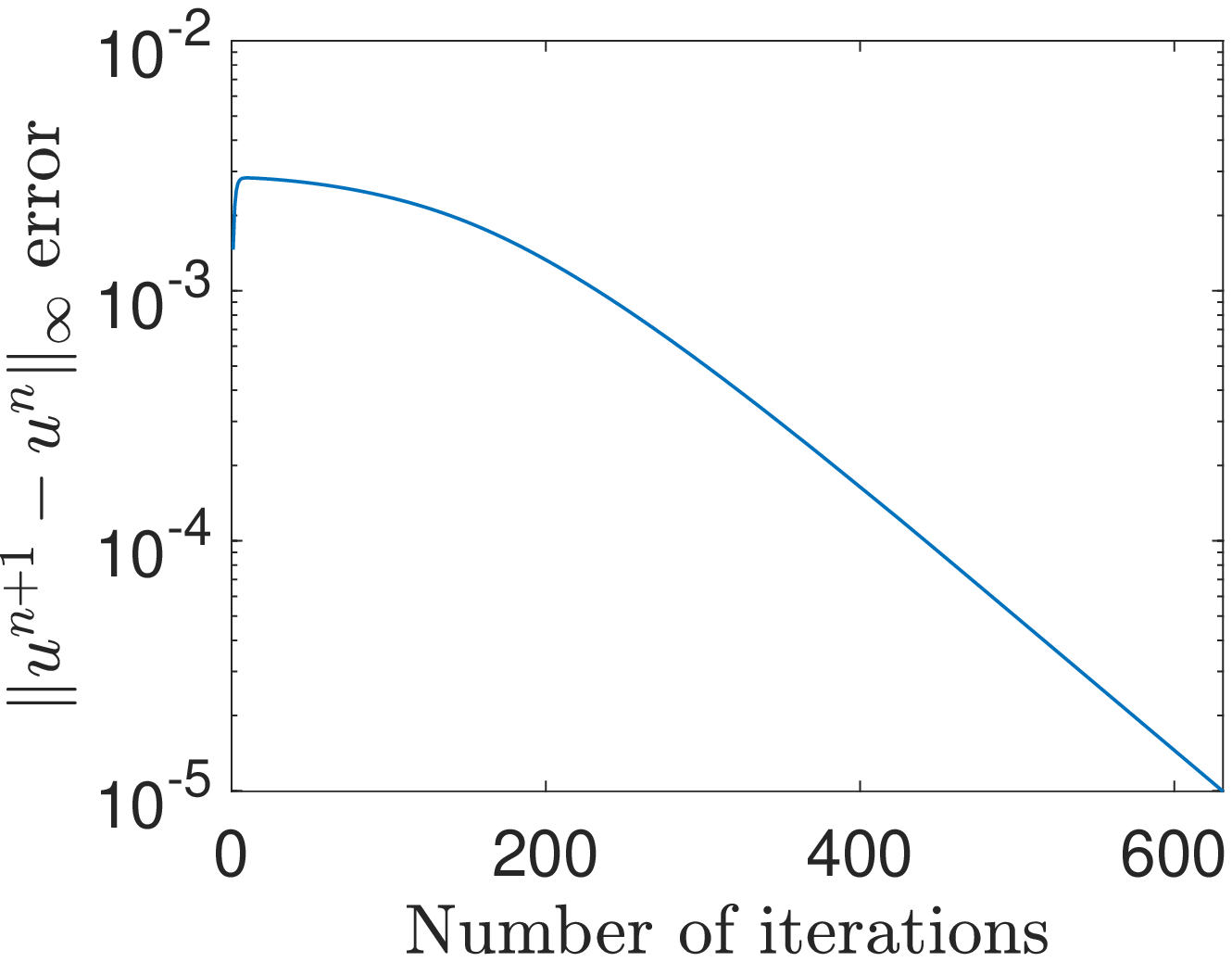}&
		\includegraphics[width=0.3\textwidth]{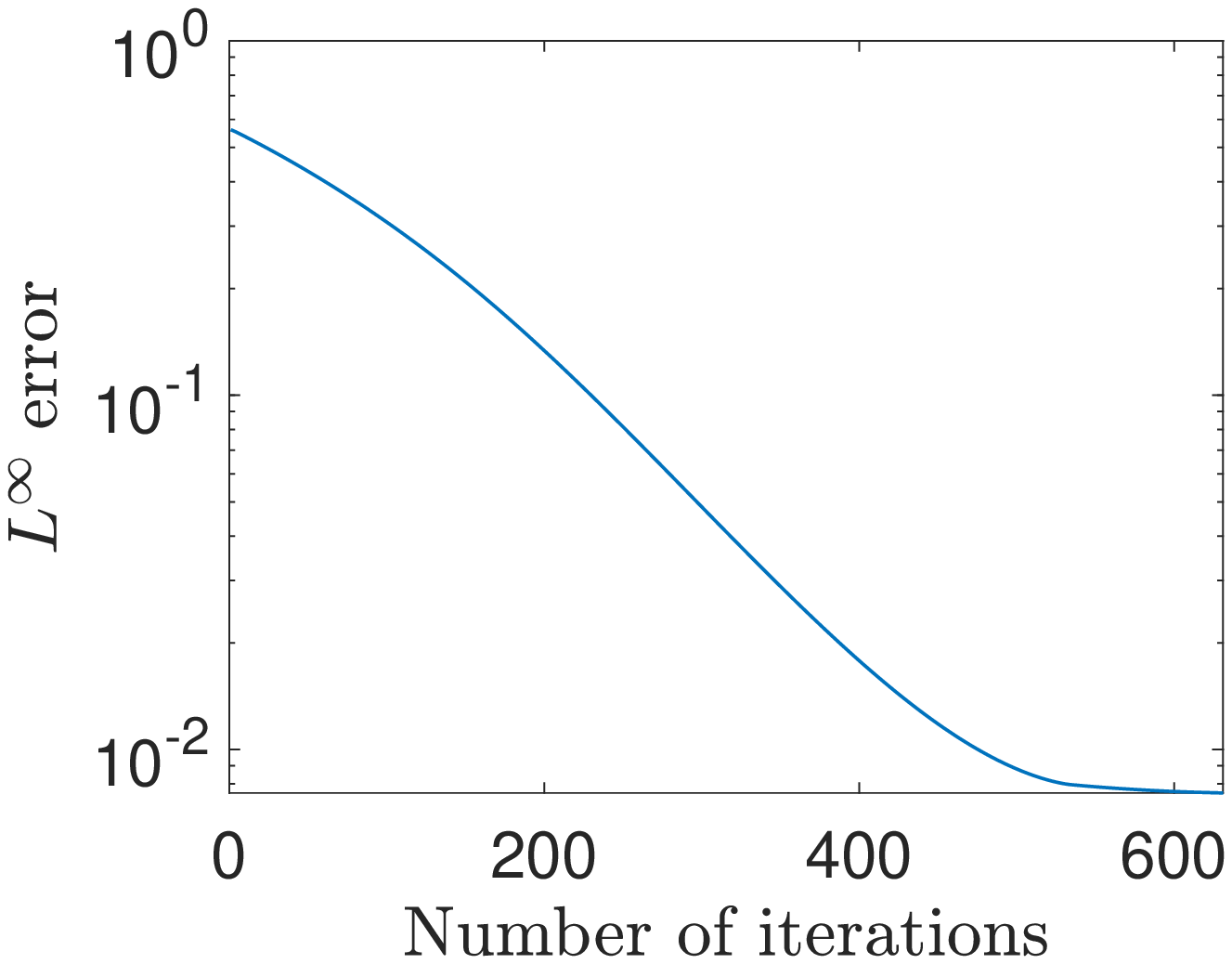}
	\end{tabular}
	\caption{(One-dimensional two-phase membrane problem with $\mu_1=\mu_2=8$.) (a) Graph of the numerical solution by the algorithm described in Section \ref{sec.twophase}. (b) History of the error $\|u^{n+1}-u^n\|_{\infty}$. (c) History of the $L^{\infty}$ error.}
	\label{fig.twophase.1D}
\end{figure}

\begin{figure}[t!]
	\centering
	\begin{tabular}{c}
		\includegraphics[width=0.4\textwidth]{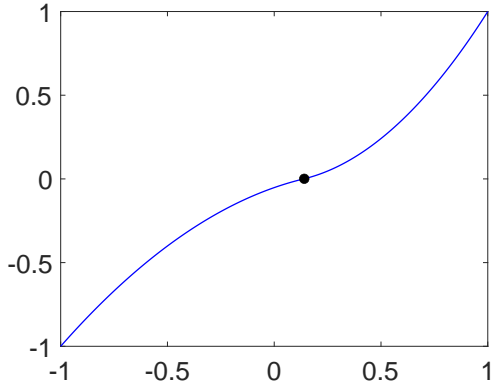} 
	\end{tabular}
	\caption{(One-dimensional two-phase membrane problem with $\mu_1=2,\mu_2=1$.) Graph of the numerical solution by the algorithm described in Section \ref{sec.twophase}. The black dot denotes the free boundary. }
	\label{fig.twophase1.1D}
\end{figure}

\subsection{Two-phase membrane problem}
We test the proposed algorithm discussed in Section \ref{sec.twophase} for the two-phase membrane problem. For all experiments in this subsection, we use the computation domain $[-1,1]$, $\Delta x=2/256, \Delta t=0.1\Delta x$ and $\alpha=500$. We first consider a symmetric case where $\mu_1=\mu_2=8$. For this problem, the exact solution is given as
\begin{align}
	u^*=\begin{cases}
		-4x^2-4x-1 &\mbox{ for } -1\leq x\leq -0.5,\\
		0 &\mbox{ for } -0.5\leq x \leq 0.5,\\
		4x^2-4x+1 &\mbox{ for } 0.5\leq x\leq 1.
	\end{cases}
\end{align}
Our numerical solution is shown in Figure \ref{fig.twophase.1D}(a). Histories of the error $\|u^{n+1}-u^n\|_{\infty}$ and the $L^{\infty}$ error are shown in (b) and (c), respectively. The proposed algorithm converges within 600 iterations. Linear convergence is observed for the solution to converge.

We next consider a non-symmetric setting with $\mu_1=2,\mu_2=1$. The numerical solution is shown in Figure \ref{fig.twophase1.1D}, in which the black dot denotes the calculated free boundary. From our numerical solution, the location of the calculated free boundary is approximately $x=0.141$, which matches the observation in \cite{tran20151}.

\section{Conclusion}\label{sec.conclusion}
We proposed two CADE based operator-splitting methods for the linear and nonlinear obstacle problems. For each problem, we remove the constraint by introducing an indicator function and convert the unconstrained optimization problem to finding the steady state solution of an initial value problem. The initial value problem is then time--discretized by the operator--splitting method so that each subproblem either has an explicit solution or can be solved efficiently. After splitting, one sub-step needs to solve a heat equation with obstacles, for which we proposed the CADE algorithm, a fully explicit algorithm. The proposed methods can be easily extended to solve other free boundary problems, such as the double obstacle problem and the two-phase membrane problem.
The performance of the proposed methods are demonstrated by systematic numerical experiments. Compared to other existing methods, our methods are more efficient while keeping similar accuracy.

\bibliographystyle{abbrv}
\bibliography{ref}

\end{document}